\newtheorem{theorem}{Theorem}[section]
\newtheorem{proposition}[theorem]{Proposition}
\newtheorem{corollary}[theorem]{Corollary}
\newtheorem{lemma}[theorem]{Lemma}
\newtheorem{definition}[theorem]{Definition}
\newtheorem{remark}[theorem]{Remark}
\newtheorem{notation}[theorem]{Notation}
\let\seq\boldsymbol		
\let\lang\mathcal		
\let\classic\mathfrak	
\let\kripke\mathbf		
\let\thy\mathsf			
\let\frm\mathbf			
\let\ax\mathsf			
\let\clsfm\mathsf		
\let\ind\mathrm			
\let\ra\rightarrow
\let\lr\leftrightarrow
\let\Ra\Rightarrow
\let\Lr\Leftrightarrow
\let\all\forall
\let\ex\exists
\def\vdv{\mathrel{\dashv\vdash}}
\def\x{\seq x}
\def\y{\seq y}
\def\z{\seq z}
\def\a{\seq a}
\def\b{\seq b}
\def\cc{\seq c}
\def\t{\seq t}
\def\LL{\lang L}
\def\Lc{\LL_{\ind c}}
\def\km{\kripke K}
\def\kzp{\km_{0+}}
\def\K{\km^*}
\def\kb{\km_\bullet}
\def\kc{\km_\circ}
\let\bk\bullet
\let\ck\circ
\def\M{\classic M_k}
\def\Mp{\classic M_{k'}}
\def\Mc{\classic M_\ck}
\def\nn{\mathbb N}
\def\Rzp{\mathbb R_{0+}}
\def\N{\nn^*}
\def\9{\boldsymbol\8}
\def\Open{\clsfm{Open}}
\def\Ex{\ex_1^+}
\def\Ep{\ex^+}
\def\Eo{\ex_1}
\def\Dz{\clsfm\Delta_0}
\def\Sigz{\clsfm\Sigma_0}
\def\Sig{\clsfm\Sigma_1}
\def\Sign{\clsfm\Sigma_n}
\def\Signp{\clsfm\Sigma_{n+1}}
\def\Piz{\clsfm\Pi_0}
\def\Pio{\clsfm\Pi_1}
\def\Pit{\clsfm\Pi_2}
\def\Pin{\clsfm\Pi_n}
\def\Pinp{\clsfm\Pi_{n+1}}
\def\U{\ax U}
\def\EXP{\ax{EXP}}
\def\MRDP{\ax{MRDP}}
\def\MRDPw{\ax{MRDP}^{\ind w}}
\DeclareFontFamily{U}{MnSymbolC}{}
\DeclareFontShape{U}{MnSymbolC}{m}{n}
{<-6>	MnSymbolC5
<6-7>	MnSymbolC6
<7-8>	MnSymbolC7
<8-9>	MnSymbolC8
<9-10>	MnSymbolC9
<10-12>	MnSymbolC10
<12->	MnSymbolC12}{}
\DeclareSymbolFont{MnSyC}{U}{MnSymbolC}{m}{n}
\DeclareMathSymbol{\dotminus}{\mathbin}{MnSyC}{24}
\newcommand{\sqt}[1]{\boldsymbol{\mathcal #1}}	
\newcommand{\I}[1]{{\thy I}#1}					
\newcommand{\cmt}[1]{#1^{\ind c}}				
\newcommand{\gp}[1]{#1^\ex}						
\newcommand{\sgp}[1]{#1^\all}					
\newcommand{\gps}[1]{#1^+}						
\newcommand{\gng}[1]{#1^-}						
\newcommand{\trm}[2]{#1^{=#2}}					
\newcommand{\dfe}[1]{#1^*}						
\newcommand{\gt}[1]{#1^{\ind g}}				
\newcommand{\dzn}[1]{#1^\neg}					
\newcommand{\trns}[1]{#1^{\ind t}}				
\def\GQC{\thy{GQC}}
\def\BQC{\thy{BQC}}
\def\EBQC{\thy{EBQC}}
\def\IQC{\thy{IQC}}
\def\CQC{\thy{CQC}}
\def\LK{\thy{LK}}
\def\Q{\thy Q}
\def\GA{\thy{GA}}
\def\GAw{\thy{GA^{\ind w}}}
\def\BA{\thy{BA}}
\def\BAc{\thy{BA_{\ind c}}}
\def\BAw{\thy{BA^{\ind w}}}
\def\EBA{\thy{EBA}}
\def\EBD{\thy{EB}\Dz}
\def\HA{\thy{HA}}
\def\PA{\thy{PA}}
\def\PAm{\thy{PA}^-}
\def\Iop{\I\Open}
\def\ID{\I\Dz}
\def\IEx{\I\Ex}
\def\IEp{\I\Ep}
\def\IE{\I\Eo}
\def\ISig{\I\Sig}
\def\ISign{\I\Sign}
\def\ISignp{\I\Signp}
\def\IC{\I\C}
\def\T{\thy T}
\DeclareMathOperator{\pd}{pd}			
\DeclareMathOperator{\range}{range}		
\DeclareMathOperator{\PTF}{PTF}			
\DeclareMathOperator{\PTRF}{PTRF}		
\DeclareMathOperator{\Th}{Th}			
\DeclareMathOperator{\Even}{Even}		
\DeclareMathOperator{\es}{\sqt E}		
\DeclareMathOperator{\us}{\sqt U}		
\DeclareMathOperator{\Prov}{\frm{Prov}}	
\DeclareMathOperator{\Con}{\frm{Con}}	
\def\C{\mathcal C}					
\def\A{\mathcal A}					
\def\D{\mathfrak D}					
\def\PR{\boldsymbol{\mathcal{PR}}}	
\def\PRec{\frm{PR}}					
\def\r{\mathbf q^{\PR}}				
\def\rd{\mathbf q^D}				
\def\B{\frm B_D}					
\def\e{{\frm A}_{\exp}}				
\def\p{{\frm A}_{\pd}}				
\def\co{{\frm A}_{\ind c}}			
\begin{document}
\title{The provably total recursive functions and the MRDP theorem
in Basic Arithmetic and its extensions}
\author[1,*]{Mohammad Ardeshir}
\author[2,3,$\dagger$]{Erfan Khaniki}
\author[1,$\ddagger$]{Mohsen Shahriari}
\affil[1]{Department of Mathematical Sciences\\
Sharif University of Technology}
\affil[2]{Faculty of Mathematics and Physics\\
Charles University}
\affil[3]{Institute of Mathematics\\
Czech Academy of Sciences}
{\makeatletter
\renewcommand\AB@affilsepx{: \protect\Affilfont}
\makeatother
\affil[ ]{Email IDs}
\makeatletter
\renewcommand\AB@affilsepx{, \protect\Affilfont}
\makeatother
\affil[*]{\texttt{mardeshir@sharif.edu}}
\affil[$\dagger$]{\texttt{khaniki@math.cas.cz}}
\affil[$\ddagger$]{\texttt{m.shahriari@sharif.edu}}}
\maketitle

\begin{abstract}
We study Basic Arithmetic, $\BA$ introduced by W. Ruitenburg. $\BA$ is an
arithmetical theory based on basic logic which is weaker than intuitionistic
logic. We show that the class of the provably total recursive functions of
$\BA$ is a \emph{proper} sub-class of the primitive recursive functions. Three
extensions of $\BA$, called $\BA+\U$, $\BAc$ and $\EBA$ are investigated
with relation to their provably total recursive functions. It is shown that
the provably total recursive functions of these three extensions of $\BA$ are
\emph{exactly} the primitive recursive functions. Moreover, among other things,
it is shown that the well-known MRDP theorem does not hold in $\BA$,
$\BA+\U$, $\BAc$, but holds in $\EBA$.
\end{abstract}

\section{Introduction}\label{intro}
Basic Arithmetic, $\BA$ is an arithmetical theory introduced by W. Ruitenburg
in \cite{R98}, based on Basic Predicate Calculus, $\BQC$, as Heyting
Arithmetic, $\HA$ is based on Intuitionistic Predicate Calculus, $\IQC$ and
Peano Arithmetic, $\PA$ based on Classical Predicate Calculus, $\CQC$. 
$\BQC$ is a weaker logic than $\IQC$, in which the rule of \emph{Modus Ponens}
is weakened. It was motivated by a revision of the Brouwer-Heyting-Kolmogorov
interpretation (see \cite{R91} and \cite{R12}). Formally, basic logic is
an extension of the well-known \emph{geometric logic}\footnote {In the
literature, the term ``geometric logic" is usually used for a logic with
infinitary disjunction, and the formulas with only finitary disjunction are
called \emph{coherent} geometric formulas. In this paper, we only work with
finitary languages, and only nullary and binary disjunction ($\bot$ and $\lor$,
respectively) are included in the language, which are sufficient for making up
all the finitary disjunctions. Also, derivability in geometric logic is often
considered in a \emph{context} of a finite sequence of variables, which for
example allows for an empty domain of discourse when considering semantics.
We will use the term ``geometric" only for reference to the finitary part of
the language and logic, and consider derivability \emph{without} variable
contexts.} (see \cite{V}) by adding implication and universal quantification
to the language, in a way that they reflect the meaning of ``derivability"
at the object language level. We will use this close relation between
basic logic and geometric logic to obtain some of our results about $\BA$
and its extensions. We will also briefly discuss some properties of theories
formalized over basic logic that are absent in the context of geometric logic,
including those of the subtheory $\BAw$ of $\BA$.

Although the arithmetical axioms of $\BA$ are essentially the same as Peano
axioms, $\BA$ is weaker than $\HA$. For instance, $\BA$ does not prove the
cancellation law, i.e., $x+y=x+z\Ra y=z$ (See Lemma \ref{3d17}). More
interestingly, every provably total function of $\BA$ is primitive recursive.
This result is not new; it has already been proved using primitive recursive
realizability introduced in \cite{S03}. We use a different analysis based
on \emph{functionality} of the relations given by the defining formulas,
instead of their \emph{totality}. Moreover, we show that every provably total
function of $\BA$ is definable in $\BA$ by a geometric formula.
However, there are primitive recursive functions that are
\emph{not} provably total in $\BA$. One such primitive recursive function is
the cut-off subtraction. That means that the set of all the provably total
recursive functions of $\BA$, indicated by $\PTRF(\BA)$ is a \emph{proper}
subset of all the primitive functions, $\PR$. 

We consider three extensions of $\BA$. One is a \emph{logical} extension and
the other two are \emph{arithmetical} extensions. The logical extension we
study is $\EBA$, an extension of $\BA$ with the \emph{logical} axiom
$\top\ra\bot\Ra\bot$. This extension is introduced in \cite{AH} and it is
shown there that its behavior is very close to $\HA$, but still weaker than
that. We show that $\PTRF(\EBA)=\PR$. The two arithmetical extensions we will
consider are the following. The first one is an extension of $\BA$ by the
\emph{arithmetical} axiom $\U:x+y=x+z\Ra y=z$ (the cancellation law). It turns
out that $\PTRF(\BA+\U)=\PR$.
The other one is an extension of $\BA$ by adding a symbol for the cut-off
subtraction in the language, and adding its properties as extra axioms to
$\BA$. Again, it turns out that the provably total recursive functions of this
extension, denoted by $\BAc$, are exactly the primitive recursive functions. 

It is shown that the well-known Matiyasevich-Robinson-Davis-Putnam, MRDP
theorem does \emph{not} hold in $\BA$ and the two arithmetical extensions of
$\BA$ mentioned above. However, $\EBA$ proves the MRDP theorem. More properties
of $\EBA$ are considered in the last section of this paper. For instance, it is
shown that $\EBA$ proves every $\Pit$ theorem of the classical arithmetical
theory $\ISig$. Moreover, it is shown that $\PA$ is $\Pio$-conservative over
$\EBA$, and in particular, $\EBA\vdash\Con_{\ISign}$, for any $n$.

In the course of our investigation of the provably total functions and
the MRDP theorem for $\BA$ and its extensions, we use geometric logic
and arithmetic as a bridge, to connect our theories of interest to some
classical theories. Most notably, we consider two classical arithmetical
theories $\IEx$ and $\IEx+\U$. It turns out that the class of provably total
recursive functions of both of these theories are exactly the class of
primitive recursive functions, and while the MRDP theorem holds for the
latter theory, it does not hold for the former. We did not find any
indication of these results in the literature, and they are interesting
on their own.

Finally, based on our analysis in this paper, we suggest that $\BA+\U$
is a better candidate for the title ``basic arithmetic" than $\BA$.
The same holds for the theory which we have denoted in this paper by $\GA$
and named ``geometric arithmetic": $\GA+\U$ would be more suitable for
the title.

\section{Preliminaries to Basic Arithmetic}\label{prelim}
In this section, we introduce the logical and arithmetical theories
we will work with, including $\BQC$ and $\BA$. The semantics of
Kripke model theory for $\BQC$, $\BA$ and their extensions is introduced
and its basic properties that are used in this paper are stated. We
assume that the reader is familiar with the semantics of first-order
classical and intuitionistic theories
(See Chapters 3 and 6 of \cite{D}, for example).
For motivations and basic properties of $\BQC$ and $\BA$,
see \cite{R98} and \cite{AH}.

\label{bqc}
\subsection{Axioms and Rules of Basic Predicate Calculus}
The logical vocabulary of \emph{Basic Predicate Calculus}, $\BQC$ is the
same as that of \emph{Intuitionistic Predicate Calculus}, $\IQC$. It was
originally axiomatized in sequent notation, i.e., using pairs of formulas
called \emph{sequents}, denoted by $A\Ra B$ where $A$ and $B$ are formulas
in the logical language $\{\lor, \land, \ra, \bot, \top, \ex, \all\}$
\cite{R98}. Since \emph{Modus Ponens} is \emph{not} a rule in
$\BQC$, a universally quantified formula like $\all x\all
y\,A$ is different from $\all xy\,A$.
The first one is read $ \all x (\top \ra \all y
(\top \ra A)) $ and the second one is read $\all x y (\top \ra A)$. 
In $\BQC$, when we write
$\all\x\,(A\ra B)$, we mean $\x$ to be a finite
sequence of variables \emph{once} quantified. For existential
quantification no such problems occur over $\BQC$. So, as usual over $\IQC$,
we may occasionally write $\ex\x\,A$ as short for $\ex x_1\dots\ex x_n\,A$.
Beside a set of predicate and function symbols of possibly different finite
arity, we also include the binary predicate ``$=$" for \emph{equality}.
\emph{Terms} and a\emph{tomic formulas} are defined as usual. A \emph{prime
formula} is either an atomic formula, $ \top $ or $ \bot $. \emph{Formulas}
are defined as usual except for implication and universal quantification: if
$A$ and $B$ are formulas, and $\x$ is a (possibly empty) finite sequence of
variables, then $\all\x\,(A\ra B)$ is a formula. In case $A$ is not of the
form $B\ra C$, $\forall\x\,A$ will be used as an abbreviation for
$\forall\x\,(\top\ra A)$. An \emph{implication} $A\ra B$ is a
\emph{universal quantification} $\all\x\,(A\ra B)$ where $\x$ is
the empty sequence. $\neg A$ means $A\ra\bot$, and $\all\x\,(A\lr B)$
means $\all\x\,(A\ra B)\land\all\x\,(B\ra A)$. The concepts of \emph{free} and
\emph{bound} variables are defined as usual. A \emph{fresh} variable
is a variable that does not occur in any of the terms and formulas
in the context being discussed. A \emph{sentence} is a formula
with no free variables. Given a sequence of variables $\x$ without repetitions
and a sequence of terms $\t$ with the same length as $\x$, substitutability
of $\t$ for $\x$ in a formula is defined as usual. $s[\x/\t]$ and
$A[\x/\t]$ stand for, respectively, the term and formula that results from
substituting the terms $\t$ for all free occurrences of the variables of $\x$
in the term $s$ and the formula $A$. For details, see \cite{R98} and
\cite{AH}. We often simply write $A$ as an abbreviation for the sequent
$\top\Ra A$, and $A\Lr B$ for $A\Ra B$ and $B\Ra A$ together.

A \emph{rule} $R$ assigns a sequent $\alpha$ to finitely many sequents
$\alpha_1$, $\dots$ and $\alpha_n$, and is denoted by
\AXC{$\alpha_1\quad\dots\quad\alpha_n$}\UIC{$\alpha$}\DP. $\alpha$ is
called the \emph{lower sequent} of $R$, and $\alpha_1$, $\dots$ and
$\alpha_n$ are called the \emph{upper sequents} of $R$. To any
given first-order language, one can add several new propositional
symbols, and define substitution of formulas of the original language
in place of the added propositional symbols appearing in a given
formula in the extended language, as usual. This definition of
substitution can be extended to the sequents and rules, in the
obvious way. A \emph{sequent schema} is a collection of substitution
instances of a certain sequent. A \emph{rule schema} is defined similarly.
A \emph{theory} $\T$ is given by a set of sequents called the \emph{axioms},
and a set of rules. An \emph{axiom schema} is a sequent schema, all of
the instances of which are axioms. We may simply refer to axiom schemas
and rule schemas as axioms and rules, respectively.
In the following subsections, we introduce several theories that
are of interest in this paper, by listing their axioms and rules.
Unless specified otherwise, the formulas, variables,
sequences of variables and sequences of terms appearing in the listed
axioms and rules are arbitrary. A double horizontal line denotes
\emph{reversible} rules; i.e. it denotes several rules together,
where each \emph{converse} rule has one of the upper sequents of
the original rule as the lower sequent, and the lower sequent of
the original rule as the only upper sequent.

A \emph{derivation} $\D$ is a finite rooted tree with a sequent assigned
to each node. We often identify the nodes of a derivation with the sequent
assigned to them. Considering the partial order of the tree as an
above-below relation (so that the root is the lower-most node), the
upper-most nodes are called the \emph{initial nodes} of the derivation, and
the sequents assigned to them are called the \emph{initial sequents}.
For a theory $\T$, a set of sequents $\Gamma$ and a sequent $\alpha$,
A \emph{derivation of $\alpha$ from $\Gamma$ in $\T$} is a derivation with
$\alpha$ as the root, initial sequents either belonging to $\Gamma$ or
the set of axioms of $\T$, and each non-initial sequent the lower sequent
of a rule of $\T$ such that the sequents immediately above it are
the upper sequents of that rule. We say that $\alpha$ is \emph{derivable}
or \emph{provable} in $\T$ from $\Gamma$, or alternatively, $\T$
\emph{proves} $\alpha$ from $\Gamma$, whenever such a derivation exists.
We denote this relation by $\T+\Gamma\vdash\alpha$. In case $\Gamma$
is empty, we drop the ``from $\Gamma$", and simply write $\T\vdash\alpha$.
$\T$ \emph{proves} a rule
\AXC{$\alpha_1\quad\dots\quad\alpha_n$}\LeftLabel{$R=$}\UIC{$\alpha$}\DP,
or $R$ is \emph{derivable} in $\T$, whenever
$\T+\{\alpha_1,\dots,\alpha_n\}\vdash\alpha$. Derivability of a rule
in a theory from a set of sequents is defined similarly.
For theories $\T$ and $\T'$, $\T+\T'$ denotes the theory whose sets of axioms
and rules are the union of axioms of $\T$ and $\T'$ and the union of the
rules of $\T$ and $\T'$, respectively (note that this notation is consistent
with the previous notation for derivability from a set of sequents).
$\T$ is an \emph{extension} of $\T'$, denoted by $\T\vdash\T'$, whenever
the language of $\T$ extends that of $\T'$, and we have
$\T\vdash\alpha$ and $\T'\vdash R$ for all axioms $\alpha$ and all rules
$R$ of $\T'$. $\T\vdv\T'$ means $\T\vdash\T'$ and $\T'\vdash\T$. A
\emph{theorem} of $\T$ is a sentence $A$ such that $\T\vdash A$. $\Th(\T)$
denotes the set of all theorems of $\T$.\footnote{
This notation ``$\Th$" is different from that used in \cite{R98},
which is for sequents and not just sentences.} For a class $\C$ of formulas,
$\Th_\C(\T)$ denotes the set of all \emph{$\C$-theorems} of $\T$,
$\Th(\T)\cap\C$. $\T$ is a \emph{conservative} extension of $\T'$ whenever
$\T\vdash\T'$ and $\Th_\C(\T)=\Th(\T')$, with $\C$ being the class of
all formulas in the language of $\T'$. $\T$ is said to be
\emph{$\C$-conservative over $\T'$} whenever $\T\vdash\T'$ and
$\Th_\C(\T)=\Th_\C(\T')$.

\subsubsection*{Axioms and rules of logical theories}
We first introduce \emph{Geometric Predicate Calculus},
$\GQC$, which is the weakest logical theory we need. $\GQC$
is formalized in a language without implication and universal quatification.
Formulas in such a language are called \emph{geometric} formulas.
A sequent $A\Ra B$ is called \emph{geometric} whenever both $A$ and $B$
are geometric formulas. A rule is called \emph{geometric} whenever
its lower sequent and all its upper sequents are geometric. $\GQC$
is a theory with geometric axioms and rules. In the following list
of axioms and rules of $\GQC$, all the formulas are supposed to be geometric.
\begin{enumerate}
\item $A\Ra A$;
\item $A\Ra\top$;
\item $\bot\Ra A$;
\item $A\land(B\lor C)\Ra (A\land B)\lor (A\land C)$;
\item $A\land\ex x\,B\Ra\ex\,x(A\land B)$;
where $x$ is not free in $A$;
\item $x=x$;
\item $x=y\land A\Ra A[x/y]$,
where $A$ is atomic;
\item \AXC{$A\Ra B$}\AXC{$B\Ra C$}\BIC{$A\Ra C$}\DP;
\item \AXC{$A\Ra B$}\AXC{$A\Ra C$}\doubleLine\BIC{$A\Ra B\land C$}\DP;
\item \AXC{$B\Ra A$}\AXC{$C\Ra A$}\doubleLine\BIC{$B\lor C\Ra A$}\DP;
\item \AXC{$A\Ra B$}\UIC{$A[\x/\t]\Ra B[\x/\t]$}\DP;
where $\t$ is substitutable for $\x$ in both $A$ and $B$;
\item \AXC{$B\Ra A$}\doubleLine\UIC{$\ex x\, B\Ra A$}\DP,
where $x$ is not free in $A$.
\end{enumerate}

\emph{Basic Predicate Calculus}, $\BQC$ extends $\GQC$ first by
including all the instances of the above schemas in the language
containing implication and universal quantification, and second by
adding the following axioms and rules:
\begin{enumerate}
\setcounter{enumi}{12}
\item $\all\x\,(A\ra B)\land\all\x\,(B\ra C)\Ra\all\x\,(A\ra C)$;
\item $\all\x\,(A\ra B)\land\all\x\,(A\ra C)\Ra\all\x\,(A\ra B\land C)$;
\item $\all\x\,(B\ra A)\land\all\x\,(C\ra A)\Ra\all\x\,(B\lor C\ra A)$;
\item $\all\x\,(A\ra B)\Ra\all\x\,(A[\x/\t]\ra B[\x/\t])$,
where $\t$ is substitutable for $\x$ in both $A$ and $B$;
\item $\all\x\,(A\ra B)\Ra\all\y\,(A\ra B)$,
where no variable in $\y$ is free on the left hand side;
\item $\all\y x\,(B\ra A)\Ra\all\y\,(\ex x\, B\ra A)$,
where $x$ is not free in $A$;
\item \AXC{$A\land B\Ra C$}\UIC{$A\Ra\all\x\,(B\ra C)$}\DP,
where no variable in $\x$ is free in $A$.
\end{enumerate}

There are three other logical theories that we are interested in.
\emph{Extended Basic Predicate Calculus}, $\EBQC$ is axiomatized by
adding the axiom $\top\ra\bot\Ra\bot$ to $\BQC$, \emph{Intuitionistic
Predicate Calculus}, $\IQC$ is axiomatized by adding the schema
$\top\ra A\Ra A$ to $\BQC$, and \emph{Classical Predicate Calculus},
$\CQC$ is axiomatized by adding the schema $A\lor\neg A$ to $\IQC$.

1-19 are collectively called the \emph{logical axioms and rules};
1-4 are the \emph{propositional axioms}; 5 and 13-18 are the
\emph{predicate axioms}; 6 and 7 are the \emph{equality axioms};
8-10 are the \emph{propositional rules}; 11, 12 and 19 are the
\emph{predicate rules}.

\subsection{Axioms and Rules of Basic Arithmetic}
\label{AaRoBA}

The arithmetical theories we consider are all formalized in a
first-order language containing non-logical symbols from the set
$\{0,S,+,\cdot\}$, where $0$ is a constant symbol for zero, $S$ is a unary
function symbol for successor, and $+$ and $\cdot$ are binary function
symbols for addition and multiplication, respectively.

The ordering relation can be formalized in arithmetical theories by defining
$s<t$ for any terms $s$ and $t$ to mean $\ex x(s+Sx=t)$, with $x$ being a
fresh variable. $s\le t$ can be defined as $s<t\lor s=t$, and $s>t$ and
$s\ge t$ can be defined in the obvious ways. Alternatively, one can
include ``$<$" as a binary predicate symbol in the language, and
$s<t\Lr\ex x(s+Sx=t)$ as axioms in the theory. For the properties of
the arithmetical theories that we are interested in, these alternatives
are essentially equivalent. For convenience,
we sometimes the latter alternative, and the arithmetical theories
will be considered in the language $\LL=\{0,S,+,\cdot,<\}$
(in the case of $\BAc$, the language $\LL\cup\{\dotminus\}$)
and with the mentioned axioms, without further mentioning of it. While
theories like $\Q$, $\GA$, $\BA$, $\EBA$, $\HA$, $\PA$ and $\IC$
(see the following subsection for definitions) were originally formalized
in the language without the primitive symbol $<$, we denote the theories
in the language $\LL$ by the same names, since each of them is a conservative
extension of the corresponding theory. This can be seen by the fact that if
one substitutes each appearance of a formula of the form $s<t$ in the
sequents of a derivation in the extended theory by a formula of the form
$\ex x(s+Sx=t)$, the result will be a derivation in the original theory.

\subsubsection*{Axioms and rules of arithmetical theories}

The weakest set of arithmetical axioms that we consider
consists of the ones in the following list.
\begin{enumerate}
\setcounter{enumi}{19}
\item $Sx=0\Ra\bot$;
\item $Sx=Sy\Ra x=y$;
\item $x+0=x$;
\item $x+Sy=S(x+y)$;
\item $x\cdot 0=0$;
\item $x\cdot Sy=x\cdot y+x$.
\end{enumerate}
These axioms, together with $x=0\lor\ex y(x=Sy)$, are collectively known
as \emph{Robinson's axioms}. The theory axiomatized by Robinson's axioms
over $\CQC$ is known as \emph{Robinson Arithmetic}, $\Q$. The exclusion
of the last of the Robinson's axioms from the above list is due to the fact
that all the arithmetical theories considered in this paper prove it.
To see why, note that it is derivable by geometric logic and the instance
of the rule of induction with the induction formula
$x=0\lor\ex y(y<x\land x=Sy)$ (see below for the definitions).
All the arithmetical theories we consider extend $\GQC$ and
have the mentioned instance of the rule of induction.

\emph{Basic Arithmetic}, $\BA$ is formalized over $\BQC$ by the axioms
in the above list together with the following axiom and rule, respectively
called the \emph{induction axiom} and the \emph{rule of induction}.
\begin{enumerate}
\setcounter{enumi}{25}
\item $\all\y x\,(A\ra A[x/Sx])\Ra\all\y x\,(A[x/0]\ra A)$,
\item \AXC{$A\Ra A[x/Sx]$}\UIC{$A[x/0]\Ra A$}\DP.
\end{enumerate}

\emph{Weakened Basic Arithmetic}, $\BAw$ is the theory obtained by
dropping the rule of induction from $\BA$. Any extension $\T$ of $\BAw$
is in fact closed under the rule
\AXC{$A\Ra A[x/Sx]$}\UIC{$A[x/0]\Ra\all x(\top\ra A)$}\DP.
To see why, assume that $\T\vdash A\Ra A[x/Sx]$. This implies
$\T\vdash\all x(A\ra A[x/Sx])$, which combining with the induction axiom
gives $\T\vdash\all x(A[x/0]\ra A)$. As
$\T\vdash A[x/0]\Ra\all x(\top\ra A[x/0])$, we get
$\T\vdash A[x/0]\Ra\all x(\top\ra A)$.

\emph{Extended Basic Arithmetic}, $\EBA$ is formalized by the same
arithmetical axioms and rules as $\BA$ over $\EBQC$.
\emph{Heyting Arithmetic}, $\HA$ and \emph{Peano Arithmetic}, $\PA$
are respectively formalized over $\IQC$ and $\CQC$
by the same arithmetical axioms, but without the rule of
induction. Note that by the above discussion about $\BAw$, any theory
extending $\HA$ is automatically closed under the rule of induction,
since $\HA\vdash\all x(\top\ra A[x/0])\Ra A[x/0]$.

The formula $A$ appearing in the induction axiom and the rule of induction
is called the \emph{induction formula}, and the variable $x$ in it is
called the \emph{eigenvariable}. The induction formula can be restricted
to belong to a certain class $\C$ of formulas, resulting in the
\emph{$\C$-induction axiom} and the \emph{rule of $\C$-induction}.
We will consider several theories with such restricted inductions.
The first one is \emph{Geometric Arithmetic}, $\GA$ which is axiomatized
(in the language restricted to geometric formulas) over $\GQC$ by the
axioms 20-25 and the rule of \emph{geometric induction};
i.e. the rule of induction restricted to geometric induction formulas.
Before introducing the other theories, we first need to
define several classes of formulas.

\begin{definition}
\label{1d6}
$ $
\begin{itemize}
\item $\Dz$ is the smallest set of formulas such that:
\begin{itemize}
\item If $A(\x)$ is prime, then $A(\x)\in\Dz$,
\item If $A(\x),B(\x)\in\Dz$, then $A(\x)\circ B(\x)\in\Dz$,
where $\circ$ is $\land$, $\lor$ or $\ra$,
\item If $A(x,\y)\in \Dz$ and $s$ is a term in which
$x$ does not occur, then $\ex x(x<s \land A(x,\y))\in\Dz$,
\item If $A(x,\y)\in \Dz$ and $s$ is a term in which
$x$ does not occur, then $\all x(x<s \ra A(x,\y))\in\Dz$.
\end{itemize}
\item For any natural number $n$, $\Sign$ and $\Pin$ are
defined as follows:
\begin{itemize}
\item $\Sigz=\Piz=\Dz$,
\item $\Signp$ is the smallest set of formulas such that
if $A(\x,\y)\in\Pin$ then $\ex\x\,A(\x,\y)\in\Signp$,
\item $\Pinp$ is the smallest set of formulas such that
if $A(\x,\y)\in\Sign$ then $\all\x\,A(\x,\y)\in\Pinp$.
\end{itemize}
\item $\Open$ is the set of formulas whith no existential quantification
and no universal quatification over nonepmty sequence of variables (i.e.
it may contain implications). A formula in $\Open$ may be called
\emph{quantifier-free} or \emph{open}.
\item $\Eo$ is the set of formulas of the form $\ex\x\,A(\x,\y)$,
where $A(\x,\y)$ is a quantifier-free formula. A formula in $\Eo$ is
sometimes called \emph{existential} or \emph{Diophantine}.
\item $\Ex$ is the set of formulas of the form $\ex\x\,A(\x,\y)$,
where $A(\x,\y)$ is a geometric quantifier-free formula; meaning that
$A(\x,\y)$ does not contain implications. A formula in $\Ex$ is
sometimes called \emph{positive existential}, hence the notations
$\Ex$ and $\Ep$ are used for the class of formulas.\footnote
{Every $\Ex$ formula is geometric, and every gemetric formula is
equivalent to a $\Ex$ formula over geometric logic, which is
the weakest logic we are considering. We could simply skip this
definition, and work only with geometric formulas instead.
Our purpose of the definition is only to use the notation that already
has been used in the literature.}
\end{itemize}
\end{definition}

$\EBD$ is defined to be the theory axiomatized over $\EBQC$ by the
axioms 20-25, the $\Dz$-induction axiom and the rule of $\Dz$-induction.

For a class $\C$ of formulas, by $\IC$ we
mean the \emph{classical} arithmetical theory $\Q$ together with
induction restricted to $\C$ formulas. I.e. $\IC$ is axiomatized
over $\CQC$ by the axioms 20-25 and the $\C$-induction axiom.
Note that we can argue similar to what we did for $\BAw$ to
prove that in fact $\IC$ is closed under the rule of $\C$-induction.

Some authors prefer to define $\IC$ based on another theory $\PAm$
instead of $\Q$. $\PAm$ is the \emph{classical} theory of
\emph{nonnegative parts of discretely ordered rings},
which is formulated in the language of ordered rings,
i.e. $\{0,1,+,\cdot,<\}$. By letting $1\equiv S0$, $\PAm$
becomes an arithmetical theory. Conversely, one can
consider arithmetical theories in this language by letting
$St\equiv t+1$. $\PAm$ can be axiomatized over $\CQC$ by the following
list of axioms, taken from Section 2.1 of \cite{K91}.
\begin{enumerate}
\setcounter{enumi}{27}
\item $(x+y)+z=x+(y+z)$;
\item $x+y=y+x$;
\item $(x\cdot y)\cdot z=x\cdot(y\cdot z)$;
\item $x\cdot y=y\cdot x$;
\item $x\cdot(y+z)=x\cdot y+x\cdot z$;
\item $x+0=x$;
\item $x\cdot 0=0$;
\item $x\cdot 1=x$;
\item $x<y\land y<z\Ra x<z$;
\item $x<x\Ra\bot$;
\item $x<y\lor x=y\lor y<x$;
\item $x<y\Ra x+z<y+z$;
\item $0<z\land x<y\Ra x\cdot z=y\cdot z$;
\item $x<y\Ra\ex z\,(x+z=y)$;
\item $0<1$;
\item $x>0\Ra x\ge 1$;
\item $x\ge0$.
\end{enumerate}
There are other axiomatizations of $\PAm$, for example the list
appearing in Theorem 1.10 of \cite{HP} or in the
beginning of \S2 of \cite{GD}. For a classical theory, these
axiomatizations are equivalent. It is rather straightforward to see
that $\PAm\vdash\Q$. Since we have $\Iop\vdash\PAm$ (see Theorem 1.10 in
\cite{HP}), there is no difference in defining $\IC$ over either of
$\Q$ and $\PAm$, when $\C\supseteq\Open$. Our choice of $\Q$ over $\PAm$
is due to relations between $\BA$ and classical arithmetical theories
with induction restricted to geometric formulas (see Corollary \ref{2c6.1}).
Note that by Lemmas 2.5 and 2.8 and Proposition 2.6 in \cite{AH},
$\BA$ proves all the axioms 28-44 except 37.

We consider adding another axiom $\U$ to theories that were discussed in the previous
paragraph, to get around the mentioned problems. $\U$ is defined to be
\begin{enumerate}
\setcounter{enumi}{44}
\item $x+y=x+z\Ra y=z$,
\end{enumerate}
which just expresses the cancellation law for addition.
This is in fact equivalent to axiom 37 and some other candidates
(see Lemma \ref{U-equiv}). The choice of $\U$ over axiom 37
is due to the fact that it is formulated in the language without a symbol
for the ordering relation. We chose $\U$ over the remaining candidates
because of its relation to provable totality of the cut-off subtraction,
which is a recurring subject in this paper. We chose the name ``$\U$"
because of the relation between the cancellation law for addition and the
uniqueness sequent for cut-off subtraction, later denoted by ``$\us(\co)$".

Finally, we consider the theory $\BAc$ in the language
$\Lc=\LL\cup\{\dotminus\}$, where $\dotminus$ is a binary function
symbol for \emph{truncated subtraction} or \emph{cut-off subtraction},
which is the function with the following definition:
$$x\dotminus y=\begin{cases}0&x<y\text;\\x-y&x\ge y\text.\end{cases}$$
$\BAc$ is the theory in the language $\Lc$ axiomatized over $\BQC$ by
the axioms 20-25, the induction axiom and the rule of induction, together
with the following two axioms:
\begin{enumerate}
\setcounter{enumi}{45}
\item $x\le y\Ra x\dotminus y=0$;
\item $y\le x\Ra Sx\dotminus y=S(x\dotminus y)$.
\end{enumerate}

\subsection{Kripke semantics of Basic Logic and Arithmetic}
\label{semantics}

We introduce Kripke semantics for logical and arithmetical theories.
For simplicity of notation, we may not distinguish between a closed
term and the object denoted by it in the model. In particular,
we may identify a natural number $n$ with the \emph{numeral}
$S\dots S0$ consisting of $n$ consecutive appearances of $S$.
Similarly, we may identify a constant symbol denoting an individual in
the domain of discourse with the individual itself, when working in
a language extended with parameters from a given model.

A \emph{Kripke model} for Basic predicate logic is a quadruple
$\km=(K,\prec,D,\Vdash)$, like the one for intuitionistic predicate logic,
except that the relation $\prec$ is \emph{not} needed to be reflexive.
I.e. $\prec$ is a transitive relation on $K$, $D(k)$ is nonempty for
each $k\in K$, $D(k)$ is embedded in $D(k')$ when $k\prec k'$, and if
$k\prec k'$ then $k\Vdash A$ implies $k'\Vdash A$ for an atomic sentence $A$
in the language extended by parameters from $D(k)$. See \cite{AH} and
\cite{R98} for more details. We write $\preceq$ for reflexife closure of
$\prec$, and $\succ$ and $\succeq$ for the converse relations of $\prec$ and
$\preceq$, respectively. The \emph{forcing relation} $\Vdash$ between a
\emph{node} $k\in K$ and a sentence of the form $\all\x\,(A\ra B)$
(in the language extended by parameters from $D(k)$) is defined as
\begin{quote}
$k\Vdash\all\x\,(A\ra B)$ if and only if for all $k'\succ k$
and all $\a\in D(k')$,\\
$k'\Vdash A[\x/\a]$ implies $k'\Vdash B[\x/\a]$.
\end{quote}
This for implication reduces to
\begin{quote}
$k\Vdash A\ra B$ if and only if for all $k'\succ k$, $k'\Vdash A$
implies $k'\Vdash B$.
\end{quote}
For a formula $A$ with free variables $\x$, we define
\begin{quote}
$k\Vdash A$ if and only if for all $k'\succeq k$ and all $\a\in D(k')$,
$k'\Vdash A[\x/\a]$.
\end{quote}
We extend $\Vdash$ to all sequents and rules. For a sequent $A\Ra B$,
it is defined by
\begin{quote}
$k\Vdash A\Ra B$ if and only if for all $k'\succeq k$ and all $\a\in D(k')$,
$k'\Vdash A[\x/\a]$ implies $k'\Vdash B[\x/\a]$.
\end{quote}
For a rule
\AXC{$\alpha_1\quad\dots\quad\alpha_n$}\LeftLabel{$R=$}\UIC{$\alpha$}\DP,
it is defined by
\begin{quote}
$k\Vdash R$ if and only if for all $k'\succeq k$,
if $k'\Vdash \alpha_i$ for all $i$ with $1\le i\le n$, then $k'\Vdash\alpha$.
\end{quote}

A formula $A$ is \emph{valid} in $\km$, denoted by $\km\Vdash A$,
if $A$ is \emph{forced} in all nodes of $\km$,
i.e. $k\Vdash A$ for all $k\in K$. $\km\Vdash\alpha$ and $\km\Vdash R$
are defined similarly for a sequent $\alpha$ and a rule $R$.
For a set $\Gamma$ of sequents, $\km\Vdash\Gamma$ means that every
sequent in $\Gamma$ is valid in $\km$. For a theory $\T$, $\km\Vdash\T$
means that all the axioms and rules of $\T$ are valid in $\km$. By
$\T+\Gamma\Vdash\alpha$ we mean that for all Kripke models $\km$,
if $\km\Vdash\T$ and $\km\Vdash\Gamma$ then $\km\Vdash\alpha$; $\T+\Gamma$
is said to \emph{force} $\alpha$.

\begin{proposition}[Soundness]
\label{soundness}
Let $\T$ be a theory, $\Gamma$ a set of sequents, $\alpha$ a sequent and
$R$ a rule. If $T+\Gamma\vdash\alpha$ then $T+\Gamma\Vdash\alpha$. If
$T+\Gamma\vdash R$ then $T+\Gamma\Vdash R$.
\end{proposition}
\begin{proof}
Proposition 5.3 in \cite{R98}.
\end{proof}

A theory $\T$ is \emph{functional} whenever for all sequences of formulas
$A_0,B_0,A_1,B_1,\dots,A_n,B_n$ and sentences $A$, if
$$\T+\{A_i\Ra B_i\}_{i=1}^n\vdash A_0\Ra B_0\text,$$
then
$$\T+\{A\land A_i\Ra B_i\}_{i=1}^n\vdash A\land A_0\Ra B_0\text.$$
$\T$ is \emph{well-formed} whenever for all sequences of sentences
$\all\x\,(A_0\ra B_0),\dots,\all\x\,(A_n\ra B_n)$
and all formulas $A$ where no free variable of $A$ occurs in $\x$, if
$$\T+\{A_i\Ra B_i\}_{i=1}^n\vdash A_0\Ra B_0\text,$$
then
$$\T\vdash\bigwedge_{i=1}^n\all\x\,(A\land A_i\ra B_i)
\Ra\all\x\,(A\land A_0\ra B_0)\text.$$

\begin{proposition}[Completeness]
\label{completeness}
Let $\T$ be a functional well-formed theory extending $\BQC$,
$\Gamma$ a set of sequents, $\alpha$ a sequent and $R$ a rule.
If $T+\Gamma\Vdash\alpha$ then $T+\Gamma\vdash\alpha$.
If $T+\Gamma\Vdash R$ then $T+\Gamma\vdash R$.
\end{proposition}
\begin{proof}
Theorem 5.8 in \cite{R98}.
\end{proof}

\begin{proposition}
\label{functional-well-formed}
$ $
\begin{enumerate}
\item
$\BQC$, $\EBQC$, $\IQC$ and $\CQC$ are functional and well-formed.
\item
$\BAw$, $\BA$, $\BA+\U$, $\EBA$, $\HA$ and $\PA$
are functional and well-formed. For any class $\C$
of formulas, $\IC$ and $\IC+\U$ are functional and well-formed.
\item
$\BAc$ and $\EBD$ are functional and well-formed.
\end{enumerate}
\end{proposition}
\begin{proof}
$ $
\begin{enumerate}
\item
Corollaries 4.10 and 4.14 in \cite{R98}.
\item
The cases of $\BAw$, $\HA$, $\PA$, $\IC$ and $\IC+\U$ are consequences of
Corollaries 4.10 and 4.14 in \cite{R98}. The other cases follow from Proposition 6.1 in \cite{R98}.
\item
The proof is similar to that of the previous item. By Propositions 4.9
and 4.13 in \cite{R98}, it suffices to prove that each theory has a
functional and well-formed axiomatization. The only arithmetical rules
of each theory are instances of the induction rule. In case of
$\BAc$, \AXC{$A\land B(x)\Ra B(Sx)$}\UIC{$A\land B(0)\Ra B(x)$}\DP and
$\all\y(A\land B(x)\ra B(Sx))\Ra\all\y(A\land B(0)\ra B(x))$ are derivable by
applying the induction rule and axiom, respectively, with $A\land B(x)$
as the induction formula. The case of $\EBD$ will be
covered by Lemma \ref{EBD-func-wll}.\qedhere
\end{enumerate}
\end{proof}

A Kripke model $\km=(K,\prec,D,\Vdash)$ is called \emph{rooted}
whenever there is a node $k\in K$, called the \emph{root}, such that
$k\preceq k'$, for all $k'\in K$. $\km$ is called \emph{reflexive}
(respectively, \emph{irreflexive}) whenever $\prec$ is reflexive
(respectively, irreflexive). A node $k\in K$ is called \emph{reflexive}
(respectively, \emph{irreflexive}) whenever $k\prec k$ (respectively,
$k\nprec k$). $k$ is called \emph{terminal} whenever for all $k'\in K$,
if $k\prec k'$ then $k=k'$.

A theory $\T$ is \emph{sound} with respect to a class of Kripke models
whenever all of the axioms and rules derivable in it are valid in all models
in the class. $\T$ is \emph{complete} with respect to a class of Kripke models
whenever any sequent or rule that is valid in all of the models in the class,
is provable in $\T$.

\begin{proposition}
\label{scl}
$ $
\begin{enumerate}
\item
$\BQC$ is sound and complete with respect to
the class of all Kripke models.
\item
$\EBQC$ is sound and complete with respect to
the class of all Kripke models in which
every terminal node is reflexive.
\item
$\IQC$ is sound and complete with respect to
the class of all reflexive Kripke models.
\item
$\CQC$ is sound and complete with respect to
the class of all single-node reflexive Kripke models.
\end{enumerate}
\end{proposition}
\begin{proof}
$ $
\begin{enumerate}
\item
Consider Proposition \ref{completeness} for $\T=\BQC$.
\item
Theorem 3.7 in \cite{AH}.
\item
This is the soundness and completeness theorem
of the usual Kripke semantics for intuitionistic logic.
\item
The forcing relation in a single-node reflexive Kripke model
is equivalent to the satisfaction relation in the corresponding
classical model. Thus, this is equivalent to soundness and
completeness theorem of the usual classical semantics.\qedhere
\end{enumerate}
\end{proof}

\begin{proposition}
\label{sca}
$ $
\begin{enumerate}
\item
$\BAw$, $\BA$, $\BA+\U$ and $\BAc$ are sound and complete with respect to
the class of all Kripke models which force their corresponding
arithmetical axioms and rules.
\item
$\EBA$ and $\EBD$ are sound and complete with respect to the
class of all Kripke models with reflexive terminal nodes which force
their corresponding arithmetical axioms and rules.
\item
$\HA$ is sound and complete with respect to the class of all reflexive
Kripke models which force its corresponding arithmetical axioms.
\item
$\PA$, $\IC$ and $\IC+\U$ (for any class $\C$ of formulas) are
sound and complete with respect to the class of all single-node reflexive
Kripke models which force their corresponding arithmetical axioms.
\end{enumerate}
\end{proposition}
\begin{proof}
Combine Propositions \ref{soundness}, \ref{completeness},
\ref{functional-well-formed} and \ref{scl}.
\end{proof}

Consider a theory $\T$ in any first-order language including at least
one constant symbol, so that the set of closed terms is nonempty.
Let $D_\T$ be the quotient set of the set of closed terms by the
equivalence relation of \emph{provable equality in $\T$}; i.e.
the relation defined with $\T\vdash s=t$ for any closed terms
$s$ and $t$. For each set of models $\{\km_i\}_{i\in I}$ of $\T$
we can construct two new models, denoted by $\kb$ and $\kc$, as follows.
Both models are formed by taking the disjoint union of the models
$\{\km_i\}_{i\in I}$ and then adding a new root. In $\kb$ the new root
$\bk$ is reflexive with $D(\bk)=D_\T$, and in $\kc$ the new root $\ck$
is irreflexive with $D(\ck)=D_\T$. In case we are working in the languages
$\LL$ and $\Lc$ of arithmetic and $\T$ is any of the theories
defined above, any closed term is provably equal in $\T$ to a numeral,
and instead of $D_\T$ we can simply take $D(\bk)$ and $D(\ck)$
to be equal to $\nn$, the set of standard natural numbers.
We call $\T$ a \emph{reflexively rooted} theory if for each set
$\{\km_i\}_{i\in I}$ of rooted Kripke models of $\T$, the model
$\kb$ is also a model of $\T$. $\T$ is called
\emph{irreflexively rooted} if $\kc$ is a model of $\T$, and
\emph{fully rooted} if both $\kb$ and $\kc$ are models of $\T$.

\begin{proposition}
\label{rooted}
$ $
\begin{enumerate}
\item
Over a language contaning at least one constant symbol,
$\BQC$ and $\EBQC$ are fully rooted, and $\IQC$ is reflexively rooted.
\item
$\BA$, $\BA+\U$, $\BAc$, $\EBA$ and $\EBD$ are fully rooted.
$\HA$ is reflexively rooted.
\end{enumerate}
\end{proposition}
\begin{proof}
$ $
\begin{enumerate}
\item
The reflexive rootedness of $\IQC$ is well known. The full rootedness of
$\BQC$ follows from item 1 of \ref{scl}. The full rootedness of $\EBQC$
follows from the fact that $\top\Ra\bot\Ra\bot$ is valid in a Kripke model
iff all terminal nodes in the model are reflexive. Consider
$\kb$ and $\kc$ built from the set $\{\km_i\}_{i\in I}$ of models of
$\EBQC$. Since the new root in $\kb$ and $\kc$ is not terminal,
all terminal nodes in $\kb$ and $\kc$ are in some $\km_i$, and
therefore reflexive. By full rootedness of $\BQC$, it follows that
both $\kb$ and $\kc$ are models of $\EBQC$.
\item
For $\BA$ and $\HA$, see Proposition 6.2 in \cite{R98}. This will
immediately imply the statements for $\BA+\U$ and $\EBA$, using
the previous item. The proofs for $\BAc$ and $\EBD$ are identical
to those of $\BA$ and $\EBA$, only focusing on their corresponding
induction formulas in their corresponding languages.\qedhere
\end{enumerate}
\end{proof}

A theory $\T$ is \emph{faithful} whenever for all sequences of sentences
$\all\x\,(A_0\ra B_0),\dots,\all\x\,(A_n\ra B_n)$, if
$$\T\vdash\bigwedge_{i=1}^n\all\x\,(A_i\ra B_i)
\Ra\all\x\,(A_0\ra B_0)\text,$$
then
$$\T+\{A_i\Ra B_i\}_{i=1}^n\vdash A_0\Ra B_0\text.$$
$\T$ has the \emph{disjunction property}, if $\T\vdash A
\lor B$ implies $\T\vdash A$ or $\T\vdash B$, for all sentences
$A$ and $B$. $\T$ has the \emph{existence property},
if $\T\vdash\ex xA$ implies $\T\vdash A[x/t]$ for some
closed term $t$ in $\LL$, for all sentences $\ex xA$.

\begin{proposition}
\label{fwr-dep}
A functional well-formed theory which is either
reflexively rooted or irreflexively rooted is faithful,
and has the disjunction and existence properties.
\end{proposition}
\begin{proof}
Proposition 5.14 in \cite{R98}.
\end{proof}

\begin{corollary}
\label{bafaith}
$\BQC$, $\EBQC$, $\IQC$, $\BA$, $\BA+\U$, $\BAc$, $\EBA$ and $\EBD$
are faithful, and have the disjunction and existence properties.
\end{corollary}
\begin{proof}
Combine Propositions \ref{functional-well-formed},
\ref{rooted} and \ref{fwr-dep}.
For the case of logical theories over languages
without constant symbols, extra care is needed.
See Propositions 5.14 and 4.12 in \cite{R98}.
\end{proof}

A useful way of looking at a Kripke model $\km=(K,\prec,D,\Vdash)$ is by
considering it as a frame such that at any node $k\in K$, a classical
model $\M$ is attached. The domain of discourse of $\M$ is $D(k)$,
and the positive diagram of $\M$ consists of the atomic formulas that
are forced at $k$. If $k\prec k'$, then $\M$ is a substructure of $\Mp$.

For a theory $\T$, a Kripke model $\km$ is said to be \emph{$\T$-normal}
if for every node $k$ of $\km$, $\M\models\T$.

\begin{corollary}
\label{geomba}
Let $\T$ be given by all the geometric sentences true in
the standard model $\nn$. Then $\BA$ is $\T$-normal.
Consequently, for any geometric sentence $A$, if $\nn\models A$
then $\BA\vdash A$. Hence $\BA$, $\EBA$, $\HA$, and $\PA$
prove the same geometric sentences.
\end{corollary}
\begin{proof}
Let $A$ be a geometric sentence and $\nn\models A$. Let $\km$ be
an arbitrary model of $\BA$. The model $\kc$ formed by $\{\km\}$
is also a model of $\BA$, by Propositions \ref{rooted}.
$\Mc\models A$, as $\Mc=\nn$, and $\M\models A$ for any
node $k$ of $\km$, as $A$ is geometric and $\Mc$ is
a substructure of $\M$. So $k\Vdash A$ for any node $k$
of $\km$, because $A$ is geometric, and for geometric $A$
we have $k\Vdash A$ iff $\M\models A$. Therefore $\km\Vdash A$,
and as $\km$ was an arbitrary model of $\BA$, $\BA\Vdash A$.
Hence $\BA\vdash A$, by Proposition \ref{sca}.
\end{proof}

\section{The Provably Total Recursive Functions of $\BA$ and its extensions}
In this section, we investigate the classification of the provably total
recursive functions of $\BA$ and its three extensions. In the first subsection,
we show that the set of all the provably total recursive functions of $\BA$,
indicated by $\PTRF(\BA)$ is a \emph{proper} subset of all the primitive
recursive functions, $\PR$. Half of this result, i.e.
$\PTRF(\BA)\subseteq \PR$, has already been proved in \cite{S03},
and we give an alternative proof in this paper. The other half of our result,
i.e. $\PTRF(\BA)\ne\PR$, is a consequence of our 
Theorem \ref{3t20} (Remark \ref{rm1}).

In the second subsection, we consider three extensions of $\BA$. As is
explained in Section 1, one of these extensions is a \emph{logical} extension
and the other two are \emph{arithmetical} extensions. It turns out that the
provably total recursive functions of all these three extensions of $\BA$ are
\emph{exactly} the primitive recursive functions.

\subsection{About the Provably Total Recursive Functions of $\BA$}
\begin{definition}
\label{2d1}
For a formula $A$, the \emph{geometric part} of $A$ is denoted
by $\gp A$ and is defined recursively as follows:
\begin{itemize}
\item $\gp A\equiv A$ if $A$ is prime,
\item $\gp A\equiv\gp B\circ\gp C$ if $A$ is of the form
$B\circ C$ and $\circ$ is $\lor$ or $\land$,
\item $\gp A\equiv\ex u\,\gp B$ if $A$ is of the form $\ex u\,B$,
\item $\gp A\equiv\top$ if $A$ is of the form $\all\x\,(B\ra C)$.
\end{itemize}
$\gp{(A\Ra B)}$ is defined as $\gp A\Ra\gp B$. For a set
of sequents $\Gamma$, $\gp\Gamma$ is defined as the set
$\left\{\gp\alpha\bigm|\alpha\in\Gamma\right\}$.
For a rule $R$, $\gp R$ is defined as the rule obtained by
replacing any of the upper and lower sequents with its geometric part.
For a theory $\T$, $\gp\T$ is the theory formalized by all
geometric sequents and rules derivable in $\T$; i.e. axioms of
$\gp\T$ are the geometric sequents $\alpha$ such that $\T\vdash\alpha$,
and rules of $\gp\T$ are the geometric rules such that $\T\vdash R$.
\end{definition}
\begin{proposition}
\label{2p2}
$ $
\begin{enumerate}
\item
For any formula $A$, $\gp A$ is geometric.
\item
If $A$ is geometric, $\gp A=A$.
\item
For any formula $A$, $\BQC\vdash A\Ra\gp A$.
\end{enumerate}
\end{proposition}
\begin{proof}
Straightforward by induction on the complexity of $A$.
\end{proof}
\begin{proposition}
\label{2p5}
$ $
\begin{enumerate}
\item
If $\BQC+\Gamma\vdash\alpha$ then $\GQC+\gp\Gamma\vdash\gp\alpha$.
Consequently, $\gp\BQC\vdv\GQC$, and $\BQC$ is conservative over $\GQC$.
\item
If $\BA+\Gamma\vdash\alpha$ then
$\GA+\gp\Gamma\vdash\gp\alpha$.
Consequently, $\gp\BA\vdv\GA$, and $\BA$ is conservative over $\GA$.
\end{enumerate}
\end{proposition}
\begin{proof}
$ $
\begin{enumerate}
\item
This is a generalization of Proposition 4.2 in \cite{R98}, and the proof
goes along the similar lines. For any axiom $\alpha$ and any rule $R$ of
$\BQC$ in the items 1-12 of the list of the axioms and rules, $\gp\alpha$
and $\gp R$ are geometric instances of the same axiom and rule, respectively,
and therefore they are axioms and rules of $\GA$. For an axiom $\alpha$ in
the items 13-18 of the list, $\gp\alpha=\top\Ra\top$, and therefore an
axiom of $\GQC$. For rule 19, the lower sequent of its geometric part is
of the form $\gp A\Ra\top$, and therefore an axiom of $\GQC$. Hence,
for any derivation $\D$ of $\alpha$ from $\Gamma$ in $\BQC$, one can
replace any sequent in the derivation with its geometric part, and
the sub-derivations ending in an application of rule 19 by a single
geometric axiom, and the result will be a derivation of $\gp\alpha$
from $\gp\Gamma$ in $\GQC$.
\item
The proof is similar to the previous item. One just needs to additionally
note that the axioms 20-25 are already geometric, the geometric part of
the induction axiom is $\top\Ra\top$, and the geometric part of the
rule of induction is a geometric instance of the same rule, which is
a rule of $\GA$.\qedhere
\end{enumerate}
\end{proof}

As a technical tool for a part of the proof of the next lemma, we temporarily
adopt the standard language of first-order classical theories, in which for
any formulas $A$ and $B$ and any variable $x$, $\neg A$, $A\ra B$ and
$\all x A$ are formulas in their own right (not the abbreviations we have
considered so far). The proof system $\LK$ is a sequent calculus formalizing
first-order classical logic over the standard language. Unlike the sequents
we have considered so far, the sequents in $\LK$ are of the form
$\Delta\Ra\Delta'$, where $\Delta$ and $\Delta'$ are finite lists of formulas
(possibly empty). Classical arithmetical theories can be formalized over
$\LK$ by adding the axioms of equality, arithmetical axioms and the induction
rule (with restricted induction formulas, if necessary). In particular,
for any class $\C$ of formulas, $\IC$ can be formalized by considering
the $\C$-induction rule. See the Appendix for definitions.

\begin{lemma}
\label{2l6.02}
Let $\Gamma$ be a set of geometric seuqents, and $\alpha$ a geometric
sequent.
\begin{enumerate}
\item
$\CQC+\Gamma\vdash\alpha$ iff $\GQC+\Gamma\vdash\alpha$. Consequently,
if $\T$ is any of $\CQC$, $\IQC$ and $\EBQC$, then $\gp\T\vdv\GQC$
and $\T$ is conservative over $\GQC$.
\item
$\IEx+\Gamma\vdash\alpha$ iff $\GA+\Gamma\vdash\alpha$. Consequently,
$\gp{\left(\IEx\right)}\vdv\GA$ and $\IEx$ is conservative over $\GA$.
\end{enumerate}
\end{lemma}
\begin{proof}
$ $
\begin{enumerate}
\item
As $\GQC$ is a sub-system of $\CQC$, one direction is trivial.
For the other direction, consider a derivation for $\alpha$
from $\Gamma$ in the system $\LK$. By \emph{free-cut elimination} of
$\LK+\Gamma$ (Theorem 2.4.5 in \cite{B98}),
there is a derivation of $\alpha$ from $\Gamma$ in $\LK$, every formula
appearing in which is a geometric formula (since every formula in
$\Gamma\cup\{\alpha\}$ is geometric, and the set of geometric formulas is
closed under subformulas and term substitution). As a consequence, none of
the rules ($\neg\Ra$), ($\Ra\neg$), ($\ra\Ra$), ($\Ra\ra$), ($\all\Ra$) and
($\Ra\all$) is used in $\D$. So, $\GQC+\Gamma$ proves $\alpha$, since
the other rules and axioms of $\LK$ are valid in $\GQC$, in the sense
that one can substitute $\bigwedge\Delta\Ra\bigvee\Delta'$
for $\Delta\Ra\Delta'$, and the rules and axioms remain valid.
Here, $\bigwedge\varnothing$ and $\bigvee\varnothing$ are
defined, respectively as $\top$ and $\bot$.
\item
The proof is similar to the previous item. For one direction, note
that every geometric formula is provably equivalent over $\GQC$ to a formula
in $\Ex$, which is nothing but its prenex normal form. Therefore, restricting
the rule of induction in $\GA$ to $\Ex$-formulas results in an
equivalent theory. The rule of $\Ex$-induction is derivable in $\IEx$,
and every other axiom/rule of that theory is an axiom/rule of $\IEx$.
For the other direction, we can use Corollary 2.4.7 in \cite{B98},
which implies that since the set of geometric formulas is closed under
subformulas and substitution, and all the formulas in $\Gamma\cup\{\alpha\}$
and all the induction formulas in a derivation in $\IEx$ are geometric,
if $\IEx+\Gamma\vdash\alpha$ then there is a derivation for it in which
all the formulas are geometric. Again, this means that the derivation
is valid in $\GA$, in the same sense as the previous item.\qedhere
\end{enumerate}
\end{proof}

\begin{corollary}
\label{2c6.1}
For a geometric $\alpha$ and a set $\Gamma$ of geometric sequents,
$\BA+\Gamma\vdash\alpha$ iff $\IEx+\Gamma\vdash\alpha$.
\end{corollary}
\begin{proof}
Combine Proposition \ref{2p5} and Lemma \ref{2l6.02}.
\end{proof}

\begin{definition}
\label{fundef}
$ $
\begin{itemize}
\item
For a function $f:\nn^n\to\nn$ and a
formula $A(\x,y)$ in the language of $\BA$, we say $A$
\emph{defines} $f$ if for every $\a\in\nn^n$ and
every $b\in\nn$, $\nn\models A(\a,b)$ iff $f(\a)=b$.
\item
For a formula $A(\x,y)$ and a variable $y$, the \emph{existence sequent}
related to $A(\x,y)$ and $y$, or simply the \emph{existence sequent} of $A$,
is denoted by $\es(A(\x,y),y)$, or simply by $\es(A)$, and is defined to
be $\top\Ra\ex y A(\x,y)$. If $A(\x,y)$ defines a function $f$,
we also call $\es(A)$ the \emph{existence sequent} of $f$.
The existence sequent is the statement that the relation defined by
$A(\x,y)$ over a given model is \emph{total}.
\item
For a formula $A(\x,y)$ and a variable $y$ and fresh variables $u$ and $v$,
the \emph{uniqueness sequent} related to $A(\x,y)$, $y$, $u$ and $v$,
or simply the \emph{uniqueness sequent} of $A$,
is denoted by $\us(A(\x,y),y,u,v)$, or simply by $\us(A)$, and is defined
to be $A(\x,u)\land A(\x,v)\Ra u=v$.
If $A(\x,y)$ defines a function $f$, we also call
$\us(A)$ the \emph{uniqueness sequent} of $f$.
The uniqueness sequent is the statement that the relation defined by
$A(\x,y)$ over a given model is \emph{functional}; i.e. it coincides
with the graph of a partial function.
\item
For a formula $A(\x,y)$ and a function $f:\nn^n\to\nn$,
we say $A(\x,y)$ \emph{defines $f$ in a theory $\T$},
if it defines $f$ and $\T$ proves its existence and uniqueness sequents.
We call a function \emph{provably total} in a theory $\T$ if it is defined
in $\T$ by some formula. A function is \emph{provably total recursive}
in a theory $\T$ if it is defined in $\T$ by a $\Sig$ formula.
$\PTF(\T)$ and $\PTRF(\T)$ respectively denote the set of all
the provably total functions and the set of all
the provably total recursive functions of $\T$.
\end{itemize}
\end{definition}

\begin{theorem}
\label{2t9}
Let $\Gamma$ be a set of geometric sequents such that $\nn\models\Gamma$.
If a function $f:\nn^n\to\nn$ is defined in $\BA+\Gamma$
by a formula $A(\x,y)$, then it is also defined in $\BA+\Gamma$
by the geometric formula $\gp A(\x,y)$. Consequently
$\PTF(\BA+\Gamma)=\PTRF(\BA+\Gamma)=\PTRF(\GA+\Gamma)=\PTF(\GA+\Gamma)$.
\end{theorem}
\begin{proof}
Assume that a function $f:\nn^n\to\nn$ is defined by
a formula $A(\x,y)$ in the language of $\BA$. If $\BA+\Gamma$ proves
existence and uniqueness sequents related to $A(\x,y)$, then by
Proposition \ref{2p5}, it also proves the existence and
uniqueness sequents related to $\gp A(\x,y)$. Since $\nn$ is a model of
$\BA+\Gamma$, $\gp A(\x,y)$ defines a function $g:\nn^n\to\nn$.
But $g$ is just equal to $f$, since by Proposition \ref{2p2},
$\BQC\vdash A(\x,y)\Ra\gp A(\x,y)$ and that means if
$\nn\models A(\a,f(\a))$ for $\a\in\nn^n$, then
$\nn\models\gp A(\a,f(\a))$. So $f$ is defined in
$\BA+\Gamma$ by $\gp A(\x,y)$, which is a $\Ex$ formula.

Every provably total recursive function in $\BA+\Gamma$ is clearly provably
total, and thus $\PTRF(\BA+\Gamma)\subseteq\PTF(\BA+\Gamma)$.
Conversely, as is shown above, every provably total function in
$\BA+\Gamma$ is definable in $\BA+\Gamma$ by an $\Ex$ formula.
Since $\Ex\subseteq\Sig$, every such function is provably total recursive
in $\BA+\Gamma$, and thus $\PTF(\BA+\Gamma)\subseteq\PTRF(\BA+\Gamma)$.
Also, the above application of Proposition \ref{2p5} guarantees that
the derivations of existence and uniqueness sequents can be done in
the language of geometric logic, which completes the proof.
\end{proof}

\begin{theorem}
\label{isig}
The provably total recursive functions of $\ISig$ are exactly the primitive
recursive functions, i.e. $\PTRF(\ISig)=\PR$.
\end{theorem}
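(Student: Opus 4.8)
The plan is to prove the two inclusions $\PRb\subseteq\PR(\ISig)$ and $\PR(\ISig)\subseteq\PRb$ separately; this equality is the classical theorem of Parsons, Mints and Takeuti (see \cite{ph:arithmetic}), and I would organize the argument along the lines of its standard proof.

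For the inclusion $\PRb\subseteq\PR(\ISig)$, I would argue by induction on the generation of the primitive recursive functions. The base functions (the constant $0$, the successor $S$, and the projections) are immediately $\Sigma_1$-defined by atomic formulas whose existence and uniqueness sequents are provable already in ${\bf Q}$. For composition one simply composes the defining formulas and chains the existence and uniqueness proofs. The only substantive case is primitive recursion: given $f(\x,0)=g(\x)$ and $f(\x,Sy)=h(\x,y,f(\x,y))$, I would define $f$ by the $\Sigma_1$ formula asserting the existence of a code $w$ for the sequence $\langle f(\x,0),\dots,f(\x,y)\rangle$ whose $0$-th entry is $g(\x)$, whose successive entries obey the recursion via $h$, and whose $y$-th entry is the output. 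Totality is then proved by $\Sigma_1$-induction on $y$, the induction step appending one more value to the sequence. The point that needs care here is that $\ISig$ must already develop a usable theory of finite-sequence coding (Gödel's $\beta$-function, the totality of the relevant pairing and append operations), which is standard and is exactly what the availability of $\Sigma_1$-induction provides.

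For the reverse inclusion $\PR(\ISig)\subseteq\PRb$, suppose a provably recursive $f$ is defined by $A(\x,y)\in\Sigma_1$ with $\ISig\vdash\top\Ra\ex y\,A(\x,y)$; writing $A$ in the form $\ex\z\,A_0$ with $A_0\in\Delta_0$, it suffices to show that a witness for $\ex y\ex\z\,A_0$ is bounded by a primitive recursive function of $\x$, since the least witness is then recovered by bounded search using the decidable matrix $A_0$. I would obtain this via Gödel's Dialectica interpretation: after a negative translation into an intuitionistic setting (absorbing the needed instances of Markov's principle, which the functional interpretation validates on $\Delta_0$ matrices), the $\Sigma_1$-induction rule is interpreted using the type-$0$ recursor only, so that the extracted witnessing functionals never climb the finite-type hierarchy and remain primitive recursive in the ordinary sense. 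Alternatively one could run a proof-theoretic argument: embed the proof into a suitable (possibly infinitary) sequent calculus and perform cut elimination down to $\Sigma_1$ cuts, reading off a witnessing function whose growth rate is controlled by the proof-theoretic ordinal $\omega^\omega$ associated with $\Sigma_1$-induction.

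The main obstacle is precisely this second inclusion, and within it the quantitative bookkeeping: with full induction the same extraction yields the much larger class of $<\!\epsilon_0$-recursive functions, so the entire content of the theorem lies in verifying that the restriction to $\Sigma_1$-induction caps the complexity of the witnessing functionals (equivalently, the iteration depth of the recursor) at the primitive recursive level. Making this control explicit — whether through the type analysis of the Dialectica interpretation or through the ordinal bound in cut elimination — is the crux; the easy direction and the reduction to a witness bound are routine by comparison.
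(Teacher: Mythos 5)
The paper proves this theorem by a bare citation of the classical Parsons--Mints--Takeuti result as presented in \cite{ph:arithmetic} (Chapter 4, Corollary 3.7), and your proposal invokes exactly the same classical theorem and the same reference, with your two-inclusion sketch ($\beta$-function coding plus $\Sigma_1$-induction for $\PRb\subseteq\PR(\ISig)$, and witness extraction via Dialectica or cut elimination with the $\omega^\omega$ bound for the converse) being a faithful outline of the standard proof behind that citation. So the proposal is correct and takes essentially the same approach as the paper.
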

\begin{proof}
\cite{HP}, Chapter 4, Corollary 3.7.
\end{proof}

\begin{theorem}
\label{2t10}
The provably total functions of $\BA$ are primitive recursive, i.e.
$\PTF(\BA)\subseteq\PR$. Furthermore these functions are
definable in $\BA$ by $\Ex$ formulas.
\end{theorem}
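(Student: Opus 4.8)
The plan is to reduce provability in $\BA$ to provability in the classical theory $\ISig$, whose provably recursive functions are already known to be exactly $\PRb$ by Theorem \ref{isig}. Suppose $f:\nn^n\to\nn$ is provably recursive in $\BA$, so some $\Sigma_1$ formula $A(\x,y)$ defines $f$ in $\nn$ and $\BA$ proves both $\es(A)$ and $\us(A)$. First I would apply Theorem \ref{2t9} (with $\Gamma^\ex=\varnothing$, noting trivially $\nn\models\varnothing$) to replace $A$ by its positive part $A^\ex$: this gives an $\ex_1^+$ formula $A^\ex(\x,y)$ that still defines $f$ in $\BA$, proving the ``furthermore'' clause about $\ex_1^+$-definability at the same time. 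Thus $\BA\vdash\es(A^\ex)$ and $\BA\vdash\us(A^\ex)$, where both sequents now involve only positive existential formulas.

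The next step is to transfer these two sequents into $\IEx$. Since $A^\ex$ is $\ex_1^+$, the sequents $\es(A^\ex)$ and $\us(A^\ex)$ are built from $\ex_1^+$ formulas, and by Proposition \ref{2p5} together with Lemma \ref{2l6.1} (taking $\Gamma^\ex=\varnothing$), $\BA$-provability of an $\ex_1^+$ sequent coincides with $\IEx$-provability. Hence $\IEx$ proves the existence and uniqueness sequents of $A^\ex$. Because $\IEx$ is a classical theory with induction restricted to a subclass of $\Sigma_1$, it is contained in $\ISig$, so $\ISig$ also proves these sequents; as $A^\ex$ is in particular $\Sigma_1$ and defines $f$ in $\nn$, the function $f$ is provably recursive in $\ISig$. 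By Theorem \ref{isig}, $f\in\PR(\ISig)=\PRb$, which gives $\PR(\BA)\subseteq\PRb$.

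I expect the main obstacle to be the careful bookkeeping around which logical framework each sequent lives in, rather than any single deep argument. The delicate point is the passage through Lemma \ref{2l6.1}: one must be sure that the existence and uniqueness sequents of $f$, after taking positive parts, genuinely consist of $\ex_1^+$ formulas so that the equivalence between $\BA$-provability and $\IEx$-provability applies verbatim; the uniqueness sequent $A^\ex(\x,u)\land A^\ex(\x,v)\Ra u=v$ has an implication at the top level, but its antecedent and succedent are positive, which is exactly what $\ex_1^+$-sequents permit, so the reduction is legitimate. A secondary subtlety is confirming the inclusion $\IEx\subseteq\ISig$ at the level of provable sequents and checking that $\IEx$-definability via a $\Sigma_1$ formula feeds correctly into the hypotheses of Theorem \ref{isig}; both are routine once the class inclusions $\ex_1^+\subseteq\ex_1\subseteq\Sigma_1$ are made explicit. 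Everything else is a direct chaining of the already-established Theorem \ref{2t9}, Lemma \ref{2l6.1}, and Theorem \ref{isig}.
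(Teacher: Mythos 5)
Your proposal is correct and follows essentially the same route as the paper's own proof: use Theorem \ref{2t9} to replace the defining formula by its positive part $A^\ex$ (yielding the $\ex_1^+$-definability claim), transfer the existence and uniqueness sequents to $\IEx$ via Lemma \ref{2l6.1}, then note $\ex_1^+\subseteq\Sigma_1$ and $\ISig\vdash\IEx$ to conclude $f$ is provably recursive in $\ISig$, hence primitive recursive by Theorem \ref{isig}. Your extra care about the sequent arrow versus the implication connective in the uniqueness sequent, and about the inclusion $\IEx\subseteq\ISig$, only makes explicit what the paper leaves implicit.
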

\begin{proof}
Let $f$ be a provably total function in $\BA$. Then by Theorem
\ref{2t9}, it is definable in $\BA$ by a geometric formula. So its existence
and uniqueness sequents consist of geometric formulas, and by Corollary
\ref{2c6.1}, $\IEx$ proves these sequents as well. Then $f$ is
defined in $\IEx$ by a geometric formula. As
$\Ex\subseteq\Sig$, $f$ is provably total recursive in $\ISig$.
Hence by Theorem \ref{isig}, $f$ is primitive recursive.
\end{proof}

We now show that not all primitive recursive functions are
provably total in $\BA$. For this purpose, we need to take a look at $\N$,
a specific structure, that \emph {is} a model of $\IEx$, but \emph {not} a 
model of $\Iop$.
\begin{definition}
\label{3n17}
$ $
\begin{itemize}
\item
$\N=\nn\cup\{\8\}$ is a classical structure, where $\8$ is a nonstandard
element such that $S\8=\8+\8=\8\cdot\8=\8$,
$0\cdot\8=\8\cdot0=0$, $\8<\8$, $n<\8$, $\8\nless n$, and
$n+\8=\8+n=(n+1)\cdot\8=\8\cdot(n+1)=\8$,
for every $n\in\nn$.
\item
$\K$ is the Kripke model with just one irreflexive
node with the structure $\N$.
\end{itemize}
\end{definition}
$\N$ is not a model of $\Iop$, because using induction on the
open formula $\neg Sx=x$, one can get $\Iop\vdash Sx=x\Ra\bot$,
which is not satisfied in $\N$.
To show that $\N$ is a model of $\IEx$, we need the observation that
terms in the language of arithmetic correspond to polynomials with
non-negative integer coefficients. Univariate polynomials of this kind
are either constant or strictly increasing over the set of natural numbers.
Also, if two such polynomials agree on infinitely many points, they are
identical. When extending the domain and codomain to $\N$, we see that
in the constant case, the polynomial takes the same value at $\8$ as at
other points, and in the strictly increasing case, it takes the value $\8$
at $\8$. We can generalize these observations by considering terms in the
language of $\N$ (the language of arithmetic extended with parameters from
$\N$), in the following way. Such terms correspond to multivariate
polynomials with coefficients from $\N$. In the univariate case,
two such polynomials agreeing on infinitely many points are equal,
possibly with the exception at $0$. Even in the multivariate case, either the
polynomial is constantly equal to a natural number, or it takes the value
$\8$ at $\9$, where $\9=\langle\8,\dots,\8\rangle$ with the length equal to
the number of variables of the polynomial. We use these observations
to prove that $\N$ has the overspill property for geometric formulas.

\begin{lemma}
\label{3l100}
For every $\ex^+_1$ formula $A(x)$ in the language
of $\N$ with $x$ as the only free variable, if the set
$\{a\in\nn|\N\models A(a)\}$ is infinite, then $\N\models A(\8)$.
\end{lemma}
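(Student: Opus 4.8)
The plan is to reduce the claim to a statement about a single atomic formula, which can then be read off from the polynomial facts recorded just before the lemma. First I would put $A$ into a convenient normal form: writing $A(x)$ as $\ex\y\,B(x,\y)$ with $B$ positive and quantifier-free, I put $B$ into disjunctive normal form and distribute the existential quantifier over the disjunctions. Since $\N$ is a classical structure, this produces a formula that holds in $\N$ exactly when some disjunct of a finite disjunction $\bigvee_i\ex\y\,\bigwedge_j\alpha_{ij}(x,\y)$ does, where each $\alpha_{ij}$ is atomic, i.e.\ of the form $s=t$ or $s<t$ for terms $s,t$. If $\{a\in\nn:\N\models A(a)\}$ is infinite then, the disjunction being finite, one disjunct $C(x)=\ex\y\,\bigwedge_j\alpha_j(x,\y)$ holds for infinitely many $a\in\nn$; since $\N\models C(\8)$ would give $\N\models A(\8)$, it suffices to treat $C$.

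Next I would tame the existential witnesses. Let $S\subseteq\nn$ be the infinite set on which $C$ holds, and for each $a\in S$ fix $\mathbf b_a\in\N^k$ with $\N\models\bigwedge_j\alpha_j(a,\mathbf b_a)$. Enumerating $S$ in increasing order and thinning it one coordinate at a time (a routine pigeonhole argument), I pass to an infinite $S^\ast\subseteq S$ along which $a\to\8$ and, for each coordinate $i$, the witness values $b_{a,i}$ are either constantly $\8$, constantly equal to a fixed natural number $c_i$, or a strictly increasing sequence of natural numbers. Setting $d_i=c_i$ in the middle case and $d_i=\8$ in the other two, and $\mathbf d=(d_1,\dots,d_k)$, the goal reduces to proving $\N\models\alpha_j(\8,\mathbf d)$ for every $j$, for then $\mathbf d$ witnesses $\N\models C(\8)$.

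The heart of the argument is this atomic step. Fix one $\alpha_j$, built from terms $s,t$, and substitute into $s$ and $t$ the fixed witness coordinates, namely $\8$ for each coordinate of the first kind and $c_i$ for each of the second, leaving $x$ and the strictly increasing coordinates as variables; call the results $\tilde s,\tilde t$. These are again terms in the language of $\N$, and by construction their value at the all-$\8$ point $\9$ equals $s(\8,\mathbf d)$, resp.\ $t(\8,\mathbf d)$, while their value along the increasing witnesses equals $s(a,\mathbf b_a)$, resp.\ $t(a,\mathbf b_a)$. By the observation preceding the lemma, each of $\tilde s,\tilde t$ is either constantly a natural number or takes the value $\8$ at $\9$. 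I would then compare the two shapes of $\alpha_j$ against the order conventions $n<\8$, $\8<\8$, $\8\not<n$ of $\N$. For $s=t$: if both reduced terms are non-constant they equal $\8$ at $(\8,\mathbf d)$ and the equality holds; if both are constant the hypothesis $s(a,\mathbf b_a)=t(a,\mathbf b_a)$ forces the same constant; the mixed case is impossible, since a non-constant reduced term is unbounded along the increasing witnesses while a constant one is not. For $s<t$: if $\tilde t$ is non-constant then $t(\8,\mathbf d)=\8$ and $s(\8,\mathbf d)<\8$ holds whatever $s(\8,\mathbf d)$ is; if $\tilde t$ is a constant natural, then $s(a,\mathbf b_a)<t(a,\mathbf b_a)$ keeps $\tilde s$ bounded along the increasing witnesses, so $\tilde s$ is also a constant, necessarily strictly smaller, and the strict inequality survives at $(\8,\mathbf d)$.

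The one point I expect to need genuine care is the growth fact invoked twice above: a term in the language of $\N$ that is not constantly a natural number is unbounded — in fact eventually $\8$ — along any tuple of natural arguments all tending to $\8$. This slightly strengthens the quoted dichotomy, which speaks only about the single point $\9$, but it follows from the same analysis: such a term has a positive-degree monomial with nonzero coefficient, and once the arguments all exceed $0$ that monomial, and hence — there being no subtraction in $\N$ — the whole term, grows without bound. Everything else is bookkeeping on top of the observations already in place: the normal form, the pigeonhole thinning, and the remark that substituting the constants $\8$ and $c_i$ into a term again yields a term of the language of $\N$ to which the dichotomy applies, the annihilation $\8\cdot0=0$ notwithstanding.
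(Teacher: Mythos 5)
Your proof is correct, but it takes a genuinely different route from the paper's. The paper (implicitly using Remark \ref{1r7}) keeps only equality atoms, reduces to a single disjunct $B(x)\equiv\ex\y\bigwedge_j s_j(x,\y)=t_j(x,\y)$, and argues by \emph{induction on $|\y|$}: if some equation fails at $(\8,\9)$, then one side is a constant $c\in\nn$, and a witness function $f:S\to(\N)^{|\y|}$ is analyzed by cases --- if $\range(f)$ is finite, pigeonhole reduces to the univariate base case; if infinite, one of the sets $W_{l,l'}$ of witnesses whose $l$-th coordinate equals a fixed natural $l'\le c$ must be infinite, and substituting $l'$ for that variable removes one quantifier so the induction hypothesis applies. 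You avoid the induction and this case split entirely by a one-shot Ramsey-type thinning of the witnesses into canonical behaviors (constantly $\8$, constantly a fixed natural, strictly increasing), followed by an atomwise limit argument. The cost is the strengthened growth fact you correctly isolate (a term not constantly equal to a natural number is unbounded along natural tuples whose coordinates all tend to infinity); note this is not really an extra debt, since the paper needs essentially the same estimate inside its $W_{l,l'}$ argument (the step $a^vp_v(f(a))>c$ for arguments exceeding $c$). What your route buys: no induction on the number of quantifiers, uniform handling of $=$ and $<$ atoms (the paper's normal form requires first unfolding $s<t$ as $\ex z\,(s+Sz=t)$), and a more transparent ``thin, then pass to the limit'' structure. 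What the paper's route buys: it invokes the polynomial observations only in the forms literally stated before the lemma (agreement on infinitely many points, and the value at $\9$), at the price of heavier combinatorial bookkeeping. Your pigeonhole thinning, the three-way atomic case analysis (including that $s(a,\mathbf{b}_a)<c$ forces $\tilde{s}$ to be constant, and that both $n<\8$ and $\8<\8$ hold in $\N$ so the non-constant-$\tilde{t}$ case is automatic), and your handling of the annihilation $\8\cdot 0=0$ are all sound.
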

\begin{proof}
Since $A(x)$ is geometric, we can assume that it is of the form
$\bigvee_i\ex\y\bigwedge_js_{ij}(x,\y)=t_{ij}(x,\y)$. Because
$\{a\in \nn|\N\models A(a)\}$ is infinite,
$\left\{a\in\nn\Big|\N\models\ex\y\bigwedge_js_{kj}(a,\y)=t_{kj}(a,\y)\right\}$
is infinite for some index $k$. For the sake of simplicity, we shall omit
the index $k$ and write the formula $\ex\y\bigwedge_js_{kj}(x,\y)=t_{kj}(x,\y)$
as $B(x)\equiv\ex\y\bigwedge_js_j(x,\y)=t_j(x,\y)$. Let
$S=\{a\in\nn|\N\models B(a)\}$. We show $\N\models B(\8)$ by induction on
$|\y|$ (number of variables appearing in $\y$), which implies
$\N\models A(\8)$. For the base case, $|\y|=0$ and hence $B(x)$ is of the form
$\bigwedge_js_{j}(x)=t_{j}(x)$. Because $S$ is infinite, for every $j$, the
polynomials corresponding to $t_j(x)$ and $s_j(x)$ agree on infinitely many
points, and thus give equal values for nonzero inputs. Hence
$s_j(\8)=t_j(\8)$ for every $j$, which means $\N\models B(\8)$.

For the induction step, assume the statement is true when $|\y|=n$, and suppose
$|\y|$ is $n+1$ in $B(x)$. Let $\9=\langle\8,\dots,\8\rangle$, with $|\9|=|\y|$.
If $\N\models \bigwedge_js_{j}(\8,\9)=t_{j}(\8,\9)$, we are done. Otherwise,
there exists an index $u$ such that $\N\models s_u(\8,\9)\ne t_u(\8,\9)$.
Note that in this case, one of $s_u(x,\y)$ and $t_u(x,\y)$ must be constantly
equal to some $c\in\nn$ and the other one must take the value $\8$ at $(\8,\9)$
(they cannot both take the value $\8$ at $(\8,\9)$, and if they are both
constant, then the constants must be different and $S$ will be empty,
contradicting the assumption).
Without loss of generality, assume $t_u(x,\y)=c$. We can represent $s_u(x,\y)$
as $\sum_{i=0}^mx^ip_i(\y)$, where $m\in\nn$ and each $p_i(\y)$ is a term with
variables only from $\y$. We claim that there exists $v$ such that $p_v(\y)$
is not a constant polynomial. Suppose this is not the case, which
means $s_u(x,\y)=\sum_{i=0}^ma_ix^i$, for some $a_0,\dots,a_m\in\N$.
Because $S$ is infinite, there exists $d\in S$ such that $d>c$. We know
$\N\models B(d)$, so in particular, $s_u(d,\y)=\sum_{i=0}^ma_id^i=c$.
This implies that $a_0=c$ and for all $i>0$, $a_i=0$. Thus $s_u(x,\y)=c$,
which leads to a contradiction as $s_u(\8,\9)=\8$. Hence our assumption was
false and there exists $v$ such that $p_v(\y)$ is not a constant polynomial.
Let $f:S\to(\N)^{n+1}$ be a function such that for every $a\in S$,
$\N\models \bigwedge_js_{j}(a,f(a))=t_{j}(a,f(a))$.
The following two cases can happen:
\begin{enumerate}
\item $\range(f)$ is finite:

	In this case there exists $\b\in \range(f)$ such that
	$\{a\in\nn|\N \models \bigwedge_js_{j}(a,\b)=t_{j}(a,\b)\}$ is infinite,
	hence we can use the base step and the proof is complete.
\item $\range(f)$ is infinite:

	For every natural numbers $1\le l\le n+1$ and $ 0\le l'\le c$, define
	$W_{l,l'}=\{\b\in\range(f)|\b_l=l'\}$, and let $W=\bigcup_{l,l'}W_{l,l'}$.
	We claim that at least one of $W_{l,l'}$'s must be infinite. Suppose this
	is not the case. By the assumption, $\range(f)\setminus W$ is infinite.
	This implies that there exists a nonzero $a\in S$ such that
	$f(a)\notin W$. But this leads to a contradiction, because $a>0$ and
	$(f(a))_i>c$ for every $1\le i\le n+1$, hence $a^vp_v(f(a))>c$,
	which implies $s_u(a,f(a))>c$. Therefore for some $h,h'$, $W_{h,h'}$ is
	infinite. Let $B'(x)$ be the formula obtained by removing the existential
	quantifier over $\y_h$ in $B(x)$ and substituting every free occurrence of
	$\y_h$ by $h'$. By the fact that $W_{h,h'}$ is infinite we get
	$\{a\in\nn|\N\models B'(a)\}$ is also infinite. Note that $B'(x)$ has
	less $\y$ variables, so by the induction hypothesis $\N\models B'(\8)$,
	which implies $\N\models B(\8)$.\qedhere
\end{enumerate}
\end{proof}

\begin{lemma}
\label{3d17}
$ $
\begin{enumerate}
\item $\N$ is a (classical) model of $\IEx$.
\item $\K$ is a model of $\BA$.
\end{enumerate}
\end{lemma}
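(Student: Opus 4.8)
The plan is to treat the two parts in order, with the overspill property (Lemma \ref{3l100}) doing the work in the first and a reduction of forcing to classical truth of positive parts doing the work in the second.

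For part (1) I would first check $\N\models\mathbf Q$ directly, by a finite case analysis according to whether the arguments are standard or equal to $\8$: $Sx=0$ fails everywhere since $S\8=\8\neq 0$ and $Sn=n+1$; $S$ is injective since $S\8=\8$ while every $Sn$ is finite; the recursion axioms Q3--Q6 hold by the stipulated values of $+$ and $\cdot$ at $\8$; and Q7 holds because $\8=S\8$. The substantive point is the $\ex_1^+$-induction schema (axiom 7). Fix parameters $\b\in\N$; then an instance $A(x,\b)$ is an $\ex_1^+$ formula in the language of $\N$ with $x$ as its only free variable. Assuming $\N\models A(0,\b)$ and $\N\models\all x\,(A(x,\b)\ra A(Sx,\b))$, ordinary induction in the metatheory gives $\N\models A(n,\b)$ for every $n\in\nn$, so $\{a\in\nn\mid\N\models A(a,\b)\}=\nn$ is infinite, and Lemma \ref{3l100} yields $\N\models A(\8,\b)$. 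Hence $\N\models\all x\,A(x,\b)$, which is what axiom 7 demands, so $\N\models\IEx$.

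For part (2) the crucial observation is that the unique node $k$ of $\K$ is irreflexive and has no node strictly above it, so that every subformula of the shape $\all\x\,(A\ra B)$, and in particular every implication, is forced at $k$ vacuously. By induction on the complexity of a formula $C$ I would prove the translation
\[k\Vdash C[\x/\mathbf c]\iff\N\models C^\ex[\x/\mathbf c]\qquad(\mathbf c\in\N),\]
in which the atomic case is the definition of forcing through $\M=\N$, the $\land$, $\lor$ and $\ex$ cases are immediate, and the $\all/\ra$ case matches the vacuous clause against $C^\ex\equiv\top$. It follows that $\K\Vdash(A\Ra B)$ iff $\N\models\all\x\,(A^\ex\ra B^\ex)$, and that a rule is valid in $\K$ iff the corresponding implication from the positive translations of its premises to that of its conclusion holds in $\N$.

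It then remains to pass the axioms and rules of $\BA$ through this translation. The axioms 8--13 of $\BQC$ and the induction axiom (axiom 7 of $\BA$) become $\top\Ra\top$, since both of their sides are $\all/\ra$ formulas, and are thus trivially valid; axioms 1--7 of $\BQC$ and the arithmetical axioms 1--6 of $\BA$ translate to classically valid positive statements (distribution, the $\land/\ex$ exchange, Leibniz for atomic formulas, and the defining equalities), which hold because $\N\models\mathbf Q$. For the rules, 14--18 translate to the standard classical inferences (transitivity, pairing, case split, substitution---using that $(\cdot)^\ex$ commutes with substitution---and the $\ex$-rule), and rule 19 translates trivially because its conclusion has positive part $A^\ex\Ra\top$. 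The one rule carrying content is the induction rule (axiom 8 of $\BA$): its translation asserts that $\N\models\all\y x\,(A^\ex\ra A^\ex[x/Sx])$ implies $\N\models\all\y x\,(A^\ex[x/0]\ra A^\ex)$, and since $A^\ex$ is positive, hence $\ex_1^+$ up to provable equivalence, this is an instance of $\ex_1^+$-induction and holds by part (1). The main obstacle is therefore concentrated in the induction step of part (1), namely invoking overspill with parameters correctly; once the translation lemma is in place, part (2) is a bookkeeping verification over the axiom list.
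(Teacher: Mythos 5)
Your proof of part (1) is essentially identical to the paper's: check the axioms of ${\bf Q}$ by direct inspection of the stipulated behaviour of $S$, $+$, $\cdot$ at $\8$, then reduce the $\ex_1^+$-induction schema to the overspill Lemma \ref{3l100}, after observing that the successor step (with parameters $\b$ fixed) forces $\{a\in\nn\mid\N\models A(a,\b)\}=\nn$. For part (2), however, you and the paper genuinely diverge: the paper disposes of it in one line by citing Theorem 2.18 of \cite{kn:AH} as a black box, whereas you reprove the underlying fact from scratch --- namely that forcing at a single irreflexive node collapses to classical truth of positive parts, $k\Vdash C$ iff $\N\models C^\ex$ --- and then push every axiom and rule of $\BA$ through this translation, noting that everything whose conclusion is an $\all/\ra$-formula trivializes, that the remaining axioms become classically valid positive schemas true in $\N\models{\bf Q}$, and that the one substantive case, the induction rule, becomes exactly $\ex_1^+$-induction in $\N$ and hence follows from part (1). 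Your route buys self-containedness and makes transparent precisely where part (1) enters (only through the induction rule); the paper's citation buys brevity. Two minor points in your write-up, neither affecting correctness: the positive parts of axioms 8--10 of $\BQC$ are of the form $\top\land\top\Ra\top$ rather than literally $\top\Ra\top$; and the reduction of the induction rule to $\IEx$-induction uses the classically true remark (which you do flag) that every positive formula is equivalent to an $\ex_1^+$ formula by prenexing, which deserves its one sentence of justification.
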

\begin{proof}
$ $
\begin{enumerate}
\item It is easy to check that $\N$ satisfies Robinson's axioms and thus we
only need to show that it satisfies the induction axiom. Suppose $A(x,\y)$
is a $\Ex$ formula and $\N\models \all x\y \, (A(x,\y)\ra A(Sx,\y))$.
If $\N\models A(0,\b)$ for some $\b \in\N$,
then $\{a\in \nn|\N\models A(a,\b)\}=\nn$ and
by Lemma \ref{3l100} $\N\models A(\8,\b)$.
Hence $\N\models \all x\y \, (A(x,\y)\ra A(Sx,\y))
\Ra\all x\y \, (A(0,\y)\ra A(x,\y))$.
\item By Theorem 2.18 in \cite{AH}, a Kripke model with a single
irreflexive node is a model of $\BA$ iff it classically satisfies $\IEx$.
Use this together with the previous item.\qedhere
\end{enumerate}
\end{proof}

\begin{lemma}
\label{3l18}
Let $A$ be a formula in the language of arithmetic.
\begin{enumerate}
\item
If $A$ is geometric and $\nn\models A$ then $\N\models A$.
\item
If $\nn\models \ex y\:(y>x \land A)$, then $\K\Vdash A[y/\8]$.
\end{enumerate}
\end{lemma}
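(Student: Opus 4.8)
The plan is to prove the two parts in order, using Part (1) as a lemma inside Part (2). Throughout I will use that $\nn$ is a substructure of $\N$: the nonstandard clauses of Definition \ref{3n17} only involve $\8$, so $\N$ and $\nn$ agree on finite arguments, and consequently existential–positive formulas are preserved upward from $\nn$ to $\N$. I read the hypothesis of Part (2) with $x$ not occurring in $A$, so that $\ex y\,(y>x\land A)$ expresses that arbitrarily large $y$ satisfy $A$; this restriction is genuinely needed, since otherwise the statement fails (take $A\equiv y=Sx$: the hypothesis holds but $\N\models\8=Sx$ is false).

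For Part (1), fix an arbitrary $\N$-assignment $\rho$ and split the free variables of $A$ into those sent by $\rho$ to finite values $\bar n$ and those sent to $\8$, say $y_1,\dots,y_m$. First I would put $A$ into prenex existential–positive form $\ex\bar u\,(C)$ with $C$ positive and quantifier–free (positive formulas are classically equivalent to $\ex^+_1$ formulas, pulling the existentials out through $\land$ and $\lor$). Since $\nn\models A$, for every finite tuple $\bar d$ we have $\nn\models A(\bar n,\bar d)$, hence $\N\models A(\bar n,\bar d)$ by upward preservation. I then push the coordinates $y_1,\dots,y_m$ to $\8$ one at a time, maintaining the invariant: $\N\models A(\bar n,\underbrace{\8,\dots,\8}_{i},y_{i+1},\dots,y_m)$ for all finite $y_{i+1},\dots,y_m$. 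To advance from $i$ to $i+1$, fix $y_{i+2},\dots,y_m$ as numerals; the resulting formula in the single free variable $y_{i+1}$ is $\ex^+_1$ in the language of $\N$ and holds for every natural number, so its satisfaction set is infinite and Lemma \ref{3l100} yields $\N\models A(\bar n,\underbrace{\8,\dots,\8}_{i+1},\dots)$. After $m$ steps $\N\models A[\rho]$, and as $\rho$ was arbitrary, $\N\models A$.

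For Part (2), the first step is to compute forcing at the single irreflexive node $k$ of $\K$. Since $k$ has no strict successor, every subformula of the form $\all\bar u\,(B\ra C)$ — in particular every implication and every negation — is vacuously forced at $k$; as $\land,\lor,\ex$ are handled pointwise and atomic forcing is just truth in $\N$, a straightforward induction on $C$ gives $k\Vdash C\Lr\N\models C^\ex$ for every formula $C$ and every assignment, where $C^\ex$ is the positive part of Definition \ref{2d1}. In particular, using that substitution commutes with $(\cdot)^\ex$, the goal $\K\Vdash A[y/\8]$ is equivalent to $\N\models A^\ex[y/\8]$. Now from $\nn\models\ex y\,(y>x\land A)$ and Proposition \ref{2p2} we get $\nn\models\ex y\,(y>x\land A^\ex)$; this is a positive formula, so Part (1) gives $\N\models\ex y\,(y>x\land A^\ex)$. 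Instantiating $x$ to $\8$, any witness $y$ must satisfy $\8<y$, and by Definition \ref{3n17} the only such element of $\N$ is $\8$ itself (indeed $\8<\8$ while $\8\not<n$ for finite $n$). Hence the witness is forced to equal $\8$, giving $\N\models A^\ex[y/\8]$ under every assignment to the remaining variables, which is exactly $\K\Vdash A[y/\8]$.

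The main obstacle is Part (1): Lemma \ref{3l100} only supplies overspill for a single free variable, so the real work is reducing the multivariate statement to repeated single-variable applications. The delicate point is that after substituting $\8$ into some coordinates one obtains a formula in the extended language of $\N$ (with $\8$ as a constant), which must still be recognized as $\ex^+_1$ so that Lemma \ref{3l100} applies; verifying that prenex normalization and the successive substitutions keep the formula positive, and that the satisfaction set at each stage is genuinely infinite, is where care is needed. Part (2) is then largely bookkeeping, its only new ingredients being the reduction of node-forcing to the positive part and the observation that $\8<\8$ pins the witness down to $\8$.
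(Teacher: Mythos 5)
Your proof is correct and takes essentially the same route as the paper's: Part (1) by iterated application of the one-variable overspill Lemma \ref{3l100} (the paper phrases it as induction on the number of free variables, you as pushing the $\8$-coordinates in one at a time), and Part (2) by passing to the positive part via Proposition \ref{2p2}, applying Part (1), and instantiating $x$ at $\8$, where the only element of $\N$ above $\8$ is $\8$ itself. Your explicit caveat that $x$ should not occur in $A$ is a fair reading: the paper's own proof and all its applications of the lemma implicitly assume exactly this.
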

\begin{proof}
$ $
\begin{enumerate}
\item
We use induction on the number of free variables of $A$. If $A$ is a
sentence, then since it is an existential sentence, and $\nn$ is a
classical substructure of $\N$, we get $\N\models A$. If $\x y$ is the
sequence of variables free in $A$, then by induction hypothesis, for
every $b\in\nn$ we have $\N\models A[y/b]$. This shows that for every
$\a\in\N$, $\{b\in\nn|\N\models A[\x/\a][y/b]\}=\nn$. Thus by Lemma
\ref{3l100} we have $\N\models A$.
\item
By Proposition \ref{2p2} we have $\nn\models \ex y\:(y>x \land\gp A)$.
By item 1 we get $\N\models \ex y\:(y>x \land\gp A)$, which shows that
$\N\models\gp A[y/\8]$, and thus $\K\Vdash\gp A[y/\8]$. Since $\K$
consists of just one irreflexive node, we have $\K\Vdash\gp A\Lr A$,
which yields what is desired.\qedhere
\end{enumerate}
\end{proof}

\begin{definition}
\label{3d19} A set $\A\subseteq \nn$ is
\emph{provably decidable} in a theory $\T$ if
$\chi_\A$, i.e., its characteristic function, is provably
total recursive in $\T$.
\end{definition}
\begin{theorem}
\label{3t20} The provably decidable sets of $\BA$ are exactly the
sets $\A\subseteq \nn$ that are either finite or
co-finite (i.e. $\cmt\A=\nn\setminus\A$ is finite).
\end{theorem}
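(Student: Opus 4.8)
The plan is to prove both inclusions of the stated equivalence. Recall from Definition \ref{3d19} that $\mathcal A$ is provably decidable in $\BA$ exactly when its characteristic function $\chi_{\mathcal A}$ is provably recursive in $\BA$, so what must be shown is that $\chi_{\mathcal A}$ is provably recursive in $\BA$ if and only if $\mathcal A$ is finite or cofinite.

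For the direction ``provably decidable $\Rightarrow$ finite or cofinite'', I would start from the hypothesis that $\chi_{\mathcal A}$ is provably recursive in $\BA$. By Theorem \ref{2t10}, or more precisely by the argument in its proof (which goes through Theorem \ref{2t9} and Lemma \ref{2l6.1}), there is a positive ($\ex_1^+$) formula $B(x,y)$ that defines $\chi_{\mathcal A}$ in $\IEx$; in particular $\IEx$ proves the uniqueness sequent $\us(B)$, namely $B(x,u)\land B(x,v)\Ra u=v$. Since $\N\models\IEx$ by Lemma \ref{3d17}, the classical structure $\N$ satisfies $\all xuv\,(B(x,u)\land B(x,v)\ra u=v)$. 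Now I argue by contradiction, assuming both $\mathcal A$ and $\mathcal A^c$ are infinite. Because $B$ defines $\chi_{\mathcal A}$ in $\nn$, the set $\{a\in\nn\mid\nn\models B(a,S0)\}$ equals $\mathcal A$ and is infinite, and likewise $\{a\in\nn\mid\nn\models B(a,0)\}=\mathcal A^c$ is infinite. As $B$ is positive, Lemma \ref{3l18} transfers each instance from $\nn$ to $\N$, so $\{a\in\nn\mid\N\models B(a,S0)\}$ and $\{a\in\nn\mid\N\models B(a,0)\}$ are both infinite. Applying the overspill Lemma \ref{3l100} to the positive one-variable formulas $B(x,S0)$ and $B(x,0)$ then yields $\N\models B(\8,S0)$ and $\N\models B(\8,0)$, whence uniqueness forces $\N\models S0=0$, contradicting $\N\models Sx=0\Ra\bot$ (an axiom of $\IEx$). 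Hence at most one of $\mathcal A,\mathcal A^c$ is infinite, i.e. $\mathcal A$ is finite or cofinite.

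For the converse, suppose $\mathcal A$ is finite or cofinite; then there are $N\in\nn$ and $e\in\{0,1\}$ with $\chi_{\mathcal A}(x)=e$ for all $x>N$. I would take the positive formula $B(x,y)\equiv\big(\bigvee_{i\le N}(x=i\land y=\chi_{\mathcal A}(i))\big)\lor(N<x\land y=e)$, which is $\ex_1^+$ (recall that $N<x$ abbreviates $\ex z\,(N+Sz=x)$) and defines $\chi_{\mathcal A}$ in $\nn$. It then remains to verify that $\BA$ proves the existence sequent $\es(B)$, namely $\top\Ra\ex y\,B(x,y)$, and the uniqueness sequent $\us(B)$. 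Every formula occurring in these two sequents is positive, so by Lemma \ref{2l6.1} it suffices to prove them in the classical theory $\IEx$, where full classical logic and the axioms of ${\bf Q}$ are available. Existence follows from the positive, inductively provable case distinction $\bigvee_{i\le N}x=i\lor N<x$; uniqueness reduces to the disjointness of the displayed cases, i.e. to the facts that distinct numerals are provably unequal and that $N<i$ is refutable for each $i\le N$. By Lemma \ref{2l6.1} these positive sequents are then provable in $\BA$ as well, so $B$ defines $\chi_{\mathcal A}$ in $\BA$; being $\ex_1^+\subseteq\Sigma_1$, it witnesses that $\chi_{\mathcal A}$ is provably recursive, i.e. that $\mathcal A$ is provably decidable in $\BA$.

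The main obstacle I anticipate is not the overspill argument, which is handed to us by Lemma \ref{3l100}, but the uniqueness check in the converse: because neither $\BA$ nor $\IEx$ proves irreflexivity of $<$ in general (indeed $\N\models\8<\8$), one cannot invoke a blanket ``$x<x\Ra\bot$'' to separate the two clauses of $B$. The point to get right is that the dangerous cross-case only ever compares a standard numeral $i\le N$ against the clause $N<x$, so the required refutation $\neg(N<i)$ concerns fixed numerals and is obtained by a bounded, successor-peeling computation (repeatedly using $N+Sz=S(N+z)$ and $Sx=Sy\Ra x=y$, and splitting on $z$ via $z=0\lor\ex z'(z=Sz')$). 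Isolating this, and confirming that all the sequents involved are genuinely positive so that Lemma \ref{2l6.1} applies and the verification may be carried out in the better-behaved classical theory $\IEx$, is the delicate part of the argument.
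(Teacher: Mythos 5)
Your proof is correct, but it is packaged differently from the paper's. For the forward direction the paper stays inside Kripke semantics: from $\BA\vdash\us(B)$ it derives a contradiction in the one-node irreflexive Kripke model $\K$, using Lemma \ref{3l18}(2) (whose proof combines positivity, transfer to $\N$, and the absorbing behaviour of $\8$) to force both $B(\8,0)$ and $B(\8,1)$ at that node; it also first recodes the defining formula as $\ex z\,(A(x,z)\land((z=0\land y=0)\lor(z>0\land y=1)))$ so that the output values are literally $0$ and $1$. You instead push everything into the classical world: via Lemma \ref{2l6.1} (exactly as in the proof of Theorem \ref{2t10}) you get $\IEx\vdash\us(B)$, and then contradict satisfaction in the classical model $\N\models\IEx$ (Lemma \ref{3d17}) using Lemma \ref{3l18}(1) together with the overspill Lemma \ref{3l100} directly; you also dispense with the recoding, which is legitimate since uniqueness makes the graph of the defining formula in $\nn$ exactly the graph of $\chi_{\mathcal A}$. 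The two arguments share the same mathematical core --- the structure $\N$ and overspill for positive formulas --- but yours avoids Kripke models altogether, while the paper's avoids the proof-theoretic transfer lemma; each is a reasonable trade. For the converse, the paper writes down the case-defined formula and declares its provable recursiveness in $\BA$ ``easy to see,'' whereas your verification --- routing both sequents through $\IEx$ by Lemma \ref{2l6.1}, proving existence by $\ex_1^+$-induction on the case distinction $\bigvee_{i\le N}x=i\lor N<x$, and correctly isolating the one delicate point, namely the ${\bf Q}$-style refutation of $N<\bar\imath$ for numerals $i\le N$ by successor-peeling and Q7 case splits (a blanket $x<x\Ra\bot$ being unavailable here) --- is a sound and more explicit filling-in of that claim, and it handles the finite and co-finite cases uniformly through the parameter $e$.
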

\begin{proof}
Let $\A$ be a provably decidable set in $\BA$, so
$\chi_\A$ is a provably total function in $\BA$. Then
there exists a $\Ex$ formula $ A(x,y)$ such that
\begin{itemize}
\item $\BA \vdash \es(A)$,
\item $\BA\vdash \us(A)$,
\item $\nn\models A(a,\chi_\A(a))$, for every $a\in\nn$.
\end{itemize}
If both $\A$ and $\cmt\A$ are infinite, we have
\begin{itemize}
\item $\nn\models \ex y\:(y>x \land A(x,0))$
\item $\nn\models \ex y\:(y>x \land A(x,1))$.
\end{itemize}
Then by Lemma \ref{3l18}, $\K \Vdash A(\8,0)$ and $\K\Vdash A(\8,1)$,
which show that $\BA\nvdash\us(A)$ and this
leads to a contradiction. Hence either $\A$ or $\cmt\A$ is finite.\\
Now suppose $\A\subseteq \nn$ is finite. Then
$\A$ has a maximum element, say $M$. Define the formula
$ A(x,y)\equiv (x>M \land y=0) \lor\bigvee\limits_{i=0}^M
(x=i \land y=\chi_\A(i))$. It is easy to
see that $ A(x,y)$ is $\Sig$ and defines $\chi_\A$ in $\BA$.
A similar argument works when $\cmt\A$ is finite.
\end{proof}

\begin{remark}\label{rm1}
\emph{Consider the primitive recursive function
$\Even(n)=\begin{cases}1 & n=2k\\0 & n=2k+1\end{cases}$.
If it is provably total recursive in $\BA$,
then $\A=\{a\in \nn | \Even(a)=1 \}$ is a
provably decidable set in $\BA$. Then by Theorem \ref{3t20},
$\A$ is either finite or co-finite, which leads to a
contradiction in either case. Hence $\Even(n)$ is not provably
total in $\BA$, which shows that $\PTRF(\BA)\ne\PR$.}
\end{remark}

One of the most important consequences of Theorem \ref{3t20} is
that given any (definable) pairing function, the corresponding projection
function is not provably total recursive in $\BA$. In the following Corollary,
think of $C(x,y,z)$ and $D(x,y)$ as formulas defining the graphs of a pairing
function and projection on the first entry, respectively.
\begin{corollary}
\label{3c22} There are no formulas $C(x,y,z)$ and
$D(x,y)$ with presented free variables such that:
\begin{enumerate}
\item $\nn \models \ex z \, C(x,y,z)$,
\item $\nn \models C(x,u,z) \land C(x,v,z)\Ra u=v$,
\item $\nn \models C(x,y,z)\Ra D(z,x)$,
\item $\BA \vdash \es(D)$,
\item $\BA \vdash \us(D)$.
\end{enumerate}
\end{corollary}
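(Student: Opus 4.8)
The plan is to reproduce, almost verbatim, the overspill argument behind Theorem~\ref{3t20}, the only novelty being the use of conditions~1 and~2 to manufacture two unbounded ``fibres'' of the decoding formula. Suppose toward a contradiction that formulas $C(x,y,z)$ and $D(x,y)$ satisfying conditions~1--5 exist. The goal is to exhibit a single infinite code together with two distinct outputs forced on it, thereby breaking the uniqueness sequent $\us(D)$ inside the model $\K$. I expect condition~4 to be unnecessary; only conditions~1,~2,~3 and~5 will be used.

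First I would fix the first entry to be $0$ and argue that the set of its codes is infinite. By condition~1, for every $y\in\nn$ there is some $z_y$ with $\nn\models C(0,y,z_y)$, and then condition~3 yields $\nn\models D(z_y,0)$. Condition~2 forces these codes to be pairwise distinct: if $z_y=z_{y'}$, then $\nn\models C(0,y,z_y)\land C(0,y',z_y)$ gives $y=y'$. (This is exactly the second-coordinate injectivity that any genuine pairing function has, now read off from the axioms rather than assumed.) Hence $\{z\in\nn\mid\nn\models D(z,0)\}$ is infinite, so $\nn\models\ex y\,(y>x\land D(y,0))$, and Lemma~\ref{3l18}(2) delivers $\K\Vdash D(\8,0)$. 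Running the identical argument with first entry $1$ gives $\K\Vdash D(\8,1)$.

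It then remains to harvest the contradiction from condition~5. By soundness (Proposition~\ref{sc}), $\BA\vdash\us(D)$ implies $\K\Vdash\us(D)$, where $\us(D)$ is the sequent $D(x,u)\land D(x,v)\Ra u=v$. Forcing of a \emph{sequent} at the unique node $k$ of $\K$ ranges over all $k'\succeq k$, so --- crucially --- we may evaluate at $k$ itself even though $k$ is irreflexive, instantiating the code $x$ by $\8$ and taking $u=0$, $v=1$ from $D(k)=\N$. Since $k\Vdash D(\8,0)\land D(\8,1)$ by the previous paragraph, we obtain $k\Vdash 0=1$, i.e.\ $\N\models 0=1$, which is absurd. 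Therefore no such $C$ and $D$ can exist.

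The hard part, and the one point demanding care, is the logic at the irreflexive node: it is essential that $\us(D)$ is handled as a sequent rather than as the implication formula $D(x,u)\land D(x,v)\ra u=v$, since the latter would be forced \emph{vacuously} at an irreflexive node and produce no contradiction at all. Apart from this, and from the bookkeeping needed to read Lemma~\ref{3l18}(2) with the code variable sent to $\8$, the whole argument is a transcription of the proof of Theorem~\ref{3t20}.
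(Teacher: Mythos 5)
Your proof is correct, but it takes a more direct route than the paper's. The paper reduces the corollary to Theorem~\ref{3t20}: from $D$ it builds the auxiliary formula $B(x,y)\equiv\ex z\,(D(x,z)\land((z=0\land y=1)\lor(z>0\land y=0)))$, uses conditions~4 and~5 to show $\BA\vdash\es(B)$ and $\BA\vdash\us(B)$, invokes Theorem~\ref{2t9} to conclude that $\mathcal{A}_0=\{a\in\nn\mid\nn\models D(a,0)\}$ is provably decidable, and then contradicts Theorem~\ref{3t20} because $\mathcal{A}_0$ and $\mathcal{A}_1\subseteq\mathcal{A}_0^c$ are both infinite. You instead unfold the proof of Theorem~\ref{3t20} and run it on $D$ itself: after establishing the infinitude of the fibres $\mathcal{A}_0$ and $\mathcal{A}_1$ exactly as the paper does (conditions 1--3), you apply Lemma~\ref{3l18}(2) directly to get $\K\Vdash D(\8,0)$ and $\K\Vdash D(\8,1)$, and contradict $\K\Vdash\us(D)$. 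Your route buys three things: condition~4 is never used (the paper's reduction needs it to prove $\es(B)$ and to verify $\nn\models B(a,\chi_{\mathcal{A}_0}(a))$); no auxiliary characteristic-function formula or appeal to Theorems~\ref{2t9} and~\ref{3t20} is needed; and your observation that the contradiction hinges on $\us(D)$ being a \emph{sequent} --- whose forcing quantifies over $k'\succeq k$ and hence is evaluated at the irreflexive node itself, unlike the implication $D(x,u)\land D(x,v)\ra u=v$, which is vacuously forced there --- is precisely the right point of care, and is the same mechanism the paper exploits inside Theorem~\ref{3t20}. What the paper's route buys is brevity, treating Theorems~\ref{2t9} and~\ref{3t20} as black boxes, at the cost of the extra hypothesis. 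One small citation gap in your write-up: soundness (Proposition~\ref{sc}) alone does not yield $\K\Vdash\us(D)$; you also need that $\K$ is a model of $\BA$, which is Lemma~\ref{3d17}(2) --- the same fact the proof of Theorem~\ref{3t20} uses implicitly.
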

\begin{proof}
Let $\A_n=\{ a\in \nn| \nn\models D(a,n)\}$.
By (1) and (3) $\A_0$ and $\A_1$
are not empty and because of (2) they are infinite.
Define $B(x,y)\equiv\ex z\:(D(x,z)\land((z=0\land y=1)\lor(z>0\land y=0)))$.
Then by (4) and (5) we have:
\begin{itemize}
\item $\BA\vdash \es(B)$,
\item $\BA \vdash \us(B)$,
\item $\nn\models B(a,\chi_{\A_0}(a))$, for every $a\in\nn$.
\end{itemize}
That means $\A_0$ is a provably decidable set in
$\BA$, and then by Theorem \ref{3t20}, $\A_0$ is either finite
or co-finite. But $\A_0$ and $\A_1\subseteq\cmt\A_0$
are infinite which leads to a contradiction.
\end{proof}

\begin{corollary}\label{cCutOff}
The cut-off subtraction is not provably total in $\BA$.
\end{corollary}
\begin{proof}
Suppose the cut-off subtraction is defined in $\BA$ by a formula
$A(x,y,z)$, which by Theorem \ref{2t9} can be assumed to be geometric.
By definition of the cut-off subtraction we have:
\begin{itemize}
\item ${\nn}\models \ex y(y>x \land A(y,y,0))$,
\item ${\nn}\models \ex y(y>x \land A(Sy,y,1))$.
\end{itemize}
Hence by Lemma \ref{3l18}, $\K\Vdash A(\8,\8,0)$ and also
$\K\Vdash A(S\8,\8,1)$. By the fact that $\K\Vdash S\8=\8$,
we have $\K\nVdash \us(A)$ which shows $\BA\nvdash \us(A)$
and this leads to a contradiction. Hence the cut-off subtraction
is not provably total in $\BA$.
\end{proof}

\begin{corollary}
\label{prime}
Suppose $P(x)$ is a formula that defines prime numbers
(or even an infinite subset of prime numbers). 
Then $\BA\nvdash P(x)\land y|x\Ra y=1\lor y=x$, where
$s|t\equiv\ex z(s\cdot z=t)$, with $z$ being a fresh variable.
\end{corollary}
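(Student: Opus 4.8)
The plan is to refute the sequent semantically in the Kripke model $\K$ and then invoke soundness. Assume toward a contradiction that $\BA\vdash P(x)\land y|x\Ra y=1\lor y=x$. Since $\K$ is a model of $\BA$ (Lemma \ref{3d17}), soundness (Proposition \ref{sc}) gives $\K\Vdash P(x)\land y|x\Ra y=1\lor y=x$. The first thing I would do is unwind what this means at the single, irreflexive node $k$ of $\K$ whose attached structure is $\N$. Because a sequent $A\Ra B$ is evaluated using the reflexive closure $\succeq$, at the lone node $k$ the clause reduces to: for all $a,b\in\N$, if $k\Vdash P(a)\land b|a$ then $k\Vdash b=1\lor b=a$. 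The strategy is to produce a pair $(a,b)=(\8,2)$ that violates this implication.

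Next I would force the antecedent at $\8$. Since $P$ defines an infinite set of primes, the set $\{n\in\nn:\nn\models P(n)\}$ is unbounded, so $\nn\models\ex x(x>z\land P(x))$ for a fresh variable $z$; applying Lemma \ref{3l18}(2) (after renaming so that the overspilled variable is the free variable of $P$) yields $\K\Vdash P(\8)$. Crucially, this step does not require $P$ to be positive: that is precisely what Lemma \ref{3l18}(2) provides, since it passes through the positive part $P^\ex$, and the single irreflexive node makes $P\Lr P^\ex$ forced. For the divisibility conjunct, observe that $2=SS0$ divides $\8$ in $\N$, with witness $\8$ itself, because $(1+1)\cdot\8=\8$. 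As $2|\8$ abbreviates the positive existential formula $\ex x(SS0\cdot x=\8)$ and the node $k$ carries the classical structure $\N$, we get $k\Vdash 2|\8$, and hence $k\Vdash P(\8)\land 2|\8$.

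It then remains to check that the consequent fails for this instance. Instantiating $x:=\8$ and $y:=2$, the forced sequent would demand $k\Vdash 2=1\lor 2=\8$. But in $\N=\nn\cup\{\8\}$ the elements $1,2,\8$ are pairwise distinct, so $k\not\Vdash 2=1$ and $k\not\Vdash 2=\8$, whence $k\not\Vdash 2=1\lor 2=\8$. This contradicts $\K\Vdash P(x)\land y|x\Ra y=1\lor y=x$, so we conclude $\BA\nvdash P(x)\land y|x\Ra y=1\lor y=x$.

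The only genuinely delicate point is forcing $P(\8)$ for a possibly non-positive defining formula $P$, and this is handled cleanly by Lemma \ref{3l18}(2); everything else is a routine unwinding of the forcing clauses at the single irreflexive node together with the arithmetic of $\8$ (namely that every nonzero element, in particular $2$, divides $\8$, while $2$ is neither $1$ nor $\8$). I would also remark that primality of the elements defined by $P$ is not actually used in the refutation—only the infiniteness of $\{n:\nn\models P(n)\}$ matters—but primality is exactly what makes the unprovable sequent a \emph{true} statement of arithmetic, which is the content of the corollary.
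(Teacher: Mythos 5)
Your proof is correct and takes essentially the same route as the paper's: both refute the sequent in the single-node irreflexive Kripke model $\K$, using Lemma \ref{3l18}(2) to force $P(\8)$ from the infinitude of $\{n : \nn\models P(n)\}$, observing that $\K\Vdash 2|\8$ since $2\cdot\8=\8$, and noting that $2=1\lor 2=\8$ fails in $\N$. Your version merely spells out the soundness step and the forcing clauses at the irreflexive node in more detail than the paper does.
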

\begin{proof}
Because $\nn\models \ex y(y>x \land P(y))$, by Lemma \ref{3l18},
we have $\K\Vdash P(\infty)$. Moreover, $\K\Vdash 2|\infty$,
and hence $\K\nVdash P(\infty)\land 2|\infty \Ra 2=1\lor 2=\infty$.
Thus $\BA\nvdash P(x)\land y|x\Ra y=1\lor y=x$.
\end{proof}

We close the discussion about the provably total functions of $\BA$ by
showing that the predecessor function, defined with $\pd(0)=0$ and
$\pd(S(x))=x$, is provably total recursive in $\BA$. \emph{This will give
an explicit instance of primitive recursion applied to a provably
total recursive function of $\BA$ resulting in a function not
provably total in $\BA$. In other words, it shows that
the class of provably total recursive functions of $\BA$ is not closed
under primitive recursion.} Note that the cut-off subtraction can
be defined by iterating the predecessor function: $x\dotminus0=x$
and $x\dotminus S(y)=\pd(x\dotminus y)$. In the next proposition,
note that since $\nn\models\BA$, the formula considered is actually
defining the predecessor function.

\begin{proposition}
\label{pd}
There is a geometric open formula $\p(x,y)$ such that:
\begin{itemize}
\item
$\BA\vdash\p(0,0)$
\item
$\BA\vdash\p(Sx,x)$
\item
$\BA\vdash\es(\p)$
\item
$\BA\vdash\us(\p)$
\end{itemize}
\end{proposition}
\begin{proof}
Let $\p(x,y)\equiv(x=0\land y=0)\lor x=Sy$. The first two items
follow immediately. The third item follows from
$\BA\vdash x=0\lor\ex y\,(x=Sy)$. The last item is a cosequence of
$\BA\vdash Sy=0\Ra\bot$ and $\BA\vdash Sy=Sz\Ra y=z$.
\end{proof}

In the remaining part of this subsection, we analyze $\BQC$ and $\BA$
on a level beyond what can be done inside $\GQC$ and $\GA$.
The main method we have used until now ignores the additional expressive power
of $\BA$ in comparison to $\GA$. To emphasize on this aspect of $\BA$,
let's consider \emph{semi-geometric} formulas, in which every subformula
of the form $\all\x(A\ra B)$ is such that $A$ and $B$ are geometric. In other
words, semi-geometric formulas are those in which there are no nested
implications/universal quantifications. A sequent $A\Ra B$ is called
\emph{semi-geometric} whenever both $A$ and $B$ are semi-geometric formulas.
A rule is called \emph{semi-geometric} whenever its lower sequent and all its
upper sequents are semi-geometric. We show that studying
semi-geometric formulas can give us some results on $\BA$
which are similar to, yet different than, what we have already proven
about $\BA$ using geometric formulas. To give concrete examples, we
consider certain modifications of the notion of provably total functions.
These modifications are equivalent to what we have previously considered,
if one is working over $\HA$ or $\PA$, but not over $\BA$. We first
show that semi-geometric formulas give us ways to recover some properties
of provably total functions of $\BA$ in the modified senses, which are
analogous to our previous results, yet cannot be derived by only looking at
geometric formulas. We then consider the weaker theory $\BAw$, and apply
both methods (involving geometric and semi-geometric formulas) to
study its provably total functions in different senses. Unlike
what was obtained for $\BA$, the results on $\BAw$ will \emph{not} be
all similar to one another, which gives another example of the
contrast between basic arithmetic and geometric arithmetic.

\begin{definition}
\label{2d12}
For a formula $A$, the \emph{semi-geometric part} of $A$ is denoted
by $\sgp A$ and is defined recursively as follows:
\begin{itemize}
\item $\sgp A\equiv A$ if $A$ is prime,
\item $\sgp A\equiv\sgp B\circ\sgp C$ if $A$ is of the form
$B\circ C$ and $\circ$ is $\lor$ or $\land$,
\item $\sgp A\equiv\ex u\,\sgp B$ if $ A$ is of the form $\ex u\,B$,
\item $\sgp A\equiv\all\x(\gp B\ra\gp C)$ if $A$ is of the
form $\all\x(B\ra C)$.
\end{itemize}
$\sgp{(A\Ra B)}$ is defined as $\sgp A\Ra\sgp B$.
For a rule $R$, $\sgp R$ is defined as the rule obtained by replacing
any of the upper and lower sequents with its semi-geometric part.
\end{definition}
\begin{proposition}
\label{2c13}
$ $
\begin{enumerate}
\item
For any formula $A$, $\sgp A$ is semi-geometric.
\item
For any formula $A$, $\gp{\left(\sgp A\right)}=\gp A$.
\item
If $A$ is semi-geometric, $\sgp A=A$.
\item
If $A$ is semi-geometric, $\BQC\vdash A\Ra\gp A$.
Moreover, it can be so that only semi-geometric formulas appear
in the derivation of $A\Ra\gp A$ in $\BQC$.
\end{enumerate}
\end{proposition}
\begin{proof}
Straightforward by induction on the complexity of $A$.
\end{proof}
\begin{proposition}
\label{2c14}
Let $\Gamma$ be a set of geometric sequents.
\begin{enumerate}
\item
If $\BQC+\Gamma\vdash\alpha$ then $\BQC+\Gamma\vdash\sgp\alpha$.
Moreover, it can be so that only semi-geometric formulas appear in the
derivation of $\sgp\alpha$ from $\Gamma$ in $\BQC$.
\item
If $\BA+\Gamma\vdash\alpha$ then $\BA+\Gamma\vdash\sgp\alpha$.
Moreover, it can be so that only semi-geometric formulas appear in the
derivation of $\sgp\alpha$ from $\Gamma$ in $\BA$.
\end{enumerate}
\end{proposition}
\begin{proof}
$ $
\begin{enumerate}
\item
We prove the claim by a structural induction on the derivation of $\alpha$
from $\Gamma$ in $\BQC$. If $\alpha$ is an axiom, then $\sgp\alpha$ is a
semi-geometric instance of the same axiom. If $\alpha\in\Gamma$, then $\alpha$
is geometric and thus equal to its semi-geometric parts, which means
$\sgp\alpha\in\Gamma$. Thus the base case is proved. Now suppose that
$\alpha$ is proved by the rule 19. Then $\alpha$ is of the form
$A\Ra\all\x(B\ra C)$, and we have $\BQC+\Gamma\vdash A\land B\Ra C$
from the immediate sub-derivation. So by Proposition \ref{2p5},
$\GQC+\Gamma\vdash\gp A\land\gp B\Ra\gp C$ and by applying rule 19,
$\BQC+\Gamma\vdash\gp A\Ra\all\x(\gp B\ra\gp C)$. Then by item 4 of
Proposition \ref{2c13}, $\BQC+\Gamma\vdash\sgp\alpha$, with all the formulas
appearing in the derivation being semi-geometric. If $R$ is any other
rule of $\BQC$, $\sgp R$ is a semi-geometric instance of the same rule,
and the proof of the induction step is complete.
\item
The proof is similar to the previous item. One just needs to additionally
note that the axioms 20-25 are all geometric, and the semi-geometric part
of the induction axiom and the rule of induction are semi-geometric instances
of the same axiom and rule, respectively.\qedhere
\end{enumerate}
\end{proof}

The uniqueness sequent related to a formula $A$ can be defined in
different ways which are very close in meaning from the intuitionistic
and/or classical point of view, yet not so in the context of basic logic.
We list some examples of alternative definitions for $\us(A)$
in the following. Taking any of these definitions results in
a different notion of provably total functions of a theory.
\begin{itemize}
\item $\us_0(A)\equiv\all\x uv(A[y/u]\land A[y/v]\ra u=v)$,
where $\x y$ consists of all the free variables of $A$;
\item $\us_1(A)\equiv A[y/u]\land A[y/v]\ra u=v$;
\item $\us_2(A)\equiv\neg u=v\Ra\neg A[y/u]\lor\neg A[y/v]$;
\item $\us_3(A)\equiv A[y/u]\Ra A[y/v]\ra u=v$;
\item $\us_4(A)\equiv A[y/u]\land A[y/v]\Ra\top\ra(\top\ra u=v)$.
\end{itemize}
For $i=0,\dots,4$ and a theory $\T$, one can consider $\PTF_i(\T)$ and
$\PTRF_i(\T)$ by replacing $\us$ with $\us_i$ in the definition of
$\PTF(\T)$ and $\PTRF(\T)$, respectively.
By functionality and faithfulness of $\BA$ (Proposition
\ref{functional-well-formed} and Corollary \ref{bafaith}),
provability of $\us_0(A)$ is equivalent to provability of $\us(A)$ in $\BA$.
Consequently we get the same results regarding $\PTF_0(\BA)$ and
$\PTRF_0(\BA)$. We use the above Proposition to obtain some results about
$\PTF_i(\BA)$ and $\PTRF_i(\BA)$ for $i=1,2,3$.
However, the case of $\us_4$ may need a different treatment, as
$\sgp{(\top\ra(\top\ra u=v))}=\top\ra\top$, and the technique fails
to deliver what is required.

In the course of the proof of the next Theorem, we use the following Lemma
about $\ISig$. The proof of the Lemma is based on well-known methods
applicable in $\ISig$: introducing a new variable as a bound for
existentially quantified variables in a $\Sig$ formula to get a $\Dz$
variant, and using a pairing function to encode two variables into one.

\begin{lemma}
\label{isig-tot}
Let $\T$ be a theory extending $\ISig$ and $A(\x,y)$ be a $\Sig$ formula
such that $\T\vdash\es(A)$. Then there exists a $\Sig$ formula $B(\x,y)$
such that
\begin{itemize}
\item
$\T\vdash\es(B)$;
\item
$\T\vdash\us(B)$;
\item
$\T\vdash B\Ra A$.
\end{itemize}
\end{lemma}
\begin{proof}
Assume that $A(\x,y)$ is of the form $\ex\z D(\x,y,\z)$ where
$D$ is a $\Dz$ formula and $\z=(z_1,\dots,z_k)$. Let $C(x,y,z)$ be a
$\Dz$ formula which defines the Cantor pairing function
$\langle x,y\rangle=\frac12(x+y)(x+y+1)+y$ in $\T$ and moreover
\begin{itemize}
\item $\T\vdash\ex xyC(x,y,z)$;
\item $\T\vdash C(x,y,z)\land C(u,v,z)\Ra x=u\land y=v$.\footnote
{We can take $C(x,y,z)\equiv2z=(x+y)\cdot S(x+y)+2y$, and
then formalize the usual arguments for the desired properties
of the Cantor pairing function (totality and uniqueness properties,
as well as being bijective) inside $\ISig$.}
\end{itemize}
Define
$$E(\x,w)\equiv\ex uv\z(u\le w\land v\le w\land
\bigwedge_{i=1}^kz_i<v\land C(u,v,w)\land D(\x,u,\z))$$
and let $F(\x,w)\equiv E(\x,w)\land\all z(z<w\ra\neg E(\x,z))$.
Note that $F(\x,w)$ is provably equivalent in $\T$ to a $\Dz$ formula.
As $\T\vdash A(\x,y)\Ra\ex wE(\x,w)$ and $\T\vdash\es(A)$, we have
$\T\vdash\es(F)$ by the fact that $\ISig$ proves the least number principle
for $\Dz$ formulas. Also, by the way $F$ has been defined, $\T\vdash\us(F)$.
Finally, define $B(\x,y)\equiv\ex vw(C(y,v,w)\land F(\x,w))$.
It is easy to see that $B$ is provably equivalent in $\T$ to a $\Sig$ formula
and $\T\vdash B(\x,y)\Ra A(\x,y)$. The provability of the existence and
uniqueness sequents of $B$ follow from those of $F$.
\end{proof}

\begin{theorem}
\label{weaker-ptf}
Let $\Gamma$ be a set of geometric sequents such that $\nn\models\Gamma$.
\begin{enumerate}
\item
For $i=1,2$, if $\BA+\Gamma\vdash\us_i(A)$ then
$\BA+\Gamma\vdash\us_i(\gp A)$.
\item
If $\BA+\Gamma\vdash\us_3(A)$ then $\BA+\Gamma\vdash\us_3(\sgp A)$.
If additionally $\BA+\Gamma\vdash\es(A)$ then $\BA+\Gamma\vdash\es(\sgp A)$,
$\BA+\Gamma\vdash\us_1(\gp A)$ and $\BA+\Gamma\vdash\us_3(\gp A)$.
\item
For $i=1,2,3$,
$\PTF_i(\BA+\Gamma)=\PTRF_i(\BA+\Gamma)\subseteq\PTRF(\ISig+\Gamma)$.
\end{enumerate}
\end{theorem}
\begin{proof}
$ $
\begin{enumerate}
\item
Apply Proposition \ref{2c14} and note that
$\sgp{(\us_i(A))}\equiv\us_i(\gp A)$.
\item
If $\BA+\Gamma\vdash\us_3(A)$ then by Proposition \ref{2c14} we have
$\BA+\Gamma\vdash\sgp{(\us_3(A))}\equiv\sgp A[y/u]\Ra\gp A[y/v]\ra u=v$,
which together with item 4 of Proposition \ref{2c13} implies
$\BA+\Gamma\vdash\us_3(\sgp A)$. Assume that we additionally have
$\BA+\Gamma\vdash\es(A)$. First, note that by Proposition \ref{2c14},
we must have $\BA+\Gamma\vdash\es(\sgp A)$. This together with
$\BA+\Gamma\vdash\sgp{(\us_3(A))}$ shows that
$\BA+\Gamma\vdash\ex y(\gp A[y/u]\ra y=u)$. Consequently
$\BA+\Gamma\vdash\ex y(\gp A[y/u]\land\gp A[y/v]\ra y=u\land y=v)$,
which implies $\BA+\Gamma\vdash\us_1(\gp A)$. Also, since
$\BA+\Gamma\vdash\gp A[y/u]\Ra\gp A[y/v]\ra\gp A[y/u]\land\gp A[y/v]$,
we can conclude
$\BA+\Gamma\vdash\us_3(\gp A)\equiv\gp A[y/u]\Ra\gp A[y/v]\ra u=v$.
\item
Suppose $f:\nn^k\to\nn$ is in $\PTF_i(\BA+\Gamma)$; i.e. there exists
a formula $A(\x,y)$ such that:
\begin{itemize}
\item $\BA+\Gamma\vdash\es(A)$;
\item $\BA+\Gamma\vdash\us_i(A)$;
\item $\nn\models A(\a,f(\a))$, for every $\a\in\nn^k$.
\end{itemize}
Then, by the previous items and item 3 of Proposition \ref{2p2}, we have
\begin{itemize}
\item $\BA+\Gamma\vdash\es(\gp A)$;
\item $\BA+\Gamma\vdash\us_i(\gp A)$.
\end{itemize}
As $\nn\models\Gamma$, this means that $\gp A$ also defines a total function,
which by item 3 of Proposition \ref{2p2} must be the same as $f$.
Thus, $f$ is in $\PTRF_i(\BA+\Gamma)$, which shows
$\PTF_i(\BA+\Gamma)=\PTRF_i(\BA+\Gamma)$.

Now, note that $\BA+\Gamma\vdash\es(\gp A)$ implies
$\IEx+\Gamma\vdash\es(\gp A)$, using Lemma \ref{2l6.02}.
Thus $\ISig+\Gamma\vdash\es(\gp A)$, and by Lemma \ref{isig-tot},
we get a $\Sig$ formula $B(\x,y)$ such that
\begin{itemize}
\item $\ISig+\Gamma\vdash\es(B)$;
\item $\ISig+\Gamma\vdash\us(B)$;
\item $\ISig+\Gamma\vdash B\Ra\gp A$.
\end{itemize}
Thus, $B(\x,y)$ defines a total function, that must be the same as $f$.
Hence $f$ is in $\PTRF(\ISig+\Gamma)$, which completes the proof.
\end{enumerate}
\end{proof}

\begin{remark}
\label{sharpening}
The case $\Gamma=\varnothing$ of Theorem \ref{weaker-ptf} and the previous
observations show that $\PTF_i(\BA)\subseteq\PR$ for $i=0,1,2,3$,
by Theorem \ref{isig}. One can get the results $\PTF(\BA)\subseteq\PR$
and $\PTF_i(\BA)\subseteq\PR$ for $i=0,\dots,4$, by an
analysis of provability of the existence sequent, and completely
disregard the uniqueness sequent, as is done in \cite{S03}. See Theorem
\ref{3t14}, where we use that method for the even stronger theory $\EBA$.
\end{remark}

The importance of taking the provably total functions of $\BA$ in the sense
of $\us_1$ comes from the fact that the cut-off subtraction becomes provably
total. For the next proposition, note that since $\nn\models\BA$,
the formula considered is actually defining cut-off subtraction.

\begin{proposition}
\label{cut-off-ptf}
There is a geometric open formula $\co(x,y,z)$ such that:
\begin{itemize}
\item
$\BA\vdash x<y\Ra\co(x,y,0)$
\item
$\BA\vdash x=y+z\Ra\co(x,y,z)$
\item
$\BA\vdash\es(\co)$
\item
$\BA\vdash\us_1(\co)$
\end{itemize}
\end{proposition}
\begin{proof}
Let $\co(x,y,z)\equiv(x<y\land z=0)\lor x=y+z$.
The first two items follow immediately.
The third item is a consequence of $\BA\vdash x<y\lor y\le x$.
To prove the last item, we first use the rule of induction to show that
$\BA\vdash y+u=y+v\ra u=v$, noting that $\BA\vdash 0+u=0+v\ra u=v$ and
$\BA\vdash y+u=y+v\ra u=v\Ra Sy+u=Sy+v\ra u=v$.
Then, we use the rule of induction to show that $\BA\vdash\neg x<x$,
which together with the previous result, implies that
$\BA\vdash\co(x,y,u)\land\co(x,y,v)\ra u=v$.
\end{proof}

Next, we show how the analysis can be done for the weakened basic
arithmetic $\BAw$, in which the rule of induction is dropped from $\BA$.
For this, consider the \emph{Weakened Geometric Arithmetic}, $\GAw$,
in which the rule of geometric induction is dropped from $\GA$.
Note that $\GAw$ is a theory even weaker than Robinson arithmetic $\Q$;
not only its language is restricted to geometric formulas, but also it
lacks the axiom $x=0\lor\ex y\,x=Sy$. An important property of $\GAw$
is that its arithmetical axioms contain only (geometric) open formulas.
As geometric logic is a subtheory of classical logic, this lets us 
use well-known classical methods for analyzing $\GAw$,
most notably the famous Herbrand's theorem. In the next Proposition,
we use these facts to characterize the provably total functions of $\GAw$,
which will become handy in our investigation of the provably total
functions of $\BAw$.

\begin{proposition}
\label{gaw-prov-tot}
Let $\Gamma$ be a set of sequents of (geometric) open formulas.
\begin{enumerate}
\item
If $\GAw+\Gamma\vdash\ex y\ex\z A(\x,y,\z)$ for some (geometric) open formula
$A(\x,y,\z)$, then there exists a (geometric) open formula $B(\x,y)$ and
a finite list $s_1(\x),\dots,s_r(\x)$ of terms such that
$\GAw+\Gamma\vdash B(\x,y)\Ra\ex\z A(\x,y,\z)$ and
$\GAw+\Gamma\vdash\bigvee_{i=1}^rB(\x,s_i(\x))$.
\item
If $\nn\models\Gamma$, then provably total functions of $\GAw+\Gamma$
are definable in $\GAw+\Gamma$ by (geometric) open formulas.
\end{enumerate}
\end{proposition}
\begin{proof}
$ $
\begin{enumerate}
\item
As already discussed in the proof of Lemma \ref{2l6.02}, $\GAw$ can be
fomalized by the fragment of $\LK$ without the rules for implication
and universal quantification, together with the axioms of equality
and arithmetic (see the Appendix), in all of which only quantifier-free
formulas appear. By Herbrand's theorem (Theorem 2.5.1 of \cite{B98}),
this implies that if $\GAw+\Gamma\vdash\ex y\ex\z A(\x,y,\z)$, then
there are finite lists $s_1(\x),\dots,s_r(\x)$ of terms and
$\t_1(\x),\dots,\t_r(\x)$ of sequences of terms with the same length as $\y$,
such that $\GAw+\Gamma\vdash\bigvee_{i=1}^r A(\x,s_i(\x),\t_i(\x))$.
Letting $B(\x,y)\equiv\bigvee_{i=1}^r A(\x,y,\t_i(\x))$, it is
straightforward to verify that $B(\x,y)$ has the desired properties.
\item
Let $f:\nn^n\to\nn$ be defined in $\GAw+\Gamma$ by a (geometric) formula
$A(\x,y)$. Without loss of generality, $A(\x,y)$ can be considered to be
$\Ex$. Choosing $B(\x,y)$ as in the previous item, we can see that
$\GAw+\Gamma$ proves the existence and uniqueness sequents of $B(\x,y)$,
since it does so for $A(\x,y)$. Since $\nn\models\GAw+\Gamma$,
this means that $B(\x,y)$ defines a function $g:\nn^n\to\nn$.
By the provability of the uniqueness sequent of $B(\x,y)$ and the fact that
$\GAw+\Gamma\vdash B(\x,y)\Ra A(\x,y)$, it turns out
that $g$ is the same function as $f$, and thus $f$ is definable in
$\GAw+\Gamma$ by the (geometric) open formula $B(\x,y)$.
\end{enumerate}
\end{proof}

\begin{proposition}
\label{baw-geo}
Let $\Gamma$ be a set of sequents.
\begin{enumerate}
\item
If $\BAw+\Gamma\vdash\alpha$ then $\GAw+\gp\Gamma\vdash\gp\alpha$.
Consequently, $\gp{\left(\BAw\right)}\vdv\GAw$,
and $\BAw$ is conservative over $\GAw$.
\item
Assuming that every sequent in $\Gamma$ is geometric,
if $\BAw+\Gamma\vdash\alpha$ then $\BAw+\Gamma\vdash\sgp\alpha$.
Moreover, it can be so that only semi-geometric formulas appear in the
derivation of $\sgp\alpha$ from $\Gamma$ in $\BAw$.
\end{enumerate}
\end{proposition}
\begin{proof}
$ $
\begin{enumerate}
\item
Same as the proof of item 2 of Proposition \ref{2p5},
without considering the rule of induction.
\item
Same as the proof of item 2 of Proposition \ref{2c14},
without considering the rule of induction.\qedhere
\end{enumerate}
\end{proof}

\begin{proposition}
\label{baw-prov-tot}
Let $\Gamma$ be a set of sequents of geometric open formulas.
\begin{enumerate}
\item
If $\BAw+\Gamma\vdash\ex y\ex\z A(\x,y,\z)$ for some geometric open formula
$A(\x,y,\z)$, then there exists a geometric open formula $B(\x,y)$ and
a finite list $s_1(\x),\dots,s_r(\x)$ of terms such that
$\BAw+\Gamma\vdash B(\x,y)\Ra\ex\z A(\x,y,\z)$ and
$\BAw+\Gamma\vdash\bigvee_{i=1}^rB(\x,s_i(\x))$.
\item
If $\nn\models\Gamma$, then provably total functions of $\BAw+\Gamma$
(in the sense of the original formulation for the uniqueness sequents)
are definable in $\BAw+\Gamma$ by geometric open formulas.
\end{enumerate}
\end{proposition}
\begin{proof}
Combine Proposition \ref{gaw-prov-tot} with item 1 of
Proposition \ref{baw-geo}.
\end{proof}

\begin{theorem}
\label{baw-ptf}
Let $\Gamma$ be a set of geometric sequents such that $\nn\models\Gamma$.
\begin{enumerate}
\item
If $\BAw\vdash\us(A)$ then $\GAw\vdash\us(\gp A)$;
$\PTRF(\BAw+\Gamma)=\PTF(\BAw+\Gamma)=\PTF(\GAw+\Gamma)=\PTRF(\GAw+\Gamma)$.
\item
For $i=0,1,2$, if $\BAw+\Gamma\vdash\us_i(A)$ then
$\BAw+\Gamma\vdash\us_i(\gp A)$;
$\PTF_i(\BAw+\Gamma)=\PTRF_i(\BAw+\Gamma)\subseteq\PTRF(\IEx+\Gamma)$.
\item
If $\BAw\vdash\us_3(A)$ then $\BAw\vdash\us_3(\sgp A)$;
$\PTRF_3(\BAw)\subseteq\PTRF(\IEx+\Gamma)$.
\end{enumerate}
\end{theorem}
\begin{proof}
The first item can be proven by an argument similar to the proof of
Theorem \ref{2t9}, only using item 1 of Proposition
\ref{baw-geo} instead of Proposition \ref{2p5}.
The other items can be proven by an argument similar to the proof of
Theorem \ref{weaker-ptf}, only using item 2 of Proposition
\ref{baw-geo} instead of Proposition \ref{2c14}. Just note that
this time the induction formulas are only appearing in the induction
axioms, which means that they are geometric, as the induction axiom
appearing in the derivation is supposed to be semi-geometric.
\end{proof}

Note that we have also included $i=0$ in the second item
of the above Theorem. In fact, the provabilities of
$\us_0(A)$ and $\us(A)$ are not equivalent in
$\BAw$, unlike $\BA$. That is because $\BAw$ is not faithful,
which we now set out to prove.

\begin{lemma}
\label{wll-tt}
Let $\T$ and $\T'$ be theories in a first-order language such that
\begin{itemize}
\item
$\T'$ extends $\BQC$;
\item
for any $n$ (including $0$), any seuqnce
$\all\x(A_0\ra B_0),\all\x(A_1\ra B_1),\dots,\all\x(A_n\ra B_n)$
of sentences such that
\AXC{$A_1\Ra B_1$}\AXC{$\dots$}\AXC{$A_n\Ra B_n$}\TIC{$A_0\Ra B_0$}\DP
is a rule of $\T$ (or in case $n=0$, $A_0\Ra B_0$ is an axiom of $\T$),
and any formula $A$ no free variable of which appears in $\x$,
$\T'\vdash\bigwedge_{i=1}^n\all\x(A\land A_i\ra B_i)
\Ra\all\x(A\land A_0\ra B_0)$.
\end{itemize}
Then for any seuqnce
$\all\x(A_0\ra B_0),\all\x(A_1\ra B_1),\dots,\all\x(A_n\ra B_n)$
of sentences such that $\T+\{A_i\Ra B_i\}_{i=1}^n\vdash A_0\Ra B_0$,
and any formula $A$ no free variable of which appears in $\x$,
$\T'\vdash\bigwedge_{i=1}^n\all\x(A\land A_i\ra B_i)
\Ra\all\x(A\land A_0\ra B_0)$.
\end{lemma}
\begin{proof}
The proof is essentially the same as the one showing that
a theory with a well-formed axiomatization is well-formed
(Proposition 4.13 of \cite{R98}). Note that the assumptions
on $\T'$ made explicit in the statement of this Lemma are
exactly what is needed for the proof to work.
\end{proof}

\begin{corollary}
\label{baw-vs-ba}
Let $A$ and $B$ be formulas in the language of arithmetic.
The following are equivalent:
\begin{enumerate}
\item
$\BA\vdash A\Ra B$;
\item
$\BAw\vdash\all\x(A\ra B)$ for some sequence of variables $\x$
containing all the free variables of $A$ and $B$;
\item
$\BAw\vdash\all\x(A\ra B)$ for all sequences of variables $\x$.
\end{enumerate}
\end{corollary}
\begin{proof}
To prove (2) assuming (1), apply Lemma \ref{wll-tt} with $\T=\BA$ and
$\T'=\BAw$. Note that other than the rule of induction, all the axioms
and rules of $\BA$ are already axioms and rules of $\BAw$, and the
presence of the induction axiom in $\BAw$ covers all that is needed for
applying the Lemma. (3) follows from (2) by using axiom 17, and (1)
follows from (3) by the faithfulness of $\BA$ (Corollary \ref{bafaith})
and the fact that $\BA$ extends $\BAw$.
\end{proof}

\begin{corollary}
\label{baw-ba-ptf}
$ $
\begin{enumerate}
\item
$\BAw\vdash\all\x\ex yA(\x,y)$ iff $\BA\vdash\es(A(\x,y))$.
\item
$\BAw\vdash\us_0(A)$ iff $\BA\vdash\us(A)$.
\end{enumerate}
\end{corollary}
\begin{proof}
Straightforward by Corollary \ref{baw-vs-ba}.
\end{proof}

The fact that $\BAw$ is not faithful comes from the fact that the provability
of $\all\x\ex yA(\x,y)$ in the first item of Corollary \ref{baw-ba-ptf}
cannot be strengthened to the provability of $\es(A(\x,y))$. To see this,
we need to consider a new model for $\BAw$, as in the following definition.

\begin{definition}
\label{nt}
$ $
\begin{itemize}
\item
$\Rzp$ is the classical structure of the set of non-negative real numbers.
The symbols $0$, $+$, $\cdot$ and $<$ of the language of arithmetic are
interpreted standardly. The symbol $S$ is interpreted as $Sa=a+1$
for every $a\in \Rzp$.

\item
$\kzp$ is the Kripke model with just one irreflexive
node with the structure $\Rzp$.
\end{itemize}
\end{definition}
\begin{lemma}
\label{nt-models-gaw}
$ $
\begin{enumerate}
\item
$\Rzp$ is a (classical) model of $\GAw$. $\kzp$ is a model of $\BAw$.
\item
$\Rzp\not\models x=0\lor\ex y\,x=Sy$.
$\kzp\nVdash x=0\lor\ex y\,x=Sy$.
\item
$\GAw\nvdash x=0\lor\ex y\,x=Sy$.
$\BAw\nvdash x=0\lor\ex y\,x=Sy$.
\end{enumerate}
\end{lemma}
\begin{proof}
$ $
\begin{enumerate}
\item
It is straightforward to verify that $\Rzp$ satisfies the arithmetical axioms
of $\GAw$, and hence $\Rzp\models\GAw$. As $\kzp$ is a single-node irreflexive
Kripke model, we have $\kzp\Vdash A\Lr\gp A$ for all formulas $A$,
which implies $\kzp\Vdash\BAw$.
\item
$\frac12$ is neither equal to $0$ nor to a successor of any element of $\Rzp$.
\item
By combining the previous items.
\end{enumerate}
\end{proof}

\begin{corollary}
\label{baw-not-faith}
$ $
\begin{enumerate}
\item
$\BAw$ is not faithful.
\item
$\BAw$ is neither reflexively rooted nor irreflexively rooted.
\end{enumerate}
\end{corollary}
\begin{proof}
$ $
\begin{enumerate}
\item
By Proposition \ref{pd} item 1 of Corollary \ref{baw-ba-ptf},
$\BAw\vdash\all\x\ex y\p(x,y)$. If $\BAw$ is faithful,
we must have $\BAw\vdash\es(\p)$, which contradicts
item 3 of Lemma \ref{nt-models-gaw}.
\item
By the previous item and Proposition \ref{fwr-dep}.\qedhere
\end{enumerate}
\end{proof}

Looking at the proof of item 1 of Corollary \ref{baw-not-faith},
we can see that $\PTF_0(\BAw)\subsetneq\PTRF(\BA)$. By item 2 of
Corollary \ref{baw-ba-ptf}, this is due to unprovability of some
existence sequents, and not that of uniqueness sequents. That
shows the important role of the rule of induction of $\BA$ in
the provability of totality of functions.

As a closing remark, we note that while $\BAw$ lacks rootedness,
which was the main tool for proving disjunction and existence properties
for theories like $\BA$ and $\HA$ (see Proposition \ref{fwr-dep}),
it still satisfies some versions of these properties.
The question of $\BAw$ having disjunction and existence properties
is still open to us.

\begin{proposition}
\label{baw-disj-ex}
$ $
\begin{enumerate}
\item
For all sentences $A$ and $B$, $\BAw\vdash\top\ra A\lor B$
implies $\BAw\vdash\top\ra A$ or $\BAw\vdash\top\ra B$.
\item
For all sentences $\ex xA$, $\BAw\vdash\top\ra\ex xA$ implies
$\BAw\vdash\top\ra A[x/t]$ for some cosed term $t$.
\end{enumerate}
\end{proposition}
\begin{proof}
$ $
\begin{enumerate}
\item
If $\BAw\vdash\top\ra A\lor B$, then by Corollary \ref{baw-vs-ba},
$\BA\vdash A\lor B$. By the disjunction property of $\BA$,
we have either $\BA\vdash A$ or $\BA\vdash B$, which using Corollary
\ref{baw-vs-ba} again, implies $\BAw\vdash\top\ra A$ or $\BAw\vdash\top\ra B$.
\item
If $\BAw\vdash\top\ra\ex xA$, then by Corollary \ref{baw-vs-ba},
$\BA\vdash\ex xA$. By the existence property of $\BA$,
we have $\BA\vdash A[x/t]$ for some clodes term $t$, which using
Corollary \ref{baw-vs-ba} again, implies $\BAw\vdash\top\ra A[x/t]$.
\end{enumerate}
\end{proof}
\subsection{The Provably Total Recursive Functions of Extensions of $\BA$}
In the last section, we showed that the provably total recursive functions of
$\BA$ are primitive recursive, however there are some primitive recursive
functions that are not provably total in $\BA$. In this section, we consider
three extensions of $\BA$ that their provably total recursive functions are
exactly the primitive recursive functions. The first extension is by adding
the cancellation law (the axiom $\U:x+y=x+z\Ra y=z$) to $\BA$, the second one
is obtained by adding a symbol for the cut-off subtraction to the language of
$\BA$, and the third one is the theory $\EBA$, introduced in \cite {AH}.
As is known, the cut-off subtraction is a primitive recursive function,
and it turns out that adding a symbol for it to the language of $\BA$,
and its properties as additional axioms, will result then that the provably
total recursive functions of this extension captures all the primitive
recursive functions. We show that the first two extensions of $\BA$ coincide
in some sense, see Theorem \ref{BAUC}. The theory $\EBA$ that is an extension
of $\BA$ by adding the sequent axiom $\top\ra\bot\Ra\bot$, is a stronger theory
than the previous two extensions of $\BA$, and is very close to $\HA$ in some
respects, however still weaker than that. For more details on motivations and
some properties of $\EBA$, that we will use in this paper, see \cite{AH}.

We start with adding the axiom $\U$ to our theories. The name ``$\U$"
comes from the equivalence of the cancellation law of addition and
the uniqueness sequent of the cut-off subtraction, $\us(\co)$. We take a look
at this relation in the next Lemma. We say that two sequents $\alpha$
and $\beta$ are equivalent over a theory $\T$, whenever
$\T+\alpha\vdash\beta$ and $\T+\beta\vdash\alpha$. It turns out that
$\U$ is equivalent to the only axiom of $\PAm$ that is not provable in $\BA$.

\begin{lemma}
\label{U-equiv}
The following sequents are equivalent over $\BA$.
\begin{enumerate}
\item
$\co(x,y,z)\land\co(x,y,w)\Ra z=w$
\item
$x+y=x+z\Ra y=z$
\item
$x+y=x\Ra y=0$
\item
$S(y+x)=x\Ra\bot$
\item
$x<x\Ra\bot$
\end{enumerate}
\end{lemma}
\begin{proof}
By definition of $\co$ and logical rules and axioms, 1 is equivalent to
$$(x<y\land z=0\land w=0)\lor(x<y\land z=0\land x=y+w)\lor
(x<y\land x=y+z\land w=0)\lor(x=y+z\land x=y+w)\Ra z=w\text,$$
which in particular implies provability of $x=y+z\land x=y+w\Ra z=w$ and
therefore 2. 3 follows from substituting $0$ for $z$ in 2. To get 4 from 3,
we note that $\BA$ proves $y+x=x+y$, $S(x+y)=x+Sy$ and $Sy=0\Ra\bot$.
5 is equivalent to $\ex y\,(x+Sy=x)\Ra\bot$, and it follows from
$x+Sy=x\Ra\bot$, and therefore from 4. To get 1 from 5, consider the above
equivalent for 1. It suffices to prove $\BA+x<x\Ra\bot\vdash A\Ra z=w$
for all the four $A$ that are disjuncts of the left-hand side. Clearly
$\BA\vdash z=0\land w=0\Ra z=w$. Since $\BA\vdash w=0\lor\ex u\, w=Su$,
we have $\BA\vdash y\le y+w$. Hence $\BA\vdash x<y\land x=y+w\Ra y<y$,
and therefore $\BA+x<x\Ra\bot\vdash x<y\land x=y+w\Ra z=w$, and similarly,
$\BA+x<x\Ra\bot\vdash x<y\land x=y+z\Ra z=w$. Finally, note that
$\BA\vdash z<w\lor z=w\lor z>w$. As $\BA\vdash y+z=y+w\land z<w\Ra y+z<y+z$
and $\BA\vdash y+z=y+w\land z>w\Ra y+w<y+w$, we get
$\BA+x<x\Ra\bot\vdash x=y+z\land x=y+w\Ra z=w$.
\end{proof}

\begin{definition}
\label{3d3}
For a quantifier-free formula $ A$, the \emph{geometric equivalent} and the
\emph{geometric negation} of $A$ are respectively denoted by $\gps A$ and
$\gng A$, and are defined recursively as follows:
\begin{itemize}
\item $\gps\top\equiv\top$ and $\gng\top\equiv\bot$;
\item $\gps\bot\equiv\bot$ and $\gng\bot\equiv\top$;
\item $\gps{(s=t)}\equiv s=t$ and $\gng{(s=t)}\equiv s<t \lor t<s$,
if $s$ and $t$ are terms;
\item $\gps{(s<t)}\equiv s<t$ and $\gng{(s<t)}\equiv s=t \lor t<s$,
if $s$ and $t$ are terms;
\item $\gps A\equiv\gps B\land\gps C$ and
$\gng A\equiv\gng B\lor\gng C$, if $A$ is of the form $B\land C$;
\item $\gps A\equiv\gps B\lor\gps C$ and
$\gng A\equiv\gng B\land\gng C$, if $A$ is of the form $B\lor C$;
\item $\gps A\equiv\gng B\lor\gps C$ and
$\gng A\equiv\gps B\land\gng C$, if $A$ is of the form $B\ra C$.
\end{itemize}
\end{definition}
\begin{lemma}
\label{3l4}
For every quantifier free formula $A$, $\gps A$ and $\gng A$
are geometric open formulas such that
$\IEx+\U\vdash A\Lr\gps A$ and $\IEx+\U\vdash\neg A\Lr\gng A$.
\end{lemma}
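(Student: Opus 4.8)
The plan is to argue by induction on the structure of the quantifier-free formula $A$, proving all three assertions simultaneously: that $A^+$ and $A^\neg$ are positive, and the two equivalences $\IEx+\U\vdash A\Lr A^+$ and $\IEx+\U\vdash\neg A\Lr A^\neg$. Positivity is immediate and needs no arithmetic: inspecting Definition \ref{3d3}, both $A^+$ and $A^\neg$ are built solely from the atomic formulas $s=t$ and $s<t$ (together with $\top,\bot$) using $\land$ and $\lor$, so neither $\ra$ nor $\all$ ever occurs.

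Before the induction I would assemble the order-theoretic facts needed in the base and the $\ra$ cases, all provable in $\IEx+\U$. Since addition is commutative and associative and satisfies $0+x=x$ --- each of these being an equation, hence an $\ex_1^+$ formula on which $\ex_1^+$-induction is available, so that $\U$ is not needed here --- one obtains transitivity of $<$ and trichotomy $s<t\lor s=t\lor t<s$ by the usual $\ex_1^+$-inductions (trichotomy, as a disjunction of positive existential formulas, is itself $\ex_1^+$, and one uses $0\le x$ together with the case split $w=0\lor\ex w'(w=Sw')$ from Q7). The one place where $\U$ is indispensable is irreflexivity $\neg(x<x)$: unfolding, $x<x$ gives $x+Sw=x=x+0$ for some $w$, whence commutativity and $\U$ yield $Sw=0$, contradicting Q1. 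From irreflexivity and transitivity I then read off asymmetry ($s<t\ra\neg(t<s)$) and strictness ($s<t\ra s\neq t$ and $s=t\ra\neg(s<t)$). These are exactly the fragment of $\PA^-$ that is relevant here.

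With these in hand the induction is routine, carried out in the classical theory $\IEx+\U$. The base cases for $\top$ and $\bot$ are trivial; for $s=t$ the equivalence $A^+\equiv s=t$ is immediate, while $A^\neg\equiv s<t\lor t<s$ matches $\neg(s=t)$ by trichotomy (forward) and strictness (backward), and the case $s<t$ is symmetric, using trichotomy together with irreflexivity and asymmetry. For the connectives $\land$ and $\lor$ the two equivalences follow from the induction hypotheses by the classical De Morgan laws. For $A\equiv A'\ra A''$ I would use the classical equivalences $A'\ra A''\Lr\neg A'\lor A''$ and $\neg(A'\ra A'')\Lr A'\land\neg A''$: the induction hypotheses for $\neg A'$, $A'$, $A''$ and $\neg A''$ then turn these into $A'^\neg\lor A''^+$ and $A'^+\land A''^\neg$, which are precisely $A^+$ and $A^\neg$.

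The main obstacle is the second paragraph, specifically justifying irreflexivity and trichotomy in $\IEx+\U$ rather than appealing to Lemma \ref{1l8}. One cannot use that lemma, because $\IEx$ is strictly weaker than ${\bf I}_{open}$ --- indeed $\N\models\IEx$ while $\N\not\models{\bf I}_{open}$. Since $\N\not\models\U$ (for instance $0+\8=1+\8$ but $0\neq1$), adjoining $\U$ is exactly what excludes this pathological model and restores irreflexivity; everything else is either $\ex_1^+$-induction already available in $\IEx$ or classical propositional rearrangement.
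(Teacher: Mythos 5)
Your proof is correct and follows essentially the same route as the paper's: induction on the complexity of $A$, with trichotomy $x<y\lor x=y\lor y<x$ (provable in $\IEx$ alone) giving the forward directions of the base cases, irreflexivity $x<x\Ra\bot$ (the one place $\U$ enters) giving the backward directions, and the classical equivalences $A'\ra A''\Lr\neg A'\lor A''$ and $\neg(A'\ra A'')\Lr A'\land\neg A''$ handling the implication case. The only difference is that you spell out the $\ex_1^+$-induction derivations of the order-theoretic facts (and the explicit cancellation argument for irreflexivity) that the paper simply asserts, which is a sound but inessential elaboration.
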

\begin{proof}
The first part of the lemma is obvious. For the second part,
we first note that $\IEx\vdash x<y\lor x=y\lor y<x$. Now we
prove the lemma by induction on complexity of $A$.
\begin{itemize}
\item If $A$ is of the form $s=t$, for some terms $s$ and $t$, then
$\gng A$ is $s<t\lor t<s$.
Hence $\IEx\vdash\neg(s=t)\Ra s<t\lor t<s$. On the other hand,
$\IEx+\U\vdash x<x\Ra\bot$, so $\IEx +\U\vdash s<t\lor t<s\Ra\neg(s=t)$.
\item If $A$ is of the form $s<t$, for some terms $s$ and $t$, then
$\gng A$ is $s=t\lor t<s$.
Hence $\IEx\vdash\neg(s<t)\Ra s=t\lor t<s$. On the other hand, since
$\IEx+\U\vdash x<x\Ra\bot$ and $\IEx+\U\vdash x<y\land y<x\Ra\bot$,
we get $\IEx+\U\vdash s=t\lor t<s\Ra\neg(s<t)$.
\item
If $A$ is of the form $B\ra C$, then $\IEx+\U\vdash B\ra C\Lr\neg B\lor C$
and $\IEx+\U\vdash\neg(B\ra C)\Lr B\land\neg C$. By
the induction hypothesis, $\IEx+\U\vdash B\ra C\Lr\gng B\lor\gps C$
and $\IEx+\U\vdash\neg(B\ra C)\Lr\gps B\land\gng C$.
\end{itemize}
It is routine to check the other cases for $A$.
\end{proof}
\begin{remark}
\label{3r5} \emph{Note that $\IEx\nvdash\neg(x=y)\Lr\gng{(x=y)}$ and
$\IEx\nvdash\neg(x=y)\Lr\gps{(\neg(x=y))}$, for the following reason.
By Lemma \ref{3d17}, $\N$ is a model of $\IEx$ such that $\N\models\8=\8$,
$\N\models\gng{(\8=\8)}$ and $\N\models\gps{(\neg(\8=\8))}$. So the axiom
$\U$ is necessary for the proof of Lemma \ref{3l4}.}
\end{remark}

\begin{lemma}
\label{3l6}
For every $\Eo$ formula $A$, there exists a
$\Ex$ formula $B$ with the same free variables as $A$,
such that $\IEx+\U\vdash A\Lr B$.
\end{lemma}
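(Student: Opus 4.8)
The plan is to reduce the statement to the quantifier-free case already settled by Lemma \ref{3l4}, since the only source of non-positivity in an $\ex_1$ formula is its quantifier-free matrix. First I would write the given $\ex_1$ formula as $A\equiv\ex\x\,C(\x,\y)$, where $C$ is a quantifier-free formula (possibly containing implications). The matrix $C$ is precisely the kind of formula to which Lemma \ref{3l4} applies, so the whole argument will be a matter of invoking that lemma and then pushing the resulting equivalence back through the existential prefix.

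Next I would apply Lemma \ref{3l4} to $C$ to obtain its positive equivalent $C^+$, a positive quantifier-free formula satisfying $\IEx+\U\vdash C\Lr C^+$. Setting $B\equiv\ex\x\,C^+$, the formula $B$ is then $\ex^+_1$ by construction, as its matrix $C^+$ is positive and quantifier-free. It remains to lift the equivalence through the quantifiers: from $\IEx+\U\vdash C\Ra C^+$ and $\IEx+\U\vdash C^+\Ra C$, the monotonicity of the existential quantifier under provable implication, available in the classical base of $\IEx$, gives $\IEx+\U\vdash\ex\x\,C\Lr\ex\x\,C^+$, that is, $\IEx+\U\vdash A\Lr B$.

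There is essentially no hard step here: the entire content is carried by Lemma \ref{3l4}, and in particular the necessity of the axiom $\U$ (see Remark \ref{3r5}) is inherited from that lemma rather than arising anew. The only points that require care are bookkeeping ones. I must check that the passage to the positive equivalent preserves the free variables; this holds because Definition \ref{3d3} rewrites each atomic formula $s=t$ or $s<t$ as a Boolean combination of $s=t$, $s<t$, and $t<s$, none of which introduces or discards free variables. Hence $C^+$ has the same free variables as $C$, and consequently $B\equiv\ex\x\,C^+$ has the same free variables as $A\equiv\ex\x\,C$, as required. I should also note that $C^+$ uses only atoms already occurring in $C$, so the quantified variables $\x$ cannot clash with anything introduced by the rewriting.
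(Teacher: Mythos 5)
Your proof is correct and follows essentially the same route as the paper's: write $A$ as an existentially quantified quantifier-free formula $C$, apply Lemma \ref{3l4} to get $C^+$, and take $B\equiv\ex\x\,C^+$, lifting the equivalence through the existential prefix. The paper's own proof is just a terser version of this, and your added bookkeeping about free variables is harmless but not needed beyond what Definition \ref{3d3} already guarantees.
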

\begin{proof}
Since $A$ is $\Eo$, we have $\IEx+\U\vdash A\Lr\ex\y\,C$ for some
quantifier free formula $C$. If we define $B\equiv\ex\y\,\gps C$,
then by Lemma \ref{3l4}, $\IEx+\U\vdash A\Lr B$.
\end{proof}
We now turn to the well-known MRDP theorem. This theorem is about the
existence of Diophantine equivalents for $\Sig$ formulas in a strong enough
arithmetical theory. We will study our theories of interest in this
relation, and show which are strong enough in this sense and which
are not. Our main results on this matter are postponed until the next
Section, but we need to consider some instances here, for the purpose
of characterizing the provably total recursive functions of our theories.
A well-known classical instance of the MRDP theorem appears in the proof
of the following Theorem, which is an instance of the MRDP theorem itself.
\begin{theorem}
\label{3t7}
For every $\Sig$ formula $ A$,
there exists a $\Ex$ formula $ B$ with the same free
variables as $ A$, such that $\IEx+\U \vdash A \Lr B$.
\end{theorem}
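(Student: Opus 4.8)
The plan is to factor the equivalence through the purely existential class $\ex_1$: first use the classical MRDP theorem to replace a $\Sigma_1$ formula by an equivalent $\ex_1$ formula, and then use Lemma \ref{3l6} to positivize that $\ex_1$ formula into a $\ex_1^+$ one. The whole argument rests on a single external ingredient — the classical MRDP theorem in a weak fragment — together with the positivization machinery already developed over $\IEx + \U$.

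A preliminary observation I would record is that $\IEx + \U$ proves every instance of $\ex_1$-induction, i.e. $\IEx + \U \vdash \IE$. Indeed, given an $\ex_1$ formula, Lemma \ref{3l6} supplies a $\ex_1^+$ formula with the same free variables that is equivalent to it over $\IEx + \U$; substituting the latter for the former turns the corresponding instance of $\ex_1$-induction into an instance of $\ex_1^+$-induction, which is an axiom. Hence every theorem of the classical theory $\IE$ is already a theorem of $\IEx + \U$. Now I would invoke the classical MRDP theorem in the form ``over $\IE$, every $\Sigma_1$ formula is provably equivalent to a $\ex_1$ (indeed Diophantine) formula with the same free variables'' (see \cite{gd:mrdp}). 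By the preceding paragraph this equivalence holds over $\IEx + \U$ as well. Applying Lemma \ref{3l6} to the resulting $\ex_1$ formula yields a $\ex_1^+$ formula $B$ with the same free variables, and chaining the two equivalences gives $\IEx + \U \vdash A \Lr B$, as required.

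The hard part is the classical MRDP input. Passing from $\Sigma_1$ to $\ex_1$ means eliminating the bounded universal quantifiers buried in the $\Delta_0$ matrix — reducing each $\all z<s$ to a single polynomial condition — which is exactly Matiyasevich's theorem (Diophantine definability of exponentiation, sequence coding, and the elimination of bounded universal quantifiers), and making all of this provable in a theory with only existential induction is delicate; I would import it from the cited literature rather than reprove it. By contrast, the role of the axiom $\U$ is confined to the positivization step: as Remark \ref{3r5} shows, over $\IEx$ alone the model $\N$ witnesses that a negated equation need not be equivalent to any positive formula, so cancellation is exactly what makes Lemma \ref{3l4}, and hence Lemma \ref{3l6} — and therefore the final passage to $\ex_1^+$ — go through.
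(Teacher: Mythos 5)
Your proposal follows the paper's own proof essentially step by step: the same decomposition ($\IEx+\U\vdash\IE$ via Lemma~\ref{3l6}, then the classical MRDP input over $\IE$, then Lemma~\ref{3l6} once more to positivize and chain the equivalences), and the same understanding of where the axiom $\U$ does its work. The one point that needs correcting is the attribution of the external input, and it is not merely cosmetic. You cite \cite{gd:mrdp} for ``over $\IE$, every $\Sigma_1$ formula is provably equivalent to an $\ex_1$ formula,'' but Gaifman--Dimitracopoulos prove MRDP over $\ID+\EXP$, not over $\IE$. At this stage of the development neither $\IE$ nor $\IEx+\U$ is known to contain $\ID+\EXP$: that containment is exactly what Corollary~\ref{3c8} ($\IEx+\U\dashv\vdash\ISig$) delivers, and Corollary~\ref{3c8} is itself a consequence of the present theorem, so invoking it here would be circular; nor can one recover $\Delta_0$ induction from $\ex_1$ induction without already having the $\Sigma_1$-to-$\ex_1$ collapse in hand. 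The result you actually need is that this collapse is provable in $\IE$ itself, which is Kaye's Corollary 4.11 in \cite{kn:RKay90} --- precisely the citation the paper uses. With that substitution your argument coincides with the paper's proof; everything else (the derivation of $\IEx+\U\vdash\IE$ by replacing induction formulas with their positive equivalents, and the confinement of $\U$ to the positivization step, as illustrated by Remark~\ref{3r5}) is accurate.
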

\begin{proof}
Given a $\Sig$ formula $A$, by Corollary 4.11 in
\cite{K90}, there exists a $\Eo$ formula $A'$ with the
same free variables as $A$, such that $\IE\vdash A\Lr A'$. By
Lemma \ref{3l6}, for every $\Eo$ formula $C$ there exists
an $\Ex$ formula $C'$ such that $\IEx+\U\vdash C\Lr C'$. Also
$\IEx\vdash\all x\y\,(C'\ra C'[x/Sx])\Ra\all x\y\,(C'[x/0]\ra C')$, so
$\IEx+\U\vdash\all x\y(C\ra C[x/Sx])\Ra\all x\y(C[x/0]\ra C)$. Thus
$\IEx+\U\vdash\IE$, and hence $\IEx+\U\vdash A\Lr A'$. By Lemma \ref{3l6},
there exists an $\Ex$ formula $B$ with the same free variables as $A'$
such that $\IEx+\U\vdash A'\Lr B$. Hence $\IEx+\U\vdash A\Lr B$.
\end{proof}
\begin{corollary}
\label{3c8}
$\IEx+\U\vdv\ISig$
\end{corollary}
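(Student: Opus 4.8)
The statement asserts that $\IEx+\U$ and $\ISig$ are deductively equivalent, so the plan is to prove the two inclusions between their theorems separately, checking in each case that the axioms of one theory are provable in the other. For the inclusion $\ISig\vdash\IEx+\U$, I would first note that $\ISig$ and $\IEx$ share the arithmetical base ${\bf Q}$, and that every $\ex^+_1$ formula is in particular $\Sigma_1$ (its matrix is positive, hence quantifier-free); thus each instance of $\ex^+_1$-induction is already an instance of $\Sigma_1$-induction, and $\ISig\vdash\IEx$ follows at once. It then remains only to derive the \emph{cancellation law} $\U$ (Definition \ref{3d1}) inside $\ISig$, which is the textbook argument: one proves $x+z=y+z\ra x=y$ by induction on $z$, using $x+0=x$, $x+Sz=S(x+z)$, and axiom Q2 for the successor. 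The induction formula here is open, hence $\Sigma_1$, so this use of induction is legitimate in $\ISig$, giving $\ISig\vdash\U$ and completing this direction.

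For the substantive inclusion $\IEx+\U\vdash\ISig$, I would lean on the MRDP theorem (Theorem \ref{3t7}). Since $\IEx$ already contains ${\bf Q}$, it suffices to show that $\IEx+\U$ proves every instance of $\Sigma_1$-induction. Given a $\Sigma_1$ formula $A(x,\y)$, Theorem \ref{3t7} furnishes a $\ex^+_1$ formula $B(x,\y)$ with the same free variables such that $\IEx+\U\vdash A\Lr B$. Now $\IEx$ proves the $\ex^+_1$-induction axiom for $B$, namely $\all x\y\,(B(x,\y)\ra B(Sx,\y))\Ra\all x\y\,(B(0,\y)\ra B(x,\y))$, and I would transfer this along the provable equivalence $A\Lr B$: working in classical logic, the premise $\all x\y\,(A(x,\y)\ra A(Sx,\y))$ and the conclusion $\all x\y\,(A(0,\y)\ra A(x,\y))$ of the $A$-induction axiom are provably equivalent to the corresponding $B$-formulas, so the $A$-instance follows from the $B$-instance. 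Hence $\IEx+\U$ proves $\Sigma_1$-induction, and therefore $\IEx+\U\vdash\ISig$.

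Essentially all the difficulty is already absorbed into Theorem \ref{3t7}, whose proof also records the closely related fact $\IEx+\U\vdash\IE$; consequently I expect this corollary to be short. The only points deserving care are the verification that cancellation is genuinely a $\ISig$-theorem and the transfer of a single induction instance across a provable equivalence of formulas, and neither is a real obstacle, because the theories ${\bf I}\mathcal{C}$ under consideration are classical, so propositional reasoning with the biconditional $A\Lr B$ is unrestricted. Assembling the two directions then yields $\IEx+\U\dashv\vdash\ISig$.
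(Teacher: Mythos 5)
Your proposal is correct and follows essentially the same route as the paper: the paper's proof also obtains $\IEx+\U\vdash\ISig$ from Theorem \ref{3t7} (with the transfer of an induction instance across the provable equivalence $A\Lr B$ being exactly the mechanism already displayed inside the proof of that theorem, where it yields $\IEx+\U\vdash\IE$), and the converse direction from $\ISig\vdash\U$ together with the trivial inclusion $\ex_1^+\subseteq\Sigma_1$. You have merely spelled out details that the paper leaves implicit.
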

\begin{proof}
By Theorem \ref{3t7}, $\IEx+\U\vdash\ISig$. Also $\ISig\vdash\U$ and
$\Ex\subseteq\Sig$, so $\ISig\vdash\IEx+\U$.
\end{proof}
\begin{corollary}
\label{3c9}
The provably total recursive functions of $\IEx+\U$ are exactly the primitive
recursive functions, i.e. $\PTRF(\IEx+\U)=\PR$. Furthermore these
functions are definable in $\IEx+\U$ by $\Ex$ formulas.
\end{corollary}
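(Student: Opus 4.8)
The plan is to deduce both assertions from results proved earlier, the real work being the MRDP theorem of Theorem \ref{3t7}. I would first obtain the equality $\PR(\IEx+\U)=\PRb$ as a formal consequence of Corollary \ref{3c8}. That corollary shows that $\IEx+\U$ and $\ISig$ prove exactly the same sequents; since both are classical theories in the one language $\{0,S,+,\cdot\}$, the notions of $\Sigma_1$ formula and of definability coincide for them, and a $\Sigma_1$ formula $A(\x,y)$ has $\es(A)$ and $\us(A)$ provable in $\IEx+\U$ precisely when they are provable in $\ISig$. Hence $\PR(\IEx+\U)=\PR(\ISig)$, and Theorem \ref{isig} gives $\PR(\ISig)=\PRb$, so that $\PR(\IEx+\U)=\PRb$; note this single equality already yields both inclusions $\PRb\subseteq\PR(\IEx+\U)$ and $\PR(\IEx+\U)\subseteq\PRb$.

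For the ``furthermore'' clause I would improve the defining formula by invoking MRDP. Given $f\in\PR(\IEx+\U)$, fix a $\Sigma_1$ formula $A(\x,y)$ defining $f$ in $\IEx+\U$. By Theorem \ref{3t7} there is a $\ex^+_1$ formula $B(\x,y)$, with the same free variables, such that $\IEx+\U\vdash A\Lr B$; the task is then to check that $B$ still defines $f$ in $\IEx+\U$. Since $\nn\models\IEx+\U$, the equivalence holds in the standard model, so $\nn\models A(\a,b)\lr B(\a,b)$ for all $\a,b$; as $A$ defines $f$ in $\nn$, so does $B$. Reasoning classically, $\es(B)$ follows from $\es(A)$ together with the provable implication $A\ra B$, and $\us(B)$ follows from $\us(A)$ together with $B\ra A$. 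Thus $B$ witnesses that $f$ is $\ex^+_1$-definable in $\IEx+\U$.

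I do not anticipate a serious obstacle, since each half rests on an earlier theorem. The only matters requiring attention are bookkeeping: verifying that $\nn$ is genuinely a model of $\IEx+\U$ (so the provable equivalence transfers to $\nn$ and the property ``defines $f$ in $\nn$'' is preserved), and transporting the existence and uniqueness sequents across $A\Lr B$. The latter is immediate but uses classical logic in an essential way, so it is worth recording explicitly that $\IEx+\U$ is a classical theory. All of the genuine difficulty---the collapse of an arbitrary $\Sigma_1$ predicate to a positive existential one---has already been absorbed into Theorem \ref{3t7}.
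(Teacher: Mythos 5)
Your proof is correct and follows essentially the same route as the paper, whose proof is precisely to combine Corollary \ref{3c8} with Theorem \ref{isig} (giving $\PR(\IEx+\U)=\PR(\ISig)=\PRb$) and Theorem \ref{3t7} (upgrading an arbitrary $\Sigma_1$ defining formula to an $\ex^+_1$ one via the provable equivalence). Your write-up merely makes explicit the bookkeeping—transfer of the existence and uniqueness sequents across the equivalence and the fact that $\nn\models\IEx+\U$—that the paper leaves implicit.
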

\begin{proof}
Combine Corollary \ref{3c8} and Theorems \ref{isig} and \ref{3t7}.
\end{proof}
\begin{theorem}
\label{3t11}
The provably total functions of $\BA+\U$ are exactly the primitive recursive
functions, and they are definable in $\BA+\U$ by $\Ex$ formulas. Consequently,
$\PTRF(\BA+\U)=\PTF(\BA+\U)=\PR$
\end{theorem}
\begin{proof}
If a function $f:\nn^n\to\nn$ is provably total in $\BA +\U$, then by
Theorem \ref{2t9} there exists a $\Ex$ formula $A(\x,y)$ such that:
\begin{itemize}
\item $\BA+\U \vdash \es(A)$,
\item $\BA+\U \vdash \us(A)$,
\item $\nn\models A(\a,f(\a))$, for every $\a\in\nn^n$.
\end{itemize}
Then by Corollary \ref{2c6.1}:
\begin{itemize}
\item $\IEx+\U \vdash \es(A)$,
\item $\IEx+\U \vdash \us(A)$.
\end{itemize}
So $f$ is provably total recursive in $\IEx+\U$ and by Corollary \ref{3c9},
$f$ is primitive recursive. Therefore $\PTF(\BA+\U)\subseteq\PR$.
Now suppose $g:\nn^m\to\nn$ is a primitive recursive function. Then by
Corollary \ref{3c9}, there exists a $\Ex$ formula $B(\x,y)$ such that:
\begin{itemize}
\item $\IEx+\U \vdash \es(B)$,
\item $\IEx+\U \vdash \us(B)$,
\item $\nn\models B(\b,g(\b))$, for every $\b\in\nn^m$.
\end{itemize}
Thus by Corollary \ref{2c6.1}, 
\begin{itemize}
\item $\BA+\U \vdash \es(B)$,
\item $\BA+\U \vdash \us(B)$,
\end{itemize}
and hence $\PR\subseteq\PTRF(\BA+\U)$.
\end{proof}

In the next theorem, we show that the provably total recursive functions
of $\IEx$ are also exactly the primitive recursive functions.
Our method of characterizing the provably total functions of
$\IEx+\U$ did rely on the axiom $\U$, and the result was that
the provably total recursive functions of $\IEx+\U$ are definable
in $\IEx+\U$ by geometric formulas. This fails to hold in $\IEx$,
and the MRDP theorem also fails. To characterize the provably total
recursive functions of $\IEx$, we consider $\Sig$ defining formulas
that are not geometric. The idea of the proof is essentially due to
\cite{Gh}, and goes by the following lines. Given an $\Ex$
formula $\ex\z\,A$ defining a primitive recursive function in $\IEx+\U$,
we first modify $A$ by adding to it bounded versions of its uniqueness sequent
and the axiom $\U$, to get $B$. Then we take the new defining formula
for the function to be $\ex\z\,(B\lor C)$, where $C$ roughly states that
both $\U$ and $B$ are false. While $\IEx$ is not strong enough to refute
$C$, it can prove that $B$ and $C$ cannot happen together, which paves
the way for a proof of the uniqueness sequent. The proof of the existence
sequent relies on derivability of the principle of excluded middle in $\IEx$.
The fact that $\ex\z\,(B\lor C)$ defines the same function as before
comes from $B$ being true and $C$ being false in the standard model $\nn$.

\begin{theorem}
\label{3t000}
The provably total recursive functions of $\IEx$
are exactly the primitive recursive functions,
i.e. $\PTRF(\IEx)=\PR$.
\end{theorem}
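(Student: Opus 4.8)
The plan is to prove the two inclusions of $\PR(\IEx)=\PRb$ separately. The inclusion $\PR(\IEx)\subseteq\PRb$ is immediate: every $\ex_1^+$ formula is $\Sigma_1$, so the $\ex_1^+$-induction rule is a special case of $\Sigma_1$-induction and $\IEx$ is a subtheory of $\ISig$; hence any function whose existence and uniqueness sequents are provable in $\IEx$ has them provable in $\ISig$, giving $\PR(\IEx)\subseteq\PR(\ISig)=\PRb$ by Theorem \ref{isig}. All the difficulty lies in the reverse inclusion $\PRb\subseteq\PR(\IEx)$.

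Fix $f\in\PRb$. By Corollary \ref{3c9} (through the MRDP theorem, Theorem \ref{3t7}) $f$ is defined by some $\ex_1^+$ formula $B(\x,y)$ with $\IEx+\U\vdash\es(B)$ and $\IEx+\U\vdash\us(B)$. I would first secure existence directly in $\IEx$: totality of $f$ is a positive statement, and following the construction of $f$ from the initial functions by composition and primitive recursion, the only induction required is on the $\ex_1^+$ formula $\ex y\,B(n,\x,y)$, provided the computation of $f$ is coded Diophantinely so that the induction hypothesis stays positive. Such induction is licensed in $\IEx$, and the positive reasoning involved is $\U$-free (cf.\ Lemma \ref{2l6.1} with empty $\Gamma$); so the aim is $\IEx\vdash\es(B)$.

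The uniqueness sequent, by contrast, fails in $\IEx$ for the positive $B$. In the model $\N$ of Lemma \ref{3d17} the overspill Lemma \ref{3l100} forces $B(\x,y)$ to hold at the absorbing value $y=\8$ for cofinally many arguments, so $\K\not\Vdash\us(B)$ and hence $\IEx\nvdash\us(B)$. The repair is to replace $B$ by a classically equivalent definition with a $\Delta_0$ matrix, to which Lemma \ref{3l100} does not apply. I would take a least-witness form $\varphi_f(\x,y):=\theta(\x,y)\wedge\all y'<y\,\neg\theta(\x,y')$, where $\theta$ is a bounded $\Delta_0$ checking predicate for $f$: since $\theta$ and $\neg\theta$ are genuinely complementary $\Delta_0$ formulas, the bounded search selects a single least $y$ even at arguments involving $\8$ (where $\8<\8$ holds). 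Uniqueness of $\varphi_f$ is then provable in $\IEx$ from classical logic and the trichotomy $x<y\lor x=y\lor y<x$ that $\IEx$ proves, again with no appeal to $\U$. For bounded functions this is even more direct; for the even-number characteristic function one may simply take $\theta(x):=\ex k\le x\,(k+k=x)$ and argue existence and uniqueness of $(\theta\wedge y=1)\lor(\neg\theta\wedge y=0)$ by excluded middle on $\theta$, with no induction at all.

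The step I expect to be the main obstacle is forcing existence and uniqueness onto one and the same $\Sigma_1$ formula. Existence is most naturally proved for the unbounded positive graph $B$, whose totality is within reach of $\ex_1^+$-induction, whereas uniqueness demands the bounded form $\varphi_f$, whose totality would instead call for $\Delta_0$-induction --- which $\IEx$ lacks (it does not even prove $Sx\neq x$, as witnessed by $\N$). Reconciling the two amounts to presenting the graph of $f$ with all existential witnesses bounded by a term in the arguments and the value, so that $\varphi_f$ stays genuinely $\Sigma_1$ while its totality still descends from the positive induction yielding $\es(B)$. Carrying this out uniformly for every primitive recursive $f$ --- in particular for the fast-growing ones, whose graphs are not outright $\Delta_0$ --- is the delicate core of the proof, and is exactly where the Diophantine presentation of Theorem \ref{3t7} and classical trichotomy have to be played off against each other.
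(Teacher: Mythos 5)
Your proposal correctly isolates the hard direction $\PRb\subseteq\PR(\IEx)$ and correctly explains why the naive routes fail, but it stops exactly where the proof has to begin: by your own admission, the step of forcing existence and uniqueness onto \emph{one and the same} $\Sigma_1$ formula is left unresolved, and the two pillars you lean on are themselves unsupported. The claim that $\IEx\vdash\es(B)$ for a positive Diophantine graph $B$ of an arbitrary primitive recursive $f$ is only sketched (``the induction hypothesis stays positive''); verifying the defining recurrences of a Diophantine coding of computations inside $\IEx$ --- a theory which by Lemma \ref{2l6.1} proves exactly the same positive sequents as $\BA$, hence has no cancellation and no negative information available --- is precisely the kind of fact that is not known to hold, and the paper never needs it. More seriously, your proposed repair provably cannot work as stated: the least-witness form $\varphi_f(\x,y)\equiv\theta(\x,y)\land\all y'(y'<y\ra\neg\theta(\x,y'))$ in general has a \emph{refutable} existence sequent in $\IEx$, because $\N\models\IEx$ (Lemma \ref{3d17}) and in $\N$ the element $\8$ satisfies $\8<\8$. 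For instance, with $\theta(x,y)\equiv x=y$ (the identity function), the only witness at the argument $\8$ is $y=\8$, and the minimality clause fails at $y'=\8$ itself, so $\N\models\neg\ex y\,\varphi(\8,y)$. Your parenthetical assertion that ``the bounded search selects a single least $y$ even at arguments involving $\8$ (where $\8<\8$ holds)'' is exactly backwards: $\8<\8$ is what destroys least witnesses, since $\IEx$ proves no least number principle, not even for open formulas. In effect, your repair trades the failure of uniqueness for a failure of existence.

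The paper resolves the tension by a different device (due to Ghasemloo): it \emph{relativizes the cancellation law} instead of minimizing over witnesses. With $t\equiv y+z_1+\dots+z_m$ and $U(x)\equiv\all uvw(u+v+w\le x\land u+w=v+w\ra u=v)$, one sets $B(\x,y,\z)\equiv U(t)\land A(\x,y,\z)\land\all y'\z'(t'\le t\land A(\x,y',\z')\ra y=y')$, $C(\x,y,\z)\equiv\neg U(t)\land y=0\land\all y'\z'(t'\le t\ra\neg B(\x,y',\z'))$, and $D(\x,y)\equiv\ex\z(B\lor C)$. Uniqueness is built into the matrix by the agreement clause and is proved by comparing totals and using downward closure of $U$; existence is proved by excluded middle on $\ex y\ex\z\,B$: the $\IEx+\U$ proofs of $\es$ and $\us$ of $A$ show, via the deduction theorem, that if no $B$-witness exists then cancellation fails by some bound, which yields a $C$-witness with value $0$. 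Since $\nn\models\U$, the clause $C$ is vacuous in the standard model and $D$ still defines $f$. This needs neither positive-form totality in $\IEx$ nor any least number principle --- the two ingredients your plan could not supply.
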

\begin{proof}
Let $f:\nn^n\to\nn$ be a primitive recursive function. By \ref{3c9},
there is a geometric open formula $A(\x,y,\z)$ such that $f$ is
definable in $\IEx+\U$ by the formula $\ex\z A(\x,y,\z)$,
where $\z=(z_1,\dots,z_m)$. Without loss of generality, we can assume that
$\z$ is nonempty, because we can consider $A(\x,y,\z)\land z=z$ instead of
$A(\x,y,\z)$. Suppose $x$, $y'$, $y''$, $z'_1$, $\dots$, $z'_m$, $z''_1$,
$\dots$, $z''_m$, $u$, $v$ and $w$ are pairwise distinct fresh variables,
$\z'=(z'_1,\dots,z'_m)$, $\z''=(z''_1,\dots,z''_m)$,
$t\equiv y+z_1+\dots+z_m$, $t'\equiv y'+z'_1+\dots+z'_m$
and $t''\equiv y''+z''_1+\dots+z''_m$, and let:
\begin{gather*}
U(x)\equiv\all uvw(u+v+w\le x\land u+w=v+w\ra u=v)\text;\\
B(\x,y,\z)\equiv U(t)\land A(\x,y,\z)\land
\all y'\z'(t'\le t\land A(\x,y',\z')\ra y=y')\text;\\
C(\x,y,\z)\equiv\neg U(t)\land y=0\land
\all y'\z'(t'\le t\ra\neg B(\x,y',\z'))\text;\\
D(\x,y)\equiv\ex\z(B(\x,y,\z)\lor C(\x,y,\z))\text.
\end{gather*}
It's easy to see that $U(x)$, $B(\x,y,\z)$ and $C(\x,y,\z)$ are provably
equivalent in $\IEx$ to $\Dz$ formulas, and thus $D(\x,y)$ is provably
equivalent in $\IEx$ to a $\Sig$ formula.

Since $\IEx+\U\vdash\us(\ex\z A(\x,y,\z))$, we have 
$\IEx+\U\vdash A(\x,y,\z)\Ra\all y'\z'(A(\x,y',\z')\ra y=y')$, and
because $\IEx+\U\vdash\es(\ex\z A(\x,y,\z))$, we get 
$\IEx\vdash\all uvw(u+w=v+w\ra u=v)\Ra\ex y\ex\z B(\x,y,\z)$. Hence
$\IEx\vdash\all y\z\neg B(\x,y,\z)\Ra\ex x\neg U(x)$. Now, we note that
$$\IEx\vdash\neg U(x)\land y=0\land z_1=0\land\dots\land z_{m-1}=0
\land z_m=x\Ra\neg U(t)\land y=0\text,$$
which together with the previous result, yields
$\IEx\vdash\all y\z\neg B(\x,y,\z)\Ra\ex y\ex\z C(\x,y,\z)$. As
$\IEx\vdash\all y\z\neg B(\x,y,\z)\lor\ex y\ex\z B(\x,y,\z)$, we get
$\IEx\vdash\es(D)$.

Now, we show that $\IEx\vdash\us(D)$. For this purpose, we argue as follows.
\begin{enumerate}
\item We first show
$\IEx\vdash B(\x,y,\z)\land C(\x,y'',\z'')\Ra\bot$.
This can be proven using $\IEx\vdash t\le t''\lor t''\le t$,
$\IEx\vdash t\le t''\land C(\x,y'',\z'')\Ra\neg B(\x,y,\z)$
(by the definition of $C(\x,y'',\z'')$) and
$\IEx\vdash t''\le t\land B(\x,y,\z)\Ra U(t'')$
(by definitions of $B(\x,y,\z)$ and $U(t'')$).
\item 
Then, we show that $\IEx\vdash B(\x,y,\z)\land B(\x,y'',\z'')\Ra y=y''$.
This is true because $\IEx\vdash t\le t''\lor t''\le t$,
$\IEx\vdash t\le t''\land B(\x,y'',\z'')\land A(\x,y,\z)\Ra y=y''$
(by definition of $B(\x,y'',\z'')$) and
$\IEx\vdash t''\le t\land B(\x,y,\z)\land A(\x,y'',\z'')\Ra y=y''$
(for a similar reason). 
\item 
At last, it is easy to see that
$\IEx\vdash C(\x,y,\z)\land C(\x,y'',\z'')\Ra y=y''$, since
$\IEx\vdash C(\x,y,\z)\Ra y=0$ and $\IEx\vdash C(\x,y'',\z'')\Ra y''=0$.
\end{enumerate}
Combining these three facts, we get what was claimed.

Finally, as $\IEx+\U\vdash A(\x,y,\z)\Lr B(\x,y,\z)$ and
$\IEx+\U\vdash C(\x,y,\z)\Ra\bot$, we get
$\IEx+\U\vdash\ex\z A(\x,y,\z)\Lr D(\x,y)$. As $\nn\models\IEx+\U$,
we see that $D(\x,y)$ defines $f$. Thus $f$ is definable
in $\IEx$ by a $\Sig$ formula. Consequently $\PR\subseteq\PTRF(\IEx)$.

For the other way around, i.e. $\PTRF(\IEx)\subseteq\PR$, it is enough
to note that $\ISig \vdash\IEx$ and use Theorem \ref{isig}.
\end{proof}
\begin{corollary}
\label{3c000}
The MRDP theorem does not hold in $\IEx$, i.e. there is a $\Sig$ formula
$A$ with no geometric formula $B$ such that $\IEx\vdash A\Lr B$.
\end{corollary}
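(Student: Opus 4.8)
The plan is to produce an explicit $\Sigma_1$ formula whose truth value at the nonstandard point $\8$ of $\N$ cannot be matched by any positive formula, and then to derive a contradiction from the positivity overspill already established in Lemma \ref{3l100}. The guiding observation is that Theorem \ref{3t7} obtained MRDP only after adjoining the cancellation axiom $\U$, whereas $\N$ is precisely a model of $\IEx$ in which cancellation fails; so the counterexample should be read off from the single pathology $\N\models\8<\8$ recorded in Definition \ref{3n17}.

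Concretely, I would take the $\Delta_0$ (hence $\Sigma_1$) formula $A(x)\equiv\neg(x<x)$. In the standard model every natural number satisfies it, so $\nn\models A(n)$ for all $n$, while $\N\models\8<\8$ gives $\N\not\models A(\8)$. Suppose, toward a contradiction, that some positive formula $B(x)$ satisfies $\IEx\vdash A\Lr B$. By Lemma \ref{3d17} the structure $\N$ is a classical model of $\IEx$, so $\N\models A(a)\Lr B(a)$ for every $a\in\N$; in particular the equivalence holds at each natural number. Since $\nn\models A(n)$ for all $n$, it follows that $\N\models B(n)$ for every $n\in\nn$, so the set $\{a\in\nn\mid\N\models B(a)\}$ is infinite.

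At this point I invoke the overspill property for positive formulas, Lemma \ref{3l100}: as $B$ is positive and holds at infinitely many naturals, $\N\models B(\8)$. Feeding this back through the equivalence $\N\models A(\8)\Lr B(\8)$ yields $\N\models A(\8)$, i.e. $\N\models\neg(\8<\8)$, contradicting $\N\models\8<\8$. Hence no positive $B$ can be $\IEx$-equivalent to $A$, which is exactly the failure of MRDP over $\IEx$.

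The only substantive ingredient is Lemma \ref{3l100}; everything else is a direct semantic computation in $\N$. I expect the crux to be conceptual rather than technical, namely the choice of $A$: one wants a formula whose \emph{classical} meaning over $\nn$ coincides with the positive sentence $\top$, yet which $\IEx$ is too weak to prove equivalent to any positive formula, precisely because $\N\models\8<\8$ encodes the absence of $\U$. This is the same obstruction recorded in Remark \ref{3r5}: there $\N\models(\8=\8)^\neg$ (namely $\8<\8\lor\8<\8$) while $\neg(\8=\8)$ fails, so over $\IEx$ a negated equality need not agree with its positive negation. The formula $\neg(x<x)$ simply isolates the underlying fact $\N\models\8<\8$.
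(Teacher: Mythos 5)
Your proof is correct, but it takes a genuinely different route from the paper's. The paper argues through its machinery on provably recursive functions: by Theorem \ref{3t000} the (primitive recursive) cut-off function is provably recursive in $\IEx$ via some $\Sigma_1$ formula $A$; if $A$ had a positive equivalent $B$, then $\IEx$ would prove the (positive) existence and uniqueness sequents of $B$, hence by the cut-elimination-based conservativity result (Lemma \ref{2l6.1}) so would $\BA$, contradicting Corollary \ref{cCutOff}. You instead give a direct model-theoretic argument: $A(x)\equiv\neg(x<x)$ is $\Delta_0$ (hence $\Sigma_1$), true at every standard point of $\N$ but false at $\8$; any $\IEx$-equivalent positive $B$ would hold at all standard points of $\N$ (since $\N\models\IEx$ by Lemma \ref{3d17}), so by overspill (Lemma \ref{3l100}) $\N\models B(\8)$, forcing $\N\models\neg(\8<\8)$, a contradiction. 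Your approach is considerably more elementary and self-contained --- it bypasses Theorem \ref{3t000} (the hardest ingredient in the paper's proof), Lemma \ref{2l6.1}, and the detour through $\BA$ --- and it yields a sharper statement, since your counterexample is quantifier-free rather than merely $\Sigma_1$; the paper's proof, in exchange, exhibits a counterexample of independent interest (the defining formula of the cut-off function), which feeds directly into the discussion in the Final Remarks of why the characterization of $\PR(\BA)$ cannot be reduced to that of $\IEx$. Two small points to tighten: when you pass from $\nn\models A(n)$ to $\N\models B(n)$, you implicitly need $\N\models A(n)$ for standard $n$ (true, since $\N$ restricted to $\nn$ carries the standard order, or alternatively apply Lemma \ref{3l18}(1) to $B$ after noting $\nn\models B(n)$); and Lemma \ref{3l100} is stated for prenex $\ex_1^+$ formulas, so you should note that your positive $B$ is classically equivalent in $\N$ to such a formula, which is exactly the paper's convention of identifying formulas up to provable equivalence.
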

\begin{proof}
The cut-off subtraction is primitive recursive, and thus by Theorem
\ref{3t000}, it is provably total in $\IEx$. Let $A$ be a $\Sig$ formula
that defines the cut-off subtraction in $\IEx$. If there is a geometric
formula $B$ such that $\IEx\vdash A\Lr B$, then $\IEx$ proves existence
and uniqueness sequents of $B$. By Corollary \ref{2c6.1}, $\BA$ proves
these sequents as well. This contradicts Corollary \ref{cCutOff}.
\end{proof}

Now, we consider another extension of $\BA$. In this new extension $\BAc$,
we augment the language $\LL$ with a new symbol ``$\dotminus$" for the the
cut-off subtraction, to get the language $\Lc$, and extend the theory
with the axioms $x\le y\Ra x\dotminus y=0$ and
$y\le x\Ra Sx\dotminus y=S(x\dotminus y)$. It turns out that
this extension of $\BA$ coincides with $\BA+\U$ over $\LL$.

Note that by Corollary \ref{cCutOff}, the cut-off subtraction is not provably
total in $\BA$, hence $\BAc$ might be stronger than $\BA$ (over the language
$\LL$). The next Theorem states that it is indeed the case, by showing that
$\BAc$ and $\BA+\U$ prove the same $\LL$-sequents. We need the following
definitions and lemmas concerning \emph{elimination of the cut-off subtraction
function symbol} to prove the Theorem.

\begin{definition}
\label{def-fnc-elim}
$ $
\begin{itemize}
\item
For an $\Lc$-term $t$ and a variable $x$ not occuring in $t$,
$\trm t x$ is defined recursively as follows:
\begin{itemize}
\item
$\trm tx\equiv t=x$, if $t$ is a variable or $0$;
\item
$\trm tx\equiv\ex y(\trm ry\land Sy=x)$
for a fresh variable $y$, if $t$ is of the form $Sr$;
\item
$\trm tx\equiv\ex yz(\trm ry\land\trm sz\land y+z=x)$
for fresh variables $y$ and $z$, if $t$ is of the form $r+s$;
\item
$\trm tx\equiv\ex yz(\trm ry\land\trm sz\land y\cdot z=x)$
for fresh variables $y$ and $z$, if $t$ is of the form $r\cdot s$;
\item
$\trm tx\equiv\ex yz(\trm ry\land\trm sz\land \co(y,z,x))$
for fresh variables $y$ and $z$, if $t$ is of the form $r\dotminus s$.
\end{itemize}
\item
For an $\Lc$-formula $A$, $\dfe A$ is defined recursively as follows:
\begin{itemize}
\item
$\dfe A\equiv\ex xy(\trm sx\land\trm ty\land x=y)$
for fresh variables $x$ and $y$, if $A$ is of the form $s=t$;
\item
$\dfe A\equiv\ex xy(\trm sx\land\trm ty\land x<y)$
for fresh variables $x$ and $y$, if $A$ is of the form $s<t$;
\item
$\dfe A\equiv A$, if $A$ is $\top$ or $\bot$;
\item
$\dfe A\equiv\dfe B\circ\dfe C$,
if $A$ is of the form $B\circ C$ and $\circ$ is $\lor$ or $\land$;
\item
$\dfe A\equiv\ex u\,\dfe B$, if $A$ is of the form $\ex u\,B$;
\item
$\dfe A\equiv\all\x\,(\dfe B\ra\dfe C)$,
if $A$ is of the form $\all\x\,(B\ra C)$.
\end{itemize}
\end{itemize}
$\dfe{(A\Ra B)}$ is defined as $\dfe A\Ra\dfe B$.
For a set $\Gamma$ of $\Lc$-sequents, $\dfe\Gamma$
is defined as $\{\dfe\alpha|\alpha\in\Gamma\}$.
For a rule $R$, $\dfe R$ is defined as the rule
obtained by replacing any upper or lower sequent
$\alpha$ of $R$ with $\dfe\alpha$.
\end{definition}

\begin{lemma}
\label{dfe-bqc}
$ $
\begin{enumerate}
\item
For every $\Lc$-term $t$ and every variable $x$ not occurring in $t$,
$\trm tx$ is an $\LL$-formula. Moreover, in case $t$ is an $\LL$-term,
$\BQC\vdash\trm tx\Lr t=x$.
\item
For every $\Lc$-formula $A$, $\dfe A$ is an $\LL$-formula. Moreover,
in case $A$ is an $\LL$-formula, $\BQC\vdash\dfe A\Lr A$.
\end{enumerate}
\end{lemma}
\begin{proof}
$ $
\begin{enumerate}
\item
Straightforward by induction on $t$.
\item
Straightforward by induction on $A$.\qedhere
\end{enumerate}
\end{proof}

\begin{lemma}
\label{lBAcU}
$ $
\begin{enumerate}
\item
$\BAc\vdash\U$.
\item
$\BAc\vdash\co(x,y,x\dotminus y)$.
\end{enumerate}
\end{lemma}
\begin{proof}
$ $
\begin{enumerate}
\item
We show that $\BAc\vdash(x+y)\dotminus x=y$ using the rule of induction
on the formula $(x+y)\dotminus x=y$ with $y$ as the eigenvariable.
It is easy to see that $\BAc\vdash(x+0)\dotminus x=0$.
Note that $\BA\vdash x\le x+y$, hence
$\BAc\vdash(S(x+y))\dotminus x=S((x+y)\dotminus y)$,
which implies
$\BAc\vdash(x+y)\dotminus x=y\Ra(x+Sy)\dotminus x=Sy$.
By the rule of induction,
$\BAc\vdash(x+0)\dotminus x=0\Ra(x+y)\dotminus x=y$,
which yields what was claimed. As
$\BAc\vdash x+y=x+z\Ra(x+y)\dotminus x=(x+z)\dotminus x$,
the proof is complete.
\item
Since $\BAc\vdash x<y\Ra x\dotminus y=0$,
we have $\BAc\vdash x<y\Ra\co(x,y,x\dotminus y)$.
As $\BAc\vdash x<y\lor y\le x$, it is now sufficient to prove
$\BAc\vdash y\le x\Ra x=y+(x\dotminus y)$, which implies
$\BAc\vdash x<y\Ra\co(x,y,x\dotminus y)$. To show this,
note that $\BAc\vdash y\le x\Lr\ex z\,(z=y+z)$, and thus proving
$\BAc\vdash x=y+z\Ra z=x\dotminus y$ does the job.
This last statement follows from the proof of the previous item:
$\BAc\vdash(y+z)\dotminus y=z$.\qedhere
\end{enumerate}
\end{proof}

\begin{lemma}
\label{BAc-dfe}
$ $
\begin{enumerate}
\item
For every $\Lc$-term $t$ and any variable $x$ not occuring in it,
$\BAc\vdash\trm tx\Lr t=x$.
\item
For every $\Lc$-formula $A$, $\BAc\vdash\dfe A\Lr A$
\end{enumerate}
\end{lemma}
\begin{proof}
$ $
\begin{enumerate}
\item
The proof is by induction on the term $t$. The only nontrivial case is
when $t$ is of the form $r\dotminus s$. By the induction hypothesis,
we have $\BAc\vdash\trm tx\Lr\ex yz\,(r=y\land s=z\land\co(y,z,x))$.
On the one hand, by item 2 of Lemma \ref{lBAcU},
we have $\BAc\vdash\co(r,s,t)$, and therefore
$\BAc\vdash t=x\Ra\ex yz\,(r=y\land s=z\land\co(y,z,x))$.
On the other hand, by Lemma \ref{U-equiv} and item 1 of Lemma \ref{lBAcU},
$\BAc\vdash\us(\co)$, and therefore $\BAc\vdash\co(r,s,x)\Ra t=x$.
This yields $\BAc\vdash\ex yz\,(r=y\land s=z\land\co(y,z,x))\Ra t=x$,
which together with the previous results completes the proof.
\item
The proof is by induction on the formula $A$. The basis is covered by
the previous item, and the induction steps are straightforward.\qedhere
\end{enumerate}
\end{proof}

\begin{lemma}
\label{dfe-subst}
$ $
\begin{enumerate}
\item
Let $t$ be an $\Lc$-term, and $x$ be a variable not occurring in $t$.
Then $\BA\vdash\ex x(\trm tx)$.
\item
Let $t$ be an $\Lc$-term, and $x$ and $y$ be variables not occurring in $t$.
Then $\BA+\U\vdash\trm tx\land\trm ty\Ra x=y$.
\item
Let $s$ and $t$ be $\Lc$-terms, $z$ be a variable occurring
neither in $s$ nor in $s[y/t]$, and $x$ be a variable not occurring
in $t$ and substitutable for $y$ in $\trm s z$. Then
$\BA+\U\vdash\trm{(s[y/t])}z\Lr\ex x(\trm t x\land(\trm sz)[y/x])$.
\item
Let $A$ be an $\Lc$-formula, $t$ be an $\Lc$-term substitutable for $y$
in $A$, and $x$ be a variable substitutable for $y$ in $A*$. Then
$\BA+\U\vdash\dfe{(A[y/t])}\Lr\ex x(\trm tx\land\dfe A[y/x])$.
\end{enumerate}
\end{lemma}
\begin{proof}
$ $
\begin{enumerate}
\item
Straightforward by induction on $t$, using Proposition \ref{cut-off-ptf}
in the case where $t$ is of the form $r\dotminus s$.
\item
The proof is by induction on $t$. The only nontrivial case is when $t$
is of the form $r\dotminus s$. By induction hypothesis, we have
$$\BA+\U\vdash\trm rz\land\trm sw\land\co(z,w,x)\land
\trm ru\land\trm sv\land\co(u,v,y)\Ra\co(u,v,x)\land\co(u,v,y)\text,$$
for suitable variables $z$, $w$, $u$ and $v$.
Since $\BA+\U\vdash\us(\co)$, we get
$$\BA+\U\vdash\trm rz\land\trm sw\land\co(z,w,x)\land
\trm ru\land\trm sv\land\co(u,v,y)\Ra x=y\text,$$
which in turn proves $\BA+\U\vdash\trm tx\land\trm ty\Ra x=y$.
\item
The proof is by induction on $s$. The only nontrivial case is when $s$
is of the form $s_1\dotminus s_2$. $\trm sz$ is of the form
$\ex u_1u_2(\trm{s_1}{u_1}\land\trm{s_2}{u_2}\land\co(u_1,u_2,z))$ for
some fresh variables $u_1$ and $u_2$, and $\trm{(s[x/t])}z$ is of the form
$\ex v_1v_2(\trm{(s_1[x/t])}{v_1}\land
\trm{(s_2[x/t])}{v_2}\land\co(v_1,v_2,z))$ for
some fresh variables $v_1$ and $v_2$, which respectively can be assumed to be
the same as $u_1$ and $u_2$, without loss of generality. By induction hypothesis, we have $\BA+\U\vdash\trm{(s_i[y/t])}{u_i}\Lr
\ex x(\trm t x\land(\trm{s_i}{u_i})[y/x])$ for $i=1,2$. Therefore
$$\BA+\U\vdash\trm{(s[x/t])}z\Lr\ex u_1u_2
(\ex x(\trm t x\land(\trm{s_1}{u_1})[y/x])\land
\ex x(\trm t x\land(\trm{s_2}{u_2})[y/x])\land\co(u_1,u_2,z))\text,$$
which by the previous item gives
$$\BA+\U\vdash\trm{(s[x/t])}z\Lr\ex u_1u_2
(\ex x(\trm t x\land(\trm{s_1}{u_1})[y/x]\land(\trm{s_2}{u_2})[y/x])
\land\co(u_1,u_2,z))\text.$$
Using logical rules, this statement is equivalent to
$$\BA+\U\vdash\trm{(s[x/t])}z\Lr\ex x(\trm t x\land(\ex u_1u_2
((\trm{s_1}{u_1})[y/x]\land(\trm{s_2}{u_2})[y/x]
\land\co(u_1,u_2,z))))\text,$$
which completes the proof.
\item
The proof is by induction on $A$. The basis is covered by the previous item.
The induction steps are straightforward using item 2. In the case where
$A$ is of the form $\all\z(B\ra C)$, also use item 1.\qedhere
\end{enumerate}
\end{proof}

\begin{theorem}
\label{BAUC}
For every $\Lc$-sequent $\alpha$ and every set $\Gamma$ of $\Lc$-sequents,
$\BAc+\Gamma\vdash\alpha$ iff $\BA+\U+\dfe\Gamma\vdash\dfe\alpha$.
\end{theorem}
\begin{proof}
Since $\BAc$ is an extension of $\BA$ and $\BAc\vdash\U$, If
$\BA+\U+\dfe\Gamma\vdash\dfe\alpha$ then $\BAc+\dfe\Gamma\vdash\dfe\alpha$,
which by item 2 of Lemma \ref{BAc-dfe} implies $\BAc+\Gamma\vdash\alpha$.
For the other direction, note that for any $\beta$ in $\Gamma$, $\dfe\beta$
is in $\dfe\Gamma$. Therefore, if we show $\BA+\U\vdash\dfe\beta$ for any
axiom $\beta$ of $\BAc$ and $\BA+\U\vdash\dfe R$ for any rule $R$ of
$\BAc$, the intended statement follows by induction on the derivations
of $\alpha$ from $\Gamma$ in $\BAc$. Let $\beta$ be one of the
arithmetical axioms 20-25 or the equality axiom 6. Then, by item 2 of
Lemma \ref{dfe-bqc}, we have $\BA+\U\vdash\dfe\beta$. For the equlity
axiom 7, Lemma \ref{dfe-subst} implies
$\BA+\U\vdash\dfe{(A[x/y])}\Lr\ex z(\trm yz\land\dfe A[x/z])$,
and consequently
$\BA+\U\vdash\dfe{(A[x/y])}\Lr\dfe A[x/y]$.
As we also have $\BA+\U\vdash\dfe{(x=y)}\Lr x=y$, we get
$\BA+\U\vdash\dfe{(x=y\land A\Ra A[x/y])}$. For the logical axiom 16,
consider $\x=(x_1,\dots,x_n)$ and $\t=(t_1,\dots,t_n)$. Without loss of
generality, assume that $x_i$ does not occur in $t_i$ for any $i$, and let
$C\equiv\bigwedge_{i=1}^n\trm{t_i}{x_i}$. Note that
$\BA+\U\vdash\forall\x(\dfe A\ra\dfe B)\Ra
\forall\x(C\land\dfe A\ra C\land\dfe B)$,
which by item 3 of Lemma \ref{dfe-subst} shows
$\BA+\U\vdash\dfe{(\all\x(A\ra B)\Ra\all\x(A[\x/\t]\ra B[\x/\t]))}$.
A similar argument proves $\BA+\U\vdash\dfe R$, where $R$ is the logical
rule 11. If $\beta$ is any of the other logical axioms or the induction
axiom, then $\dfe\beta$ is an instance of the same axiom, and if $R$
is any of the other logical rules or the rule of induction, then
$\dfe R$ is an instance of the same rule. Therefore, it only remains to prove
$\BA+\U\vdash\dfe{(x\le y\Ra x\dotminus y=0)}$ and
$\BA+\U\vdash\dfe{(y\le x\Ra Sx\dotminus y=S(x\dotminus y))}$. Note that
$\BQC\vdash\dfe{(x\dotminus y=0)}\Lr x<y\lor x=y+0$ and
$\BQC\vdash\dfe{(Sx\dotminus y=S(x\dotminus y))}
\Lr\ex z(\co(Sx,y,Sz)\land\co(x,y,z))$,
by directly checking Definition \ref{def-fnc-elim}. Thus,
by Lemma \ref{dfe-bqc}, it is sufficient to prove
$\BA+\U\vdash x\le y\Ra x<y\lor x=y+0$ and
$\BA+\U\vdash y\le x\Ra\ex z(\co(Sx,y,Sz)\land\co(x,y,z))$.
The first one is trivial. For the second one, note that
$\BA+\U\vdash y\le x\Lr\ex z(y+z=x)$ and
$\BA+\U\vdash y+z=x\Ra y+Sz=Sx$.
\end{proof}

\begin{corollary}
The provably total functions of $\BAc$ are exactly
the primitive recursive functions, and they are definable
in $\BAc$ by $\Ex$ formulas in the language $\LL$.
Consequently $\PTRF(\BAc)=\PTF(\BAc)=\PR$.
\end{corollary}
\begin{proof}
Assume that a function $f:\nn^n\to\nn$ is definable in $\BAc$ by an
$\Lc$-formula $A$. By item 2 of Lemma \ref{BAc-dfe}, $f$ is also
definable in $\BAc$ by the $\LL$-formula $\dfe A$. By Theorem \ref{BAUC},
$f$ is definable in $\BA+\U$ by the same formula, and by Theorem \ref{2t9},
it is definable in $\BA+\U$ by the geometric formula $\gp{(\dfe A)}$. Since
$\BAc$ is an extension of $\BA+\U$, the result follows by Theorem \ref{3t11}.
\end{proof}

Now we consider a third extension of $\BA$, i.e.
$\EBA=\BA+\top\ra\bot\Ra\bot$, introduced in 
\cite{AH}. To find out the provably total recursive functions of $\EBA$,
we use a result form \cite{S03}, in which the author introduced the
notion of \emph{primitive recursive realizability} for the language of $\BA$,
and showed that its provably total functions are primitive recursive.
The following definition is from \cite{S03}.

\begin{definition}
\label{3d12}
(Primitive Recursive Realizablity) 
Let $\varphi_n$ be the unary partial recursive function with G\"odel code $n$,
$\pi_1$ and $\pi_2$ be the primitive recursive projections of a fixed
primitive recursive pairing function $\langle \cdot , \cdot \rangle$,
and $\PRec(x)$ be the formula expressing that ``in the
program\footnote{The program of a partial recursive function shows how
it is defined in terms of \emph{zero}, \emph{successor} and
\emph{projection} functions by repeatedly applying
\emph{composition}, \emph{primitive recursion} and \emph{minimization}.
Note that by choosing a suitable coding, we can assume that every natural
number is the code of a program.} $x$ there is no use of
minimization"\footnote{It is true that $\nn\models\PRec(n)$ implies
$\varphi_n\in\PR$, but not vice versa. However, for every
primitive recursive function $f$ there is a natural number $n$
such that $\varphi_n=f$ and $\nn\models\PRec(n)$.}.
For a sequence $\x=(x_1,\dots,x_m)$, $\varphi_n(\x)$ is understood as
$\varphi_n(\langle x_1,\langle x_2,\dots,
\langle x_{m-1},x_m\rangle\rangle\rangle)$.
For a formula $A$, $x\:\r A$ is
defined by induction on complexity of $ A$:
\begin{itemize}
\item $x\:\r A\equiv A$, for prime $ A$.
\item $x\:\r (B \land C) \equiv (\pi_1(x) \r B)\land (\pi_2(x) \r C)$.
\item $x\:\r (B \lor C) \equiv (\pi_1(x)=0 \land \pi_2(x)\: \r B)
\lor (\pi_1(x)\ne 0 \land \pi_2(x)\: \r C)$.
\item $x\:\r \:\ex y B(y)\equiv \pi_2(x)\: \r B(\pi_1(x))$.
\item $x\:\r \:\all \z( B(\z)\ra C(\z)) \equiv \PRec(x)\land\all y\z\,
(y\:\r B(\z)\ra\varphi_x(y,\z)\:\r C(\z))\land\all\z(B(\z)\ra C(\z))$.
\end{itemize}
For a sequent $A\Ra B$, $x\:\r( A \Ra B)\equiv \PRec(x) \land
\all y\z\,(y\:\r A\ra \varphi_x(y,\z)\:\r B)\land ( A \ra B)$, where
$\z=(z_1,\dots,z_n)$ is the sequence of all free
variables in $ A \Ra B$ in the appearing order.
\end{definition}

\begin{theorem}
\label{3t14} For all sequents $ A \Ra B$, if $\EBA \vdash
 A \Ra B$, then $\nn\models n\:\r( A \Ra B)$ for
some natural number $n$.
\end{theorem}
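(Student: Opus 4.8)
The plan is to argue by induction on the height of a derivation of $A\Ra B$ in $\EBA$, attaching to each node a natural number realizing the sequent standing there. Since $\EBA=\BA+(\top\ra\bot\Ra\bot)$, every derivation is built from the axioms and rules of $\BQC$ and $\BA$ together with the single extra axiom $\top\ra\bot\Ra\bot$ occurring as an additional leaf. The induction therefore splits into base cases (one per axiom, including the new one) and inductive cases (one per rule).

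For the base cases inherited from $\BA$ --- the logical axioms 1--13 of $\BQC$ and the arithmetical axioms 1--6 together with the induction axiom schema 7 --- one produces for each a primitive recursive index realizing it, as in the primitive recursive realizability of \cite{kn:SS03}. Most are immediate: $A\Ra A$ is realized by an index of the identity; $\bot\Ra A$ holds vacuously since $y\:\r\bot\equiv\bot$; and each arithmetical axiom has an atomic matrix, so it is realized by any index of a total primitive recursive function, its realizability and truth clauses both reducing to classical truth in $\nn$ (with vacuity wherever the antecedent, e.g. $Sx=0$, is false in $\nn$). The one genuinely substantial case is the induction axiom schema 7 (and, in the inductive step, the induction rule 8): from a realizer of $A\ra A[x/Sx]$ one must construct, by iterating $\varphi_x$ along the induction variable, a primitive recursive function carrying a realizer of $A[x/0]$ to a realizer of $A$. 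This iteration is defined by primitive recursion, and the point requiring care is to verify that the resulting index again satisfies ${\bf PR}$, i.e. that the constructed realizer stays within $\PRb$. This is the main obstacle, and is exactly the heart of the soundness argument in \cite{kn:SS03}.

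For the inductive cases (rules 14--19 of $\BQC$ and the induction rule 8), one shows that realizers of the premises yield a realizer of the conclusion, leaning on closure of the primitive recursive functions under composition (rule 14), pairing and case analysis (rules 15 and 16, for $\land$ and $\lor$), substitution (rule 17), and on an effective, ${\bf PR}$-preserving form of the $s$-$m$-$n$ theorem, so that every constructed index again satisfies ${\bf PR}$. The truth conjunct $A\ra B$ appearing in $n\:\r(A\Ra B)$ is dispatched uniformly: $\nn$, read classically, satisfies every axiom of $\EBA$ and each rule preserves classical truth in $\nn$, so the truth component of each conclusion follows from those of its premises.

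It remains to handle the only new leaf, the axiom $\top\ra\bot\Ra\bot$, which is precisely what distinguishes $\EBA$ from $\BA$. Writing $A\equiv\top\ra\bot$ and $B\equiv\bot$, a realizer $n$ must satisfy ${\bf PR}(n)$, the clause $\all y\z\,(y\:\r(\top\ra\bot)\ra\varphi_n(y,\z)\:\r\bot)$, and the truth part $(\top\ra\bot)\ra\bot$. The middle clause holds vacuously: realizing $\top\ra\bot$ would in particular require $\varphi_y(w,\z)\:\r\bot$, that is $\bot$, for a realizer $w$ of $\top$, which is impossible, so nothing realizes $\top\ra\bot$. The truth part holds since $\nn\models\top$ and $\nn\not\models\bot$, whence $\top\ra\bot$ is false in $\nn$. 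Thus any index of a primitive recursive function (say the constant function $0$) realizes this axiom, and the induction is complete.
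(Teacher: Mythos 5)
Your proposal is correct and takes essentially the same approach as the paper: the paper's proof consists of observing that an index $n$ of the zero function satisfies $\nn\models n\:\r(\top\ra\bot\Ra\bot)$ --- precisely your vacuity argument for the new axiom --- and then deferring all the axioms and rules inherited from $\BA$ to the soundness induction of Theorem 4.4 in \cite{kn:SS03}, which is exactly the induction on derivations you sketch. The only difference is presentational: you outline that induction (base cases, closure of $\PRb$ under the rules, the iteration for induction) where the paper cites it.
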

\begin{proof}
Suppose $\varphi_n$ is the zero function, then $\nn\models
n\:\r(\top \ra \bot \Ra \bot)$. The rest of the proof is similar
to the proof of Theorem 4.4 in \cite{S03}, which shows the soundness
of $\BA$ with respect to primitive recursive realizability.
\end{proof}
\begin{corollary}
\label{3c15} For every formula $ A(\x,y)$ with the presented
free variables, if $\EBA \vdash \ex y\: A(\x,y)$, then there is
a (unary) primitive recursive function $f$ such that $\nn
\models A(\a,f(\a))$ for all $\a\in\nn^n$,
where $f(\a)$ for a sequence $\a=(a_1,\dots,a_n)$ means
$f(\langle a_1,\langle a_2,\dots,
\langle a_{n-1},a_n\rangle\rangle\dots\rangle)$.
\end{corollary}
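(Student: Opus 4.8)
The plan is to read off the witnessing function directly from a realizer supplied by Theorem \ref{3t14}. First I would note that, by the convention that $C$ abbreviates $\top\Ra C$, the hypothesis $\EBA\vdash\ex y\,A(\x,y)$ is the statement $\EBA\vdash\top\Ra\ex y\,A(\x,y)$. Applying Theorem \ref{3t14} to this sequent yields a natural number $n$ with $\nn\models n\:\r(\top\Ra\ex y\,A(\x,y))$. Since the only free variables of the sequent are $\x=(x_1,\dots,x_n)$, unfolding the definition of $\r$ on a sequent (Definition \ref{3d12}) produces three conjuncts; the ones I need are $\mathbf{PR}(n)$, which by the defining property of $\mathbf{PR}$ guarantees $\varphi_n\in\PRb$, and $\all w\x\,(w\:\r\top\ra\varphi_n(w,\x)\:\r\ex y\,A(\x,y))$.

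Next I would extract the witness. Because $\top$ is atomic, $w\:\r\top$ reduces to $\top$, so I may instantiate $w:=0$; then for every $\a\in\nn^n$ we have $\nn\models\varphi_n(0,\a)\:\r\ex y\,A(\a,y)$. Unfolding the existential clause of the realizability definition, this reads $\pi_2(\varphi_n(0,\a))\:\r A(\a,\pi_1(\varphi_n(0,\a)))$. I would then set $f$ to be the unary function $f(e):=\pi_1(\varphi_n(\langle 0,e\rangle))$, which is primitive recursive since $\varphi_n$, the pairing function and $\pi_1$ all are; under the coding convention of the statement one has $f(\a)=\pi_1(\varphi_n(0,\a))$.

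It remains to pass from realizability to plain truth, i.e.\ to deduce $\nn\models A(\a,f(\a))$ from $\pi_2(\varphi_n(0,\a))\:\r A(\a,f(\a))$. For this I would record the auxiliary fact that for every formula $C$ and every $u$, $\nn\models u\:\r C$ implies $\nn\models C$, proved by a routine induction on the complexity of $C$: for atomic $C$ the two sides coincide by definition; the $\land$, $\lor$ and $\ex$ clauses reduce at once to the induction hypothesis applied to subformulas; and the $\all(\,\cdot\ra\cdot)$ clause (which also subsumes bare implications, where the quantifier string is empty) carries the truth condition $\all\z\,(C'\ra C'')$ as an explicit conjunct. Applying this fact to $C\equiv A(\a,f(\a))$ then gives $\nn\models A(\a,f(\a))$ for all $\a$, as required.

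The argument presents no real obstacle: each step is a matter of correctly unfolding Definition \ref{3d12} and keeping the tuple-coding bookkeeping straight between $\varphi_n(0,\a)$ and $\varphi_n(\langle 0,e\rangle)$. The only point needing a separate (and trivial) argument is the realizability-implies-truth lemma of the last paragraph, and even that is essentially forced by the design of $\r$, whose implication and universal clauses already carry the ordinary truth of the formula as a conjunct.
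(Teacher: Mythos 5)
Your proof is correct and follows essentially the same route as the paper, which simply defers to Salehi's Corollary 4.5: both arguments apply the realizability soundness theorem (Theorem \ref{3t14}) to the sequent $\top\Ra\ex y\,A(\x,y)$, use $\mathbf{PR}(n)$ to get a primitive recursive $\varphi_n$, read the witness off the existential clause of Definition \ref{3d12}, and finish with the routine realizability-implies-truth induction. Your write-up just makes explicit the unfolding and bookkeeping that the paper leaves to the cited reference.
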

\begin{proof}
Similar to the proof of Corollary 4.5 in \cite{S03},
which derives the same conclusion for $\BA$ from its
soundness with respect to primitive recursive realizability.
\end{proof}

\begin{definition}
\label{godel-nt}
For a formula $A$, the \emph{G\"odel negative translation}\footnote
{Note the slight difference between the current definition and the usual
negative translation, in which $\gt A=\neg\neg A$ for atomic $A$.} of $A$
is denoted by $\gt A$ and defined recursively as follows:
\begin{itemize}
\item
$\gt A=A$, if $A$ is prime;
\item
$\gt A\equiv\gt B\land\gt C$, if $A$ is of the form $B\land C$;
\item
$\gt A\equiv\neg(\neg\gt B\land\neg\gt C)$,
if $A$ is of the form $B\lor C$;
\item
$\gt A\equiv\neg\all u\,\neg\gt B$, if $A$ is of the form $\ex u\,B$;
\item
$\gt A\equiv\all\x\,(\gt B\ra\gt C)$,
if $A$ is of the form $\all\x\,(B\ra C)$.
\end{itemize}
$\gt{(A\Ra B)}$ is defined as $\gt A\Ra\gt B$.
\end{definition}

\begin{proposition}
\label{EBA-gt}
If $\PA\vdash\alpha$ then $\EBA\vdash\gt\alpha$.
\end{proposition}
\begin{proof}
Theorem 3.14 of \cite{AH}.
\end{proof}

\begin{theorem}
\label{3t16}
The provably total recursive functions of $\EBA$ are
exactly the provably total functions of $\EBA$, and
coincide with the primitive recursive functions,
i.e. $\PTRF(\EBA)=\PTF(\EBA)=\PR$. Furthermore these functions are
definable in $\EBA$ by $\Ex$ formulas.
\end{theorem}
\begin{proof}
By Corollary \ref{3c15}, the provably total functions of $\EBA$ are
primitive recursive, i.e. $\PTF(\EBA)\subseteq\PR$. On the other hand,
Since $\PA\vdash\U$, we have $\EBA\vdash \U$ by Proposition \ref{EBA-gt}.
So by Theorem \ref{3t11}, all primitive recursive functions are defined
in $\EBA$ by $\Ex$ formulas, and thus $\PR\subseteq\PTRF(\EBA)$.
\end{proof}

\section{The MRDP theorem in $\BA$ and some of its extensions}
In this section, we consider the well-known MRDP theorem in $\BA$ and its
extensions defined in the last section. We show that in $\BA$ and in two of
its extensions, i.e., $\BA$ augmented by the cancellation law and also $\BA$
augmented by the cut-off subtraction, the MRDP theorem does \emph{not} hold.
However, $\EBA$ is strong enough to have the MRDP theorem.

In the last part of this section, we will have a closer look at $\EBA$, and
obtain some of its nice properties with relation to some classical fragments
of $\PA$.

\begin{notation}
\label{3n26} Let $\T$ be a theory. Then:
\begin{itemize}
\item $\T\vdash \MRDP$ means that for every $\Sig$ formula $ A$
there exists a $\Ex$ formula $ B$ with the same free vraiables as $ A$
such that $\T\vdash A\Lr B$;
\item $\T\vdash\MRDPw$ means that for every $\Sig$ formula $ A$
there exists a $\Ex$ formula $ B$ with the same free vraiables as $ A$
such that $\T\vdash A\lr B$.
\end{itemize}
\end{notation}
\begin{theorem}
\label{3t27}
Let $\Gamma$ be a set of geometric sequents.
If $\HA+\Gamma$ is consistent, then $\BA+\Gamma\nvdash\MRDPw$.
\end{theorem}
\begin{proof}
Let $ A$ be the $\Sig$ formula defined as
$A\equiv\top\ra\bot$. Suppose that $\BA+\Gamma\vdash\MRDPw$.
Then there exists a $\Ex$ formula $ B$ such that
$\BA+\Gamma\vdash A\lr B$. In particular,
$\BA+\Gamma\vdash(\top\ra\bot)\ra B$. Hence by Proposition \ref{2c14},
$\BA+\Gamma\vdash\top\ra B$. So $\HA+\Gamma\vdash A$. Then 
$\HA+\Gamma\vdash\bot$, which leads to a contradiction.
\end{proof}

\begin{corollary}
\label{3c28} Neither $\MRDPw$ nor $\MRDP$ can be proved in
$\BA$, $\BA+\U$ or $\BAc$.
\end{corollary}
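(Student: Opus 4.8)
The plan is to reduce the whole statement to Theorem~\ref{3t27}, which already gives $\BA+\Gamma^\ex\nvdash\MRDP^w$ whenever $\HA+\Gamma^\ex$ is consistent. First I would record a purely logical fact valid in $\BQC$, hence in all three theories: if $\vdash A\Lr B$ then $\vdash A\lr B$. Indeed, from the sequents $A\Ra B$ and $B\Ra A$ one gets the projection $\top\land A\Ra A$ (by the reversibility of rule~15), then $\top\land A\Ra B$ by transitivity, so rule~19 with empty prefix yields $\top\Ra(A\ra B)$; symmetrically $\top\Ra(B\ra A)$, and rule~15 gives $\top\Ra(A\lr B)$, i.e.\ $\vdash A\lr B$. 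Consequently ${\bf T}\vdash\MRDP$ implies ${\bf T}\vdash\MRDP^w$ for each theory, so it suffices to refute $\MRDP^w$; this refutes $\MRDP$ a fortiori.

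For $\BA$ and $\BA+\U$ the conclusion is then immediate. Taking $\Gamma^\ex=\varnothing$ and using the consistency of $\HA$, Theorem~\ref{3t27} gives $\BA\nvdash\MRDP^w$. For $\BA+\U$, I would note that $\U$ is a sequent between atomic (hence positive) formulas, so $\U=\U^\ex$ and $\BA+\U=\BA+\{\U\}^\ex$; since $\HA\vdash\U$ we have $\HA+\U=\HA\nvdash\bot$, and Theorem~\ref{3t27} applies again.

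The only genuinely new case is $\BA_c$, since it lives in the extended language ${\cal L}_c$ while Theorem~\ref{3t27} is stated over ${\cal L}$; here I would transport the counterexample through Theorem~\ref{BAUC}. Assume $\BA_c\vdash\MRDP^w$ and apply it to the $\Sigma_1$ formula $A\equiv\top\ra\bot$, obtaining a $\ex^+_1$ formula $B$ in ${\cal L}_c$ with $\BA_c\vdash A\lr B$. Since $A$ is $\dotminus$-free, $A^*=A$, and $B^*$ is $\ex^+_1$ in ${\cal L}$: the $*$-elimination replaces each cut-off term by an existentially quantified variable constrained by the positive formula $(t<s\land z=0)\lor t=s+z$, so it preserves positivity and, after pulling existentials outward, existentiality. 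Hence $(A\lr B)^*=A\lr B^*$, and by Theorem~\ref{BAUC} we get $\BA+\U\vdash A\lr B^*$. Now the argument inside the proof of Theorem~\ref{3t27} with $\Gamma^\ex=\{\U\}$ (extract $(\top\ra\bot)\ra B^*$, apply Proposition~\ref{2c14} to obtain $\top\ra B^*$, then pass to $\HA$) yields $\HA\vdash\bot$, a contradiction; thus $\BA_c\nvdash\MRDP^w$.

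The main obstacle is precisely this last case: one must verify carefully that $*$ maps $\ex^+_1$ formulas to $\ex^+_1$ formulas and that $(A\lr B)^*=A\lr B^*$ for $\dotminus$-free $A$, so that Theorem~\ref{BAUC} genuinely transfers the MRDP witness from $\BA_c$ back to $\BA+\U$. Everything else is a direct invocation of Theorem~\ref{3t27} together with the consistency of $\HA$.
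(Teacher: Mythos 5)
Your proposal is correct and takes essentially the same route as the paper: $\BA$ and $\BA+\U$ are handled by Theorem~\ref{3t27} (with $\Gamma^\ex=\varnothing$ and $\Gamma^\ex=\{\U\}$, noting $\U$ is a sequent of atomic, hence positive, formulas and $\HA\vdash\U$), while $\BA_c$ is reduced to $\BA+\U$ via Theorem~\ref{BAUC} together with the observation that $(\cdot)^*$ sends positive $\mathcal{L}_c$-formulas to positive $\mathcal{L}$-formulas. You merely make explicit some details the paper leaves implicit, such as the $\BQC$-derivation showing that $\vdash A\Lr B$ implies $\vdash A\lr B$ (so refuting $\MRDP^w$ also refutes $\MRDP$) and the step-by-step transfer of the witness $B$ through the $*$-translation.
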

\begin{proof}
For $\BA$ and $\BA+\U$, the proof is straightforward by Theorem \ref{3t27}.
For $\BAc$, we note that for every geometric formula $A$ in the language
$\Lc$, $\dfe A$ is a geometric formula in the language $\LL$,
and thus by Theorem \ref{BAUC}, the claim is reduced to that of $\BA+\U$.
\end{proof}
For analyzing the MRDP theorem and other properties for $\EBA$,
we first consider the theory $\EBD$, which is formalized by the axioms and
rules of $\EBA$, except that axiom and rule of induction are restricted
to $\Dz$ formulas.
\begin{definition}
\label{3d31} For a $\Dz$ formula $ A$, the
\emph{bounded negation} of $ A$ is denoted by
$\dzn A$ and is defined recursively as follows:
\begin{itemize}
\item $\dzn\top\equiv\bot$,
\item $\dzn\bot\equiv\top$,
\item $\dzn{(s=t)}\equiv t<s\lor s<t$, if $s$ and $t$ are terms,
\item $\dzn{(s<t)}\equiv t<s\lor s=t$, if $s$ and $t$ are terms,
\item $\dzn A\equiv\dzn B\lor\dzn C$ if $A$ is of the form $B\land C$,
\item $\dzn A\equiv\dzn B\land\dzn C$ if $A$ is of the form $B\lor C$,
\item $\dzn A\equiv B\land\dzn C$ if $A$ is of the form $B\ra C$,
\item $\dzn A\equiv\all x(x<t\ra\dzn{B(x)})$ if $A$ is of the form
$\ex x(x<t\land B(x))$,
\item $\dzn A\equiv\ex x(x<t\land\dzn{B(x)})$ if $A$ is of the form
$\all x(x<t\ra B(x))$.
\end{itemize}
\end{definition}
\begin{lemma}
\label{3l32}
For every $\Dz$ formula $A$, $\EBD\vdash A\lor\dzn A$
and $\EBD\vdash A\land\dzn A\Ra\bot$. Consequently
$\EBD\vdash\dzn A\Lr\neg A$.
\end{lemma}
\begin{proof}
We prove $\EBD\vdash A\lor\dzn A$ and $\EBD\vdash A\land\dzn A\Ra\bot$
simultaneously by induction on the complexity of $A$. But first, note that $\EBD\vdash\PAm$,
i.e. $\EBD$ proves all the axioms 28-44. For the those other than axiom 37,
the argument is similar to those in Lemmas 2.5 and 2.8 and Proposition 2.6
in \cite{AH}, which prove the same axioms for $\BA$, noting that only
$\Dz$-induction formulas are used. Derivability of axiom 37 follows from
applying the $\Dz$-induction rule to the formula $\neg x<x$ and
the fact that $\EBD\vdash\top\ra\bot\Ra\bot$.

If $A$ is of the form $s=t$ or $s<t$, then $A\lor\dzn A$ is
provably equivalent to $s<t\lor s=t\lor t<s$ in $\BQC$, and thus
derivable in $\EBD$. This also shows $\EBD\vdash\neg A\Ra\dzn A$, since
$\EBD\vdash\top\ra\bot\Ra\bot$. For the remaining part of
the base case, it is sufficient to prove $\EBD\vdash s=t\land s<t\Ra\bot$,
which is a consequence of $\EBD\vdash x<x\Ra\bot$.

In case $A$ is of the form $B\ra C$, we have the induction hypothesis
$\EBD\vdash B\lor\dzn B$ and $\EBD\vdash C\lor\dzn C$, together with
$\EBD\vdash B\land\dzn B\Ra\bot$ and $\EBD\vdash C\land\dzn C\Ra\bot$.
Note that we already have the derivability of $\neg B\lor C\Ra(B\ra C)$
and $(B\ra C)\land(B\land\neg C)\Ra\top\ra\bot$ in $\BQC$, which
combining with the induction hypothesis, completes this induction step.

Suppose that $A$ is of the form $\ex x(x<s\land B(x))$. Define
$C(y)\equiv\ex x(x<y\land B(x))\lor\all x(x<y\ra\dzn{B(x)})$,
where $y$ does not occur in $A$. From the induction hypothesis
$\EBD\vdash B(y)\lor\dzn{B(y)}$, it can easily be
derived that $\EBD\vdash C(y)\Ra C(Sy)$. So by the
induction rule, $\EBD\vdash C(0)\Ra C(y)$. Also from
$\EBD\vdash x<0\Ra\bot$, we have $\EBD\vdash x<0\Ra\dzn{B(x)}$,
and thus $\EBD\vdash C(0)$. Hence $\EBD\vdash C(y)$, and
$\EBD\vdash C(s)$, which means $\EBD\vdash A\lor\dzn A$.
To prove $\EBD\vdash A\land\dzn A\Ra\bot$, note that we have
$\BQC\vdash\ex x(x<s\land B(x))\land\all x(x<s\ra\neg B(x))
\Ra\top\ra(\top\ra\bot)$.

An argument similar to the one for the previous case works for
the case where $A$ is of the form $\all x(x<t\ra B(x))$.
The other cases are easy to verify.
\end{proof}
\begin{lemma}
\label{3l33}
For every $\Dz$ formulas $A$ and $B$, the following hold:
\begin{enumerate}
\item $\EBD\vdash\top\ra A\Lr A$;
\item $\EBD\vdash A\Lr\neg\neg A$;
\item $\EBD\vdash A\ra B\Lr\neg A\lor B$.
\end{enumerate}
\end{lemma}
\begin{proof}
$ $
\begin{enumerate}
\item Use Lemma \ref{3l32} and the fact that
$\BQC\vdash(\top\ra A)\land\neg A\Ra\top\ra\bot$.
\item Use Lemma \ref{3l32} and the fact that
$\BQC\vdash A\land\neg A\Ra\top\ra\bot$ and
$\BQC\vdash\neg A\land\neg\neg A\Ra\top\ra\bot$.
\item Use Lemma \ref{3l32} for $A$ and item 1 for $B$.\qedhere
\end{enumerate}
\end{proof}

The following Lemma shows that $\EBD$
(and consequently any of its extensions, for instance $\EBA$)
proves \emph{the least number principle} for $\Dz$ formulas.

\begin{lemma}
\label{3l34}
For every $\Dz$ formula $ A(x)$, $\EBD \vdash
\ex x A(x)\Ra\ex x( A(x) \land \all y(y<x \ra \neg A(x)))$.
\end{lemma}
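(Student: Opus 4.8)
The plan is to derive the least number principle by ordinary $\Delta_0$-induction, folding the entire inductive content into a single $\Delta_0$ formula so that no deduction theorem for $\EBD$ is needed. Writing $B(z)\equiv\all y(y<z\ra\neg A(y))$ for ``nothing below $z$ satisfies $A$'', I set $P(x)\equiv\ex z(z<Sx\land A(z))$ and $Q(x)\equiv\ex z(z<Sx\land A(z)\land B(z))$, and take as the induction formula $D(x)\equiv P(x)\ra Q(x)$, which reads ``if some $z\le x$ satisfies $A$, then there is a least such witness $\le x$''. By Lemma \ref{3l33}(1) the formula $\neg A(y)$ is of class $\Delta_0$, so $B(z)$, $P(x)$, $Q(x)$, and hence $D(x)$, are all $\Delta_0$, and the $\Delta_0$-induction of $\EBD$ applies to $D$.

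First I would establish $\EBD\vdash D(0)$ and $\EBD\vdash D(x)\Ra D(Sx)$. For the base case, assuming $P(0)$ and using $\EBD\vdash z<S0\Lr z=0$ gives $A(0)$, while $B(0)$ holds vacuously since $\EBD\vdash z<0\ra\bot$; thus $z=0$ witnesses $Q(0)$. For the step, assume $D(x)$ and $P(Sx)$ and split on $P(x)$ via $\Delta_0$-excluded middle (Lemma \ref{3l33}(2)). If $P(x)$ holds, then $D(x)$ yields $Q(x)$, and since $z<Sx\ra z<SSx$ this already gives $Q(Sx)$. If $P(x)$ fails, then $\all z(z<Sx\ra\neg A(z))$, which is exactly $B(Sx)$; choosing a witness $z<SSx$ with $A(z)$ from $P(Sx)$ and using $\EBD\vdash z<SSx\Lr(z<Sx\lor z=Sx)$, the alternative $z<Sx$ is excluded by $\neg A(z)$, so $z=Sx$ and $A(Sx)$ holds, whence $Sx$ itself witnesses $Q(Sx)$. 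Here the case analysis and the passage from $D(x)=P(x)\ra Q(x)$ to $Q(x)$ are legitimate because Lemma \ref{3l33} makes $\Delta_0$ formulas behave classically in $\EBD$, in particular $\top\ra C\Lr C$ and $C\ra E\Lr\neg C\lor E$.

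Applying the induction rule of $\EBD$ to $D(x)\Ra D(Sx)$ then yields $D(0)\Ra D(x)$, and composing with $\top\Ra D(0)$ gives $\EBD\vdash D(x)$ with $x$ free. To conclude, from $A(x)$ one derives $x<Sx\land A(x)$, hence $P(x)$, and then $D(x)$ delivers $Q(x)$, i.e.\ $\ex z(z<Sx\land A(z)\land B(z))$, from which $\ex z(A(z)\land B(z))$ follows. A single application of the existential rule (rule 18) produces $\ex x A(x)\Ra\ex x(A(x)\land B(x))$, which is the asserted least number principle.

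The main obstacle is logical rather than arithmetical: the minimal-witness argument is inherently classical, whereas $\EBD$ is built on basic logic, where modus ponens and case distinctions are not freely available and where I do not wish to assume that $\EBD$ is faithful. Both difficulties are circumvented by (i) invoking Lemma \ref{3l33} together with Lemma \ref{3l32} to recover full classical reasoning on the $\Delta_0$ fragment, and (ii) packaging the induction as the single $\Delta_0$ implication $D(x)$, so that only the plain $\Delta_0$-induction rule of $\EBD$ is used and no hypothesis ever needs to be discharged.
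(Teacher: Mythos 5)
Your proof is correct and follows essentially the same route as the paper: both package the bounded least number principle into a single $\Delta_0$ induction formula, use Lemmas \ref{3l32} and \ref{3l33} to do the classical case analysis and modus-ponens-style steps inside $\EBD$, apply the $\Delta_0$-induction rule, and then extract the unbounded statement. The only difference is cosmetic --- the paper's induction formula $\all y(y<x\ra\neg A(y))\lor\ex y(y<x\land A(y)\land\all z(z<y\ra\neg A(z)))$ is the disjunctive form of your implication $D(x)$, and by Lemma \ref{3l33}(5) the two are provably interchangeable in $\EBD$.
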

\begin{proof}
Define $ B(x)\equiv\all y(y<x \ra \neg A(y))\lor \ex y(y<x\land
A(y) \land \all z(z<y \ra \neg A(z)))$, where $x$ does not occur
in $A(y)$. we have:
\begin{itemize}
\item $\EBD \vdash \all y(y<x \ra \neg A(y)) \land \neg A(x)\Ra
\all y(y<Sx \ra \neg A(y))$,
\item $\EBD \vdash \all y(y<x \ra \neg A(y)) \land A(x)\Ra \ex
y(y<Sx\land A(y) \land \all z(z<y \ra \neg A(z)))$,
\item $\EBD \vdash \ex y(y<x\land A(y) \land \all z(z<y \ra
\neg A(z)))\Ra \ex y(y<Sx\land A(y) \land \all z(z<y \ra
\neg A(z)))$.
\end{itemize}
By Lemma \ref{3l32}, $\EBD \vdash A(x) \lor \neg A(x)$. By the
above items we have $\EBD \vdash B(x)\Ra B(Sx)$, and by the
induction rule, $\EBD \vdash B(0)\Ra B(x)$. We also have $\EBD
\vdash B(0)$, and thus $\EBD \vdash B(x)$. Lemma \ref{3l32} implies
that $\EBD \vdash \all y(y<x\ra \neg A(y))\land \ex y(y<x \land
A(y))\Ra \bot$. Therefore $\EBD \vdash \ex y(y<x \land A(y))\Ra
\ex y(y<x\land A(y) \land \all z(z<y \ra \neg A(z)))$, and hence
$\EBD \vdash \ex y A(y)\Ra \ex y( A(y) \land \all z(z<y \ra \neg
A(z)))$.
\end{proof}

The next Lemma shows that $\EBD$ and its sequent extensions prove instances
of induction (both in the axiom form and the rule form) that are more
general than just $\Dz$-induction. This is needed to show that these
theories are functional and well-formed, and therefore satisfy the
soundness and completeness with respect to their corresponding class
of Kripke models; see subsection \ref{semantics}. Note that the proof of
the Lemma and the statements it relies on are all syntactic in nature.
In particular, completeness itself hasn't been used and no cyclic reasoning
is happening here.

\begin{lemma}
\label{EBD-func-wll}
Let $A$ be a formula, $B(x)$ be a $\Dz$ formula such that $x$ is
not free in $A$, and $\Gamma$ be a set of sequents in $\LL$. Then
\begin{enumerate}
\item
$\EBD+\Gamma+A\land B(x)\Ra B(Sx)\vdash A\land B(0)\Ra B(x)$;
\item
$\EBD+\Gamma\vdash\all\y(A\land B(x)\ra B(Sx))\Ra\all\y(A\land B(0)\ra B(x))$,
for any sequence $\y$ of variables.
\end{enumerate}
\end{lemma}
\begin{proof}
$ $
\begin{enumerate}
\item
Since $\EBD\vdash x=0\lor\ex y\,x=Sy$, we have
$\EBD\vdash B(0)\land\neg B(x)\Ra\ex y\,x=Sy$. By Lemma \ref{3l34} and
item 2 of Lemma \ref{3l33}, this implies
$\EBD\vdash B(0)\land\neg B(x)\Ra\ex y(\neg B(Sy)\land\all z(z<Sy\ra B(z)))$,
thus $\EBD\vdash B(0)\land\neg B(x)\Ra\ex y(\neg B(Sy)\land B(y))$, and hence
$\EBD\vdash A\land B(0)\land\neg B(x)\Ra\ex y(A\land B(y)\land\neg B(Sy))$.
Therefore
$\EBD+\Gamma+A\land B(x)\Ra B(Sx)\vdash A\land B(0)\land\neg B(x)\Ra\bot$,
implying
$\EBD+\Gamma+A\land B(x)\Ra B(Sx)\vdash A\land B(0)\Ra\neg\neg B(x)$,
which by item 2 of Lemma \ref{3l33} yields
$\EBD+\Gamma+A\land B(x)\Ra B(Sx)\vdash A\land B(0)\Ra B(x)$.
\item
By an argument similar to the beginning parts of what we did for
the previous item, we get $\EBD\vdash\all\y(A\land B(0)\land\neg B(x)
\ra\ex z(A\land B(z)\land\neg B(Sz)))$. As $\EBD\vdash
\all\y(A\land B(x)\ra B(Sx))\Ra\all\y\neg\ex z(A\land B(z)\land\neg B(Sz))$,
we have $\EBD+\Gamma\vdash
\all\y(A\land B(x)\ra B(Sx))\Ra\all\y\neg(A\land B(0)\land\neg B(x))$.
Consequently $\EBD+\Gamma\vdash
\all\y(A\land B(x)\ra B(Sx))\Ra\all\y(A\land B(0)\ra\neg\neg B(x))$,
and by item 2 of Lemma \ref{3l33},
$\EBD+\Gamma\vdash\all\y(A\land B(x)\ra B(Sx))\Ra\all\y(A\land B(0)\ra B(x))$.
\end{enumerate}
\end{proof}

\begin{lemma}
\label{3l36}
Let $\km$ be a Kripke model $\km$ such that $\km\Vdash\EBD$,
$k$ be a node of $\km$ and $A$ be a $\Dz$ sentence in $\LL(k)$.
Then $k\Vdash A$ implies $\M\models A$, and
$k\Vdash\neg A$ implies $\M\models\neg A$.
\end{lemma}
\begin{proof}
We prove the Lemma by induction on $A$.
\begin{itemize}
\item
Assume that $A$ is prime. By the definition of the
forcing relation, $k\Vdash A$ is equivalent to $\M\models A$.
The result follows by Lemma \ref{3l32}.
\item
Assume that $A$ is of the form $B\ra C$. If $k\Vdash A$,
then by item 3 of Lemma \ref{3l33} we have $k\Vdash\neg B$ or
$k\Vdash C$. The induction hypothesis then gives
$\M\models\neg B$ or $\M\models C$, or equivalently
$\M\models A$. If $k\Vdash\neg A$, then by Lemma \ref{3l32},
we get $k\Vdash B$ and $k\Vdash\neg C$. Applying the induction hypothesis
yields $\M\models B$ and $\M\models\neg C$, hence $\M\models\neg A$.
\item
The case where $A$ is of the form $B\circ C$ for
$\circ\in\{\land,\lor\}$ can be done similar to the previous case.
\item
Assume that $A$ is of the form $\all x(x<t\ra B(x))$.
If $k\Vdash A$, then for all $k'\succ k$ and all $a\in D(k')$,
$k'\Vdash a<t$ implies $k'\Vdash B(a)$. In particular,
this holds for all $a\in D(k)$, which shows that $k\Vdash a<t$
implies $k\Vdash\top\ra B(a)$. Using item 1 of Lemma \ref{3l33}
then shows that for all $a\in D(k)$, $k\Vdash a<t$ implies
$k\Vdash B(a)$. Since $a<t$ is atomic, $k\Vdash a<t$
is equivalent to $\M\models a<t$. Applying the induction hypothesis
will then show that for all $a\in D(k)$, $\M\models a<t$ implies
$\M\models B(a)$, hence $\M\models A$. If $k\Vdash\neg A$,
then by Lemma \ref{3l32} we have $k\Vdash\ex x(x<t\land\neg B(x))$.
Thus, there is some $a\in D(k)$ such that $k\Vdash a<t$ and
$k\Vdash\neg B(a)$. Noting that $a<t$ is atomic and using the induction
hypothesis, we can conclude that there is some $a\in D(k)$ with
$\M\models a<t$ and $\M\models\neg B(a)$. This shows
$\M\models\ex x(x<t\land\neg B(x))$, and consequently $\M\models\neg A$.
\item
The case where $A$ is of the form $\ex x(x<t\land\neg B(x))$
can be done similar to the previous case.
\end{itemize}
\end{proof}

\begin{corollary}
\label{3c38}
For every Kripke model $\km$ such that $\km\Vdash\EBD$,
every node $k$ of $\km$ and
\begin{enumerate}
\item
every $\Dz$ sentence $A$ and every $\Sig$ sentence $B$ in $\LL(k)$,
if $\M\models A\Ra B$ then $k\Vdash A\Ra B$;
\item
every $\Sig$ formulas $A(\x)$ and $B(\x)$ in $\LL(k)$,
if $k\Vdash A(\x)\Ra B(\x)$ then $\M\models A(\x)\Ra B(\x)$.
\end{enumerate}
\end{corollary}
\begin{proof}
$ $
\begin{enumerate}
\item
Suppose that $B\equiv\ex\x C(\x)$ and $\M\models A\Ra B$,
where $C(\x)$ is a $\Dz$ formula in $\LL(k)$. This means that
$\M\models A$ implies $\M\models B$. Assume $k'\Vdash A$ for
any $k'\succeq k$. In case $k'=k$, we already have $k\Vdash A$.
In case $k'\succ k$, note that $k\Vdash\neg A$ implies
$k'\nVdash A$, which cannot happen. Thus, by Lemma \ref{3l32},
we must have $k\Vdash A$. So in any case, by Lemma \ref{3l36} we have
$\M\models A$, and thus $\M\models B$. Therefore, there is some
$\a\in D(k)$ with $\M\models C(\a)$. by Lemma \ref{3l36},
$k\Vdash\neg C(\a)$ implies $\M\models\neg C(\a)$, which cannot
happen. Hence, by Lemma \ref{3l32}, we must have $k\Vdash C(\a)$.
Thus $k\Vdash B$, and consequently $k'\Vdash A\Ra B$.
\item
Suppose that $A(\x)\equiv\ex\y C(\x,\y)$, $B(\x)\equiv\ex\z D(\x,\z)$, and
$k\Vdash A(\x)\Ra B(\x)$, where $C(\x,\y)$ and $D(\y,\z)$ are $\Dz$ formulas
in $\LL(k)$. This means that for all $k'\succeq k$ and all $\a\in D(k')$,
$k'\Vdash A(\a)$ implies $k'\Vdash B(\a)$, which in particular holds when
$k'=k$. Assuming $\M\models A(\a)$ for any $\a\in D(k)$, there is some
$\b\in D(k)$ such that $\M\models C(\a,\b)$. By Lemma \ref{3l36},
$k\Vdash\neg C(\a,\b)$ implies $\M\models\neg C(\a,\b)$, which cannot happen.
Therefore, by Lemma \ref{3l32}, we must have $k\Vdash C(\a,\b)$, and thus
$k\Vdash A(\a)$. Hence we get $k\Vdash B(\a)$; i.e. there exists $\cc\in D(k)$
such that $k\Vdash D(\a,\cc)$. Applying Lemma \ref{3l36} again, we can
conclude that $\M\models D(\a,\cc)$. Hence $\M\models B(\a)$,
which completes the proof.
\end{enumerate}
\end{proof}
\begin{theorem}
\label{3t39} For every Kripke model $\km$ such that
$\km\Vdash\EBD$ and a node $k$ of $\km$, $\M\models\ID$.
\end{theorem}
\begin{proof}
We show that $\Dz$-induction holds in $\M$; the other axioms and rules
are easy to check. Let $ A(x)$ be a $\Dz$ formula. By Lemma \ref{3l34},
$\EBD\vdash\ex x\neg A(x)\Ra\ex x(\neg A(x)\land\all y(y<x\ra A(x)))$.
Then by item 2 of Corollary \ref{3c38},
$\M\models\ex x\neg A(x)\Ra\ex x(\neg A(x)\land\all y(y<x \ra A(x)))$.
If $\M\models A(0)\land\all x(A(x)\ra A(Sx))$ and
$\M\models\ex x\neg A(x)$, then there exist $a\in D(k)$ such that
$\M\models\neg A(a)\land\all y(y<a \ra A(y))$, which leads to a contradiction,
since we have $\M\models a=0\lor\ex y\,a=Sy$.
\end{proof}
\begin{theorem}
\label{3t40}
For every $\Sig$ formulas $A(\x)$ and $B(\x)$,
if $\ID\vdash A(\x)\Ra B(\x)$, then $\EBD\vdash A(\x)\Ra B(\x)$.
\end{theorem}
\begin{proof}
Suppose that $A(\x)\equiv\ex\y C(\x,\y)$, where $C(\x,\y)$ is a
$\Dz$ formula, and assume that $\EBD\nvdash A(\x)\Ra B(\x)$.
Then there exists a Kripke model of $\EBD$, a node $k$ and some
$\a\in D(k)$ such that $k\nVdash A(\a)\Ra B(\a)$. As rule 12 is
valid in $\km$, there is a node $k'\succeq k$ and some $\b\in D(k')$
such that $k'\nVdash C(\a,\b)\Ra B(\a)$. So by item 1 of Corollary
\ref{3c38}, $\Mp\not\models C(\a,\b)\Ra B(\a)$. On the other hand,
since rule 12 is also valid in $\ID$, we have $\ID\vdash C(\x,\y)\Ra B(\x)$.
By Theorem \ref{3t39} $\Mp\models\ID$, and we must have
$\Mp\models C(\a,\b)\Ra B(\a)$, which leads to a contradiction.
\end{proof}
\begin{theorem}
\label{3t41}
There exists a $\Dz$ formula $\e(x,y,z)$ such that:
\begin{itemize}
\item $\ID\vdash \e(x,0,1)$
\item $\ID\vdash y>0\Ra\e(0,y,0)$
\item $\ID\vdash \e(x,y,z)\land\e(x,y,w)\Ra z=w$
\item $\ID\vdash \e(x,y,z)\Ra\e(x,Sy,x\cdot z)$
\end{itemize}
\end{theorem}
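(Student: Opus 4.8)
The plan is to exhibit a single $\Delta_0$ formula $\e(x,y,z)$ expressing the graph relation ``$z=x^y$'' and to verify the four recursion-type properties inside $\ID$. It is important that we are \emph{not} asked for totality (i.e.\ $\ex z\,\e(x,y,z)$), which is fortunate since exponentiation is not total over $\ID$; we only need the graph to be $\Delta_0$, functional, and to satisfy the stated recurrences. The heart of the matter is therefore the classical fact, due to Bennett and developed in \cite{ph:arithmetic} and \cite{gd:mrdp}, that the \emph{graph} of exponentiation is $\Delta_0$-definable: although the naive witness, namely a code for the whole sequence $1,x,x^2,\dots,x^y=z$, is in general superpolynomial in $x,y,z$, Bennett's coding nonetheless yields a $\Delta_0$ relation whose witness is bounded by a term in $x,y,z$.

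Concretely, I would use the $\Delta_0$ sequence-coding apparatus available in $\ID$ (for instance Gödel's $\beta$-function, or a bit-extraction coding), which supplies a $\Delta_0$ predicate for ``$s$ codes a sequence of length $Sy$'' together with the $\Delta_0$ extraction relation $(s)_i=v$, and set
\[
\e(x,y,z)\equiv\ex s\le t(x,y,z)\,\big[(s)_0=1\land(\all i<y)\,(s)_{Si}=x\cdot(s)_i\land(s)_y=z\big],
\]
where $t(x,y,z)$ is the Bennett bound. With this definition clauses (1) and (2) are immediate: for (1) the one-element code $s$ with $(s)_0=1$ witnesses $\e(x,0,1)$; for (2), when $y>0$ the code of $1,0,0,\dots,0$ witnesses $\e(0,y,0)$, using $0\cdot(s)_i=0$ at each step.

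For the functionality clause (3), given witnesses $s,s'$ for $\e(x,y,z)$ and $\e(x,y,w)$ I would prove by $\Delta_0$-induction on $i$ the formula $i\le y\ra(s)_i=(s')_i$, which is $\Delta_0$ in the parameters $x,y,s,s'$: the base case is $(s)_0=1=(s')_0$, and the step uses $(s)_{Si}=x\cdot(s)_i=x\cdot(s')_i=(s')_{Si}$; instantiating $i=y$ gives $z=(s)_y=(s')_y=w$. Note that inducting on $i$ with fixed witnesses keeps the induction formula $\Delta_0$, whereas inducting on $y$ directly would produce an unbounded $\all zw$ and fall outside $\Delta_0$-induction. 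For the recurrence (4), from a code $s$ witnessing $\e(x,y,z)$ I would produce, again inside $\ID$, the one-step extension $s^+$ agreeing with $s$ on indices $0,\dots,y$ and with $(s^+)_{Sy}=x\cdot z$; one checks that $s^+$ satisfies the defining conditions for $\e(x,Sy,x\cdot z)$ and that it respects the bound $t(x,Sy,x\cdot z)$, since the new last entry $x\cdot z$ is exactly $x^{Sy}$.

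The main obstacle is concentrated entirely in the first two paragraphs: constructing the $\Delta_0$ formula with a genuine term bound $t$ on the witness, and verifying that $\ID$ proves the coding closed under the one-step extension used in (4) within that bound. This is precisely Bennett's theorem on the $\Delta_0$-definability of exponentiation, so I would invoke its development in \cite{ph:arithmetic} (and \cite{gd:mrdp}) rather than reprove it; once the $\Delta_0$ definition and its $\ID$-provable coding lemmas are in hand, the four clauses follow by the routine base-case and bounded-induction arguments sketched above.
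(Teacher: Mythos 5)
There is a genuine gap, and it sits exactly in the part of your proposal that goes beyond citation. The displayed formula
\[
\e(x,y,z)\equiv\ex s\le t(x,y,z)\,\bigl[(s)_0=1\land(\all i<y)\,(s)_{Si}=x\cdot(s)_i\land(s)_y=z\bigr]
\]
cannot be correct for any term $t$ of the language $\{0,S,+,\cdot\}$, if $(s)_i$ is one of the standard general-purpose $\Delta_0$ extraction relations you name (G\"odel's $\beta$-function, bit-concatenation coding). A code $s$ for the full sequence $1,x,x^2,\dots,x^y$ has, under bit-concatenation, value of order $x^{y(y+1)/2}\approx z^{y/2}$, and under the $\beta$-function the situation is no better (the code is of the order of a product of $y$ moduli, each at least about $z/y$). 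Any term $t(x,y,z)$ is a polynomial of some fixed degree $d$, hence bounded by $\max(x,y,z,2)^{cd}$ for a constant $c$; so for large $y$ the only witnesses outgrow the bound and the bounded existential becomes false even when $z=x^y$. In other words, ``the Bennett bound'' you invoke does not exist: Bennett's theorem is precisely the statement that the graph of exponentiation is $\Delta_0$-definable \emph{without} any term-bounded code of the whole computation, and its actual definition has a completely different structure (a recursion tied to the binary expansion of the exponent, with number-theoretic devices keeping every quantifier polynomially bounded). The only escape would be to define an ``extraction'' relation $(s)_i=v$ that directly means $x^i=v$, but that presupposes the $\Delta_0$ definition of exponentiation and is circular.

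This matters for the rest of your argument, because your verifications of clauses (3) and (4) --- the $\Delta_0$-induction on $i$ comparing $(s)_i$ with $(s')_i$, and the one-step extension $s^+$ respecting the bound --- are tailored to the nonexistent naive formula and do not transfer to the genuine definition. For the real Bennett/Gaifman--Dimitracopoulos formula, proving functionality and the recurrence $\e(x,y,z)\Ra\e(x,Sy,x\cdot z)$ inside $\ID$ is exactly the laborious content of the appendix of \cite{gd:mrdp}, not a routine afterthought. Your fallback position (invoke \cite{ph:arithmetic} and \cite{gd:mrdp} rather than reprove) is in fact the paper's entire proof of Theorem~\ref{3t41}, which is by citation alone; but what must be imported from those sources is not a term-bounded sequence-coding apparatus (impossible) together with your verifications, rather it is the intricate $\Delta_0$ formula \emph{and} the four properties themselves, already established there in $\ID$.
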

\begin{proof}
See appendix in \cite{GD}.
\end{proof}
Let $\EXP$ be $\es(\e)$, i.e. $\ex z\e(x,y,z)$.
\begin{lemma}
\label{3l43}
$\EBA\vdash \EXP$
\end{lemma}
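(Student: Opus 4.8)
The plan is to prove totality of exponentiation by an induction on $y$ carried out inside $\EBA$, using the unrestricted induction available there, while importing the defining recurrences of the graph $\e$ from $\ID$ through the conservativity bridge already proved.

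First I would transfer the relevant clauses of Theorem \ref{3t41} to $\EBD$. The sequents $\top\Ra\e(x,0,1)$ and $\e(x,y,z)\Ra\e(x,Sy,x\cdot z)$ are implications between $\Delta_0$ (hence $\Sigma_1$) formulas provable in $\ID$, so by Theorem \ref{3t40} they are provable in $\EBD$, and therefore in $\EBA$, since $\EBA$ extends $\EBD$. Only these two clauses are needed.

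Next, set $A(y)\equiv\ex z\,\e(x,y,z)$, keeping the base $x$ as a free parameter, and induct on $y$. For the base case, $\EBA\vdash\e(x,0,1)$ yields $\EBA\vdash\top\Ra A(0)$ after one existential introduction. For the induction step I would derive $\EBA\vdash A(y)\Ra A(Sy)$ as follows: substitute the term $x\cdot z$ for the free occurrence of $w$ in the existential-introduction sequent $\e(x,Sy,w)\Ra\ex w\,\e(x,Sy,w)$ (rule 17), obtaining $\e(x,Sy,x\cdot z)\Ra\ex w\,\e(x,Sy,w)$, and compose this (rule 14) with the imported recurrence $\e(x,y,z)\Ra\e(x,Sy,x\cdot z)$ to get $\e(x,y,z)\Ra\ex w\,\e(x,Sy,w)$. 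Since $z$ is not free in $\ex w\,\e(x,Sy,w)$, the reversible rule 18 lets me quantify $z$ on the left, giving $\ex z\,\e(x,y,z)\Ra\ex w\,\e(x,Sy,w)$, which is $A(y)\Ra A(Sy)$ up to renaming the bound variable. Applying the induction rule 8 of $\BA$ then produces $A(0)\Ra A(y)$, and composing with the base case gives $\top\Ra A(y)$, i.e.\ exactly $\EXP$.

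The one point that needs care is the handling of the existential quantifier in $\BQC$: existential introduction is obtained by reading the reversible rule 18 backwards from an instance of axiom 1 and then substituting (rule 17), and one must rename the bound variable to a fresh $w$ before substituting $x\cdot z$, since rule 17 forbids substituting a term whose variables are captured by a quantifier in the conclusion. The conceptual heart of the argument, rather than a genuine obstacle, is that $A(y)$ is properly $\Sigma_1$ and not $\Delta_0$, so this induction cannot be run in $\EBD$ (whose induction is $\Delta_0$-restricted) — indeed $\ID$ does not prove exponentiation total — and it is precisely the unrestricted induction of $\EBA$ that supplies the extra strength, with Theorem \ref{3t40} used only to import the $\Delta_0$ recurrences of the graph.
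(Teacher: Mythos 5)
Your proposal is correct and follows essentially the same route as the paper: the paper likewise imports $\e(x,0,1)$ and $\e(x,y,z)\Ra\e(x,Sy,x\cdot z)$ into $\EBA$ via Theorems \ref{3t40} and \ref{3t41}, passes to $\ex z\,\e(x,y,z)\Ra\ex z\,\e(x,Sy,z)$, and applies the (unrestricted) induction rule to conclude $\EXP$. Your write-up merely makes explicit the $\BQC$-level existential manipulations and the (correct) observation that the induction formula is properly $\Sigma_1$, which the paper leaves implicit.
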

\begin{proof}
By Theorems \ref{3t40} and \ref{3t41}, we have $\EBA \vdash
\e(x,y,z)\Ra \e(x,Sy,x\cdot z)$. Then $\EBA \vdash \ex
z\e(x,y,z)\Ra\ex z\e(x,Sy,z)$ and so by the induction rule, we
have $\EBA \vdash \ex z\e(x,0,z)\Ra\ex z\e(x,y,z)$. Also by
Theorems \ref{3t40} and \ref{3t41}, we have $\EBA\vdash
\e(x,0,1)$. Hence $\EBA\vdash \EXP$.
\end{proof}
\begin{theorem}
\label{3t44} For every Kripke model $\km$ such that
$\km\Vdash \EBD+\EXP$ and a node $k$, $\M\models\ID+\EXP$
\end{theorem}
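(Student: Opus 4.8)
The plan is to decompose $\ID+\EXP$ into its two pieces and dispatch each using the transfer machinery already built for $\EBD$-forcing models. Since $\mathbf K\Vdash\EBD+\EXP$ trivially gives $\mathbf K\Vdash\EBD$, Theorem \ref{3t39} immediately delivers $\M\models\ID$ at the chosen node $k$. Thus the only new content is to establish $\M\models\EXP$, and everything reduces to pushing the forcing of the $\EXP$-axiom down into the classical structure $\M$.

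The key observation is that the $\EXP$ axiom $\top\Ra\ex z\,\e(x,y,z)$ is a sequent whose antecedent and succedent are both $\Sigma_1$: the antecedent $\top$ is atomic, hence $\Delta_0\subseteq\Sigma_1$, and the succedent $\ex z\,\e(x,y,z)$ is $\Sigma_1$ because $\e$ is a $\Delta_0$ formula by Theorem \ref{3t41}. This is precisely the shape to which the transfer Corollary \ref{3c38} applies. So the work is just to recognize $\EXP$ as a $\Sigma_1\Ra\Sigma_1$ sequent and invoke that corollary.

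Concretely, I would fix the node $k$ and arbitrary $a,b\in D(k)$. From $\mathbf K\Vdash\EXP$ we get $k\Vdash\EXP$; unwinding the definition of forcing for the sequent $\top\Ra\ex z\,\e(x,y,z)$ at $k$, with the free variables instantiated to $a,b$ (and using $D(k)\subseteq D(k')$ for $k'\succeq k$), yields $k\Vdash\top\Ra\ex z\,\e(a,b,z)$. Applying Corollary \ref{3c38} to the two $\Sigma_1$ formulas $\top$ and $\ex z\,\e(x,y,z)$ with $\a=(a,b)$ converts this into $\M\models\top\Ra\ex z\,\e(a,b,z)$. As $\M$ is a classical structure in which $\top$ holds, this is simply $\M\models\ex z\,\e(a,b,z)$. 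Since $a,b$ were arbitrary, $\M\models\all x\,y\,\ex z\,\e(x,y,z)$, i.e. $\M\models\EXP$. Combining with $\M\models\ID$ finishes the proof.

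There is no genuine obstacle here, as the heavy lifting resides in Theorem \ref{3t39} and Corollary \ref{3c38}. The only points requiring care are bookkeeping ones: verifying that $\EXP$ really meets the $\Sigma_1\Ra\Sigma_1$ hypothesis of Corollary \ref{3c38}, and confirming that forcing the sequent $\EXP$ at $k$ descends to each instantiation $a,b\in D(k)$ so that the corollary can be applied pointwise.
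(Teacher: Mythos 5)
Your proof is correct and follows essentially the same route as the paper: Theorem \ref{3t39} handles $\M\models\ID$, and Corollary \ref{3c38} transfers $k\Vdash\EXP$ to $\M\models\EXP$, noting that $\EXP$ is a $\Sigma_1\Ra\Sigma_1$ sequent. The paper states this in two lines; your version merely makes explicit the bookkeeping (instantiating at $a,b\in D(k)$ and checking the $\Sigma_1$ shape) that the paper leaves implicit.
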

\begin{proof}
By Theorem \ref{3t39}, $\M\models\ID$. Also, since $k\Vdash\EXP$,
for all $a,b\in D(k)$ there is $c\in D(k)$ with $k\Vdash\e(a,b,c)$,
which by Corollary \ref{3c38} implies $\M\models\e(a,b,c)$.
Therefore $\M\models\EXP$, and hence $\M\models\ID+\EXP$.
\end{proof}
\begin{theorem}
\label{3t45} For every $\Sig$ formulas $ A(\x)$ and
$ B(\x)$, if $\ID+\EXP\vdash A(\x)\Ra B(\x)$, then
$\EBD+\EXP\vdash A(\x)\Ra B(\x)$.
\end{theorem}
\begin{proof}
The proof is similar to the proof of Theorem \ref{3t40} by using
Theorem \ref{3t44}.
\end{proof}
\begin{theorem}
\label{3t46}
$\ID+\EXP\vdash\MRDP$.
\end{theorem}
\begin{proof}
See section 4 in \cite{GD}.
\end{proof}
\begin{theorem}
\label{3t47}
$\EBD+\EXP\vdash\MRDP$.
\end{theorem}
\begin{proof}
Consider a $\Sig$ formula $A$. By Theorem \ref{3t46}, there exists a
$\Ex$ formula $B$ such that $\ID+\EXP\vdash A\Lr B$. Since both
$A$ and $B$ are $\Sig$, the result follows from Theorem \ref{3t45}.
\end{proof}
\begin{corollary}
\label{3c48}
$\EBA\vdash \MRDP$.
\end{corollary}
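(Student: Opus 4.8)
The plan is to derive the corollary directly from the two results just established about $\EBD$, so that no new machinery is needed. First I would record that $\EBA$ extends $\EBD$: by Definition \ref{3d30}, $\EBD$ is obtained from $\EBA$ precisely by restricting the induction axiom and the induction rule to $\Delta_0$ formulas, while all the logical axioms and rules, the arithmetical axioms, and the extra sequent $\top\ra\bot\Ra\bot$ are kept unchanged. Hence every axiom and every instance of a rule of $\EBD$ is already an axiom or rule of $\EBA$, and therefore $\EBA\vdash\varphi$ whenever $\EBD\vdash\varphi$. Next I would invoke Lemma \ref{3l43}, which gives $\EBA\vdash\EXP$. Putting these two facts together, $\EBA$ proves every theorem of $\EBD+\EXP$.

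With this inclusion in hand, the corollary is immediate. Fix an arbitrary $\Sigma_1$ formula $A$. By Theorem \ref{3t47} we have $\EBD+\EXP\vdash\MRDP$, so there is a $\ex^+_1$ formula $B$ with the same free variables as $A$ such that $\EBD+\EXP\vdash A\Lr B$. By the containment noted above, $\EBA\vdash A\Lr B$ as well. Since $A$ was an arbitrary $\Sigma_1$ formula, this establishes exactly the schema $\EBA\vdash\MRDP$ in the sense of Notation \ref{3n26}.

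There is no genuine obstacle at this final step: the only thing requiring verification is the (immediate) fact that the theorems of $\EBD+\EXP$ are among those of $\EBA$, which follows from the definition of $\EBD$ together with Lemma \ref{3l43}. All of the substantive work has already been done earlier, namely proving the $\Delta_0$ version of the MRDP theorem over $\ID+\EXP$ (Theorem \ref{3t46}) and transporting it to $\EBD+\EXP$ through the forcing/normality analysis culminating in Corollary \ref{3c38} and Theorems \ref{3t44}--\ref{3t45}. Thus the corollary is a short bookkeeping consequence of Theorem \ref{3t47} and Lemma \ref{3l43}.
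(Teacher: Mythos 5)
Your proof is correct and follows exactly the paper's own argument: combine Lemma \ref{3l43} (which gives $\EBA\vdash\EXP$) with the observation that $\EBA$ extends $\EBD$, so that $\EBA\vdash\EBD+\EXP$, and then apply Theorem \ref{3t47}. The paper's proof is just a more compressed statement of the same two steps.
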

\begin{proof}
Straightforward by Lemma \ref{3l43} and Theorem \ref{3t47}.
\end{proof}
\begin{theorem}
\label{3t49}
$ $
\begin{enumerate}
\item For every $\Sig$ formulas $ A(\x)$ and
$ B(\x)$, if $\ISig\vdash A(\x)\Ra B(\x)$, then
$\EBA\vdash A(\x)\Ra B(\x)$.
\item For every $\Sig$ formula $A(\x)$ and every $\Dz$ formula $B(\x)$, 
if $\PA\vdash A(\x)\Ra B(\x)$, then $\EBA\vdash A(\x)\Ra B(\x)$.
\end{enumerate}
\end{theorem}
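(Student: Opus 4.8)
The plan is to prove both parts by the same contrapositive-plus-normality strategy used for Theorems \ref{3t40} and \ref{3t45}, upgrading the base theory from $\EBD$ to $\EBA$. Suppose the conclusion fails, say $\EBA\nvdash A\Ra B$. By the completeness half of Proposition \ref{sc} there is a Kripke model $\mathbf K\Vdash\EBA$, a node $k$ and $\a\in D(k)$ with $k\nVdash A(\a)\Ra B(\a)$. Since $\EBA$ extends $\EBD$ and both $A,B$ are $\Sigma_1$ (for part 2 they are even $\Delta_0\subseteq\Sigma_1$), Corollary \ref{3c38} transfers this to the attached classical structure: $\M\not\models A(\a)\Ra B(\a)$. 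Thus it suffices to exhibit, from $\M$, a classical model of the relevant theory ($\ISig$ for part 1, $\PA$ for part 2) still refuting $A(\a)\Ra B(\a)$, contradicting the assumed classical provability.

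For part 1 the key step is the normality lemma: \emph{every attached structure $\M$ of a model of $\EBA$ satisfies $\ISig$}. Here $\M\models\ID$ is already given by Theorem \ref{3t39}. To climb from $\ID$ to $\ISig$ I would exploit $\PR(\EBA)=\PRb$ (Theorem \ref{3t16}): every primitive recursive function is defined in $\EBA$ by an $\ex_1^+$ formula whose existence and uniqueness sequents, together with the sequents expressing its primitive-recursion equations, are $\Sigma_1$-sequents provable in $\EBA$, hence forced at $k$, hence — by Corollary \ref{3c38} — true in $\M$. Consequently $\M$ is a model of $\ID$ closed under all primitive recursive functions with their correct graphs, and it is a standard fact of classical reverse mathematics that such a model satisfies $\Sigma_1$-induction, i.e. $\ISig$. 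With $\M\models\ISig$ in hand, $\ISig\vdash A\Ra B$ contradicts $\M\not\models A(\a)\Ra B(\a)$, proving part 1 exactly as Theorem \ref{3t45} is deduced from Theorem \ref{3t44}.

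Part 2 cannot be handled the same way, and recognizing this is the crux. One cannot hope to prove ``$\M\models\PA$ for every attached $\M$'': if that held, then for any $f$ provably total in $\PA$ its defining $\Sigma_1$ existence and uniqueness sequents would be true in every $\M$, hence forced everywhere by Corollary \ref{3c38}, hence provable in $\EBA$ by completeness, making $f$ provably recursive in $\EBA$ and forcing $\PRb=\PR(\EBA)$ to contain every $\PA$-provably-total function — a contradiction. The correct, weaker statement is that \emph{every attached structure $\M$ is an initial segment of a model $N\models\PA$}. Since $\Pi_1$ sentences are inherited downward by initial segments, this yields $\M\models\mathrm{Th}_{\Pi_1}(\PA)$ without making $\M$ itself a model of $\PA$ (the $\PA$-witnesses for functions outside $\PRb$ are permitted to live in $N\setminus\M$, which is precisely why no conflict with $\PR(\EBA)=\PRb$ arises). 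As $\PA\vdash A\Ra B$ with $A,B\in\Delta_0$ makes $\forall\x\,(A\ra B)$ a $\Pi_1$ consequence of $\PA$, we obtain $\M\models\forall\x\,(A\ra B)$, contradicting $\M\not\models A(\a)\Ra B(\a)$.

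The main obstacle is therefore the embedding lemma of part 2: constructing, for an arbitrary attached structure $\M$ of a model of $\EBA$, an end-extension to a model of $\PA$. This is where the full, unrestricted-formula induction of $\EBA$ must be used — bounded induction alone gives only $\ID$ via Theorem \ref{3t39} — together with the classical behaviour of $\Delta_0$ formulas granted by $\top\ra\bot\Ra\bot$ (Lemma \ref{3l33}), which lets forcing at $k$ mimic classical truth in $\M$ on decidable matrices. I would build $N$ by a union-of-chain/compactness argument, iterating $\Sigma_1$-end-extensions so that every $\Pi_1$ consequence of $\PA$ is secured while $\M$ remains a proper initial segment; the delicate point is to guarantee the end-extension property simultaneously with satisfaction of all $\PA$-instances of induction in $N$. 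An alternative to keep in reserve is to route through the classical $\Pi_2$-conservativity of $\PA$ over $\HA$ followed by a separate $\Delta_0$-sequent comparison of $\HA$ with $\EBA$, which relocates but does not remove this difficulty.
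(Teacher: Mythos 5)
Your proposal does not go through: both parts hinge on lemmas that are respectively false and unproven, and the overall direction (prove normality of attached structures first, then deduce conservativity) inverts the paper's logic --- the paper proves Theorem \ref{3t49} proof-theoretically and only afterwards derives the normality statement (Theorem \ref{3t52}(1)) from it via Corollary \ref{3c50}.

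For part 1, your use of Theorem \ref{3t16} and Corollary \ref{3c38} does show that the attached structure $\M$ satisfies $\ID$ together with totality and uniqueness of a $\Sigma_1$-defined graph for each primitive recursive function, i.e.\ that $\M$ models a first-order version of primitive recursive arithmetic. But the ``standard fact'' you then invoke --- that any such model satisfies $\Sigma_1$-induction --- is false. Parsons' theorem gives only $\Pi_2$-conservativity of $\ISig$ over $\mathrm{PRA}$, and the inclusion $\mathrm{PRA}\subsetneq\ISig$ is strict: concretely, $\ISig$ is finitely axiomatizable, so if $\ID$ plus the totality schema for primitive recursive functions proved $\ISig$, compactness would give $\ID+\forall x\,\exists y\,A_F(x,y)\vdash\ISig$ for a single (increasing, exponentially bounded below) primitive recursive $F$; by a Parikh-style bounding argument the provably recursive functions of that theory are eventually dominated by standardly finite iterates $F^{(k)}$, whereas $\PR(\ISig)=\PRb$ (Theorem \ref{isig}) contains the diagonal $n\mapsto F^{(n)}(n)$, a contradiction. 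So $\M\models\ISig$ is simply not established; indeed the paper never claims $\ISig$-normality, only $\ID+{\bf Th}_{\Pi_2}(\ISig)+{\bf Th}_{\Pi_1}(\PA)$-normality, and that as a \emph{consequence} of Theorem \ref{3t49}, so your route is also circular relative to the paper. (A secondary gap: Theorem \ref{3t16} yields existence and uniqueness sequents in $\EBA$, not the recursion-equation sequents you additionally assume.) The paper's actual proof of part 1 needs no models at all: by MRDP in $\ID+\EXP$ (Theorem \ref{3t46}) replace $A,B$ by positive $C,D$; since $\ISig\dashv\vdash\IEx+\U$ (Corollary \ref{3c8}), the positive sequent $C\Ra D$ is provable in $\IEx+\U$, hence in $\BA+\U$ by the transfer Lemma \ref{2l6.1}, hence in $\EBA$ because $\EBA\vdash\U$; finally $A\Lr C$ and $B\Lr D$ hold in $\EBA$ by Theorem \ref{3t45} and Lemma \ref{3l43}.

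For part 2, your observation that full $\PA$-normality must fail is correct and matches Theorem \ref{3t52}(2), but the replacement lemma --- that every attached $\M$ is an initial segment of a model of $\PA$ --- is exactly the step you leave open; the chain/compactness sketch does not supply the needed idea, as you acknowledge. Moreover, the natural route to such an end-extension (Wilkie-type embedding theorems) takes $\M\models{\bf Th}_{\Pi_1}(\PA)$ as a \emph{hypothesis}, and that is precisely the content of part 2 (via Corollary \ref{3c50}), so the plan as described is circular. The paper's proof of part 2 is instead a direct transfer: it combines Theorem 3.14 of \cite{kn:AH} (a result importing certain $\PA$-provable sequents into $\EBA$) with the $\Delta_0$-decidability $\EBD\vdash A\lor\neg A$ of Lemma \ref{3l33}(2) to pass from $\PA\vdash A\Ra B$ to $\EBA\vdash A\Ra B$ for $\Delta_0$ formulas. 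Your fallback (routing through $\HA$ and conservativity of $\PA$ over $\HA$) is closer in spirit to this, but the missing ingredient is that external transfer theorem, not a model-theoretic end-extension construction.
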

\begin{proof}
$ $
\begin{enumerate}
\item Suppose that $\ISig\vdash A(\x)\Ra B(\x)$. By Theorem \ref{3t46}:
\begin{itemize}
\item $\ID+\EXP\vdash A(\x) \Lr C(\x)$ for some $\Ex$ formula $C(\x)$,
\item $\ID+\EXP\vdash B(\x) \Lr D(\x)$ for some $\Ex$ formula $D(\x)$.
\end{itemize}
Since $\ISig\vdash \ID+\EXP$, we have $\ISig\vdash C(\x)\Ra D(\x)$.
By Corollary \ref{3c8}, $\IEx+\U\vdash C(\x)\Ra D(\x)$.
So by Corollary \ref{2c6.1}, $\BA+\U\vdash C(\x)\Ra D(\x)$.
Since $\EBA\vdash \U$, we get $\EBA\vdash C(\x)\Ra D(\x)$.
On the other hand, by Theorem \ref{3t45}:
\begin{itemize}
\item $\EBA\vdash A(\x) \Lr C(\x)$,
\item $\EBA\vdash B(\x) \Lr \ D(\x)$.
\end{itemize}
Hence $\EBA\vdash A(\x)\Ra B(\x)$.
\item Suppose that $A(\x)\equiv\ex\y C(\x,\y)$ for some $\Dz$ formula
$C(\x,\y)$, and $\PA\vdash A(\x)\Ra B(\x)$. By rule 12,
$\PA\vdash C(\x,\y)\Ra B(\x)$, hence by Proposition \ref{EBA-gt},
$\EBA\vdash\gt C(\x,\y)\Ra\gt B(\x)$. By Lemma \ref{3l32},
every $\Dz$ formula is equivalent in $\EBA$ to its G\"odel translation.
Therefore $\EBA\vdash C(\x,\y)\Ra B(\x)$, and by rule 12,
$\EBA\vdash A(\x)\Ra B(\x)$.\qedhere
\end{enumerate}
\end{proof}
\begin{corollary}
\label{3c50}
$ $
\begin{enumerate}
\item $\EBA$ proves all $\Pit$ theorems of $\ISig$.
\item $\PA$ is $\Pio$-conservative over $\EBA$.
\end{enumerate}
\end{corollary}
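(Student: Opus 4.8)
The plan is to reduce both parts directly to Theorem \ref{3t49}, which is the essential bridge from classical provability to provability in $\EBA$. Recall that a $\Pi_2$ sentence has the shape $\all\x(\top\ra A(\x))$ with $A$ a $\Sigma_1$ formula, and that (by the analogous convention) a $\Pi_1$ sentence has the same shape with $A$ a $\Delta_0$ formula. The whole argument is then a matter of translating between the prenex form of these sentences and the sequent form $\top\Ra A(\x)$ to which Theorem \ref{3t49} applies.

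First I would treat part 1. Suppose $\ISig\vdash\varphi$, where $\varphi\equiv\all\x(\top\ra A(\x))$ is $\Pi_2$. Classically, proving the universal closure is the same as proving the matrix with free variables, so this gives the sequent $\ISig\vdash\top\Ra A(\x)$ in the free variables $\x$. Since $\top$ is atomic and hence $\Sigma_1$, and $A(\x)$ is $\Sigma_1$, Theorem \ref{3t49}(1) applies to the two $\Sigma_1$ formulas $\top$ and $A(\x)$, yielding $\EBA\vdash\top\Ra A(\x)$. It then remains to re-introduce the universal quantifier inside $\EBA$: from $\top\Ra A(\x)$ one obtains $\top\land\top\Ra A(\x)$, and applying rule 19 of $\BQC$ (with outer antecedent $\top$, which has no free variable among $\x$) gives $\top\Ra\all\x(\top\ra A(\x))$, i.e.\ $\EBA\vdash\varphi$. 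This establishes the $\Pi_2$-conservativity of $\ISig$ over $\EBA$.

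Part 2 is entirely parallel, the only differences being that a $\Pi_1$ sentence $\all\x(\top\ra A(\x))$ has $A$ in $\Delta_0$, and that I would invoke Theorem \ref{3t49}(2) in place of (1). Since $\top$ and $A(\x)$ are both $\Delta_0$, from $\PA\vdash\top\Ra A(\x)$ one gets $\EBA\vdash\top\Ra A(\x)$, and the same rule 19 step recovers $\EBA\vdash\all\x(\top\ra A(\x))$. Hence $\PA$ is $\Pi_1$-conservative over $\EBA$.

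The only genuine care needed, and thus the main (if modest) obstacle, is the bookkeeping of the two quantifier translations: first recognizing that a classical proof of a $\Pi_2$ (resp.\ $\Pi_1$) sentence delivers exactly the sequent $\top\Ra A(\x)$ in the free variables, with $\top$ and $A(\x)$ falling in the right syntactic class, and then confirming that rule 19 of $\BQC$ legitimately restores the leading $\all\x$ over $\EBA$. Once these are in place the statement follows immediately, with no further computation required.
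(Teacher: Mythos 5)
Your proposal is correct and is exactly the argument the paper intends: its proof of this corollary is simply ``Straightforward by Theorem \ref{3t49},'' and you have filled in the same routine steps (passing from the $\Pi_2$, resp.\ $\Pi_1$, sentence to the open sequent $\top\Ra A(\x)$, applying Theorem \ref{3t49}, and restoring the universal quantifier via rule 19 of $\BQC$). The bookkeeping you flag, including that $\top$ lies in the required class in each case and that the side condition of rule 19 is met since $\top$ has no free variables, is handled correctly.
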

\begin{proof}
Straightforward by Theorem \ref{3t49}.
\end{proof}
It is worth mentioning that every Kripke model
of $\HA$ is $\ID+\Th_{\Pit}(\PA)$-normal (see
Theorem 3.1 in \cite{W96}) and also $\HA$ is complete with
respect to $\PA$-normal Kripke models (see Theorem 6
in \cite{B93}). Having these facts, we have the following
results.
\begin{theorem}
\label{3t52}
$ $
\begin{enumerate}
\item Every Kripke model of $\EBA$ is
$\ID+\Th_{\Pit}(\ISig)+\Th_{\Pio}(\PA)$-normal.
\item $\EBA$ is not complete with respect to $\Th_{\Pit}(\PA)$-normal
Kripke models. In particular, there exists a Kripke model of $\EBA$
which is not $\Th_{\Pit}(\PA)$-normal.
\end{enumerate}
\end{theorem}
\begin{proof}
$ $
\begin{enumerate}
\item Straightforward by Theorem \ref{3t39} and Corollary \ref{3c50}.
\item It is well-known that the Ackermann function is provably total
recursive in $\PA$ (see \cite{W97}). Therefore by Theorem \ref{3t46}
there exists a $\Ex$ formula $ B(x,y,z)$ such that
\begin{itemize}
\item $\PA\vdash \es(B)$,
\item $\PA \vdash \us(B)$,
\item $\nn\models B(a,b,\Psi(a,b))$, for every $a,b\in\nn$,
\end{itemize}
where $\Psi(n,m)$ is the Ackermann function.
Note that provability of $\es(B)$ is equivalent to provability of
the $\Pit$ sentece $\all xy\ex z\, B(x,y,z)$. If $\EBA$ is complete with
respect to $\Th_{\Pit}(\PA)$-normal Kripke models, then we must have
$\EBA\vdash\es(B)$. So by Corollary \ref{3c15}, there exists a unary
primitive recursive function $f$ such that
$\nn\models\all xy\,B(x,y,f(\langle x,y\rangle))$.
Because $\PA\vdash\us(B)$, $f(\langle n,m\rangle)=\Psi(n,m)$,
which leads to a contradiction,
since the Ackermann function is not primitive recursive.
Hence $\EBA$ is not complete with respect to $\Th_{\Pit}(\PA)$-normal
Kripke models.
\end{enumerate}
\end{proof}

We know that for every natural number $n$, there is a $\Sig$ formula
$\Prov_{\ISign}(x,y)$, encoding provabality relation of
$\ISign$ in the language of arthmetic (see Section 4 of Chapter I in
\cite{HP}). Consistency of $\ISign$ can be encoded by
$\Con_{\ISign} \equiv \all x\neg\Prov_{\ISign}
(x,\ulcorner\bot\urcorner)$. In fact, by the MRDP theorem, we can assume that
$\Prov_{\ISign}(x,y)$ is of the form $\ex\z\, s_n(\z,x,y)=t_n(\z,x,y)$, 
over any theory in which the MRDP theorem holds, in particular $\EBA$.
Note that $\Iop$ proves
$s=s' \lor t=t' \Lr s\cdot t+s'\cdot t' = s\cdot t'+s'\cdot t$ and
$s=s' \land t=t' \Lr s^2+s'^2+t^2+t'^2 = 2s\cdot s'+2t\cdot t'$, and hence
over any strong enough theory, every $\Ex$ formula is provably equivalent
to a formula consisting of a block of existential quantifiers followed by
an equality of two terms.
\begin{lemma}
\label{EBA-Con}
$\EBA \vdash \Con_{\ISign}$.
\end{lemma}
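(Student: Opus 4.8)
The plan is to read $\Con_{{\bf I}\Sigma_n}$ as a $\Pi_1$ sentence and then invoke the $\Pi_1$-conservativity of $\PA$ over $\EBA$ (Corollary \ref{3c50}, item 2), the arithmetical content being supplied entirely by the classical fact that $\PA$ proves the consistency of each fragment ${\bf I}\Sigma_n$. Concretely, since $\EBA\vdash\MRDP$ (Corollary \ref{3c48}), the remark preceding the lemma lets me take $\Prov_{{\bf I}\Sigma_n}(x,\ulcorner\bot\urcorner)$ to be of the shape $\ex\z\,s_n(\z,x,\ulcorner\bot\urcorner)=t_n(\z,x,\ulcorner\bot\urcorner)$; write $\rho(x,\z)$ for this $\Delta_0$ matrix $s_n=t_n$ and set $S\equiv\all x\z\,(s_n<t_n\lor t_n<s_n)$, a genuine $\Pi_1$ sentence (a universal block over a quantifier-free positive matrix).

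First I would record the two directions of the reduction separately, because in basic logic the interchange $\neg\ex\z\,\rho\lr\all\z\,\neg\rho$ cannot be taken for granted in both directions, and only one is needed. Using Lemma \ref{lequiv} together with completeness (Proposition \ref{sc}) I may replace $\Prov_{{\bf I}\Sigma_n}(x,\ulcorner\bot\urcorner)$ by its normal form inside $\Con_{{\bf I}\Sigma_n}$, so that $\EBA\vdash\Con_{{\bf I}\Sigma_n}\Lr\all x\,\neg\ex\z\,\rho$. Then axiom 13 of $\BQC$ (with consequent $\bot$) gives $\EBA\vdash\all x\z\,\neg\rho\Ra\all x\,\neg\ex\z\,\rho$, while Lemma \ref{3l33}(1) supplies $\EBA\vdash(s_n<t_n\lor t_n<s_n)\Ra\neg\rho$, which lifts under the quantifier block (rule 19 and axiom 8) to $\EBA\vdash S\Ra\all x\z\,\neg\rho$. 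Chaining these yields the one implication I actually use, namely $\EBA\vdash S\Ra\Con_{{\bf I}\Sigma_n}$.

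Next I would establish $\PA\vdash S$. For each fixed (meta-level) $n$ it is standard that $\PA\vdash\Con_{{\bf I}\Sigma_n}$; indeed already ${\bf I}\Sigma_{n+1}\vdash\Con_{{\bf I}\Sigma_n}$ via a $\Sigma_n$ partial truth definition and the corresponding reflection principle (see \cite{ph:arithmetic}). The rewriting of $\Con_{{\bf I}\Sigma_n}$ into $S$ is classically valid (MRDP holds in $\PA$, and the interchange $\neg\ex\z\,\rho\lr\all\z\,\neg\rho$ together with the trichotomy $s_n=t_n\lor s_n<t_n\lor t_n<s_n$ are classical), so $\PA\vdash\Con_{{\bf I}\Sigma_n}\lr S$ and hence $\PA\vdash S$. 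Since $S$ is $\Pi_1$, Corollary \ref{3c50}(2) gives $\EBA\vdash S$, and combining with $\EBA\vdash S\Ra\Con_{{\bf I}\Sigma_n}$ from the previous paragraph yields $\EBA\vdash\Con_{{\bf I}\Sigma_n}$.

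I expect the one genuinely nontrivial input to be the external metamathematical fact $\PA\vdash\Con_{{\bf I}\Sigma_n}$, in which all the arithmetical strength is concentrated; everything else is bookkeeping. Within that bookkeeping, the single point deserving care is the reduction carried out entirely inside $\EBA$: because basic logic has no unrestricted modus ponens, I must use only the safe direction of the $\neg\ex/\all\neg$ interchange (axiom 13) and justify the replacement of $\Prov$ by its MRDP normal form through the semantic substitution Lemma \ref{lequiv} rather than by naive syntactic manipulation.
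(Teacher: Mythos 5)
Your skeleton is the same as the paper's: both proofs start from the classical fact ${\bf I}\Sigma_{n+1}\vdash\Con_{{\bf I}\Sigma_n}$, fix $\Prov_{{\bf I}\Sigma_n}$ in MRDP normal form so that the matrix of the consistency statement is $\Delta_0$, and pull the resulting $\Pi_1$ content into $\EBA$ through $\Pi_1$-conservativity of $\PA$ over $\EBA$ (the paper quotes Theorem 3.14 and Corollary 3.5 of \cite{kn:AH}; your Corollary \ref{3c50}(2) rests on that same Theorem 3.14, so the transfer mechanism is identical). However, your in-$\EBA$ bookkeeping has a genuine gap: the two quantifier steps do not compose. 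From the pointwise sequent $(s_n<t_n\lor t_n<s_n)\Ra\neg\rho$ of Lemma \ref{3l33}(1), rule 19 and axiom 8 only deliver the \emph{guarded} formula: rule 19 gives $\top\Ra\all x\z\,(D\ra(\rho\ra\bot))$ and axiom 8 composes this with $S=\all x\z\,(\top\ra D)$ to give $S\Ra\all x\z\,(\top\ra(\rho\ra\bot))$, with the guard $\top\ra$ intact. But axiom 13 needs as input the \emph{unguarded} formula $\all x\z\,(\rho\ra\bot)$; applied to the guarded one it is blocked, since there the inner implication is $\top\ra(\rho\ra\bot)$ and the variables to be exported occur free in its consequent, violating the side condition of axiom 13. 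Passing from $\all x\z\,(\top\ra(\rho\ra\bot))$ to $\all x\z\,(\rho\ra\bot)$ is not basic-logic bookkeeping at all: it fails in $\BA$ (a successor node with no further successors forces $\rho\ra\bot$ vacuously even if it forces $\rho$), and it is exactly the point where $\top\ra\bot\Ra\bot$ must enter, e.g.\ via $\EBA\vdash A\land(A\ra\bot)\Ra\bot$ (compose the $\BQC$-valid $A\land(A\ra\bot)\Ra(\top\ra\bot)$ with the $\EBA$ axiom) or via Lemma \ref{3l33}(3). As written, your chain from $S$ to $\Con_{{\bf I}\Sigma_n}$ never invokes the $\EBA$ axiom at this point, so it is broken.

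The gap is local, and your own tools close it more simply than the detour through the positive matrix $S$. Since $\Delta_0$ is closed under $\ra$ and $\bot$ is atomic, $s_n=t_n\Ra\bot$ is already a sequent of $\Delta_0$ formulas; $\PA$ proves it, so Theorem \ref{3t49}(2) (the result behind your Corollary \ref{3c50}(2)) gives $\EBA\vdash s_n=t_n\Ra\bot$ directly, with no positivization and no use of Lemma \ref{3l33}(1). Then rule 19 applied to $\top\land s_n=t_n\Ra\bot$ yields the unguarded $\EBA\vdash\top\Ra\all x\z\,(s_n=t_n\ra\bot)$, and axiom 13 iterated over $\z$ together with sequent transitivity gives $\EBA\vdash\top\Ra\all x\,(\ex\z\,s_n=t_n\ra\bot)$, which is $\Con_{{\bf I}\Sigma_n}$ in the normal form fixed by the remark. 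This is in substance what the paper's one-line appeal to Corollary 3.5 of \cite{kn:AH} accomplishes.
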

\begin{proof}
By Corollary 4.34 of Chapter I in \cite{HP},
$\ISignp\vdash\Con_{\ISign}$, and thus $\PA\vdash\Con_{\ISign}$.
Since $\Con_{\ISign}$ is $\Pio$, by item 2 of Corollary \ref{3c50},
$\EBA\vdash\Con_{\ISign}$.
\end{proof}

\begin{corollary}
\label{EBA-NoTrans}
If $\ISign$ is consistent, there is no translation $\trns{(\cdot)}$ from the
set of formulas of arithmetic to itself such that:
\begin{itemize}
\item for every prime formula $A$,
$\ISign\vdash\trns{A}\Lr A$,
\item for every formulas $A$ and $B$,
if $\EBA\vdash A\Ra B$ then $\ISign\vdash \trns A\Ra\trns B$.
\end{itemize}
\end{corollary}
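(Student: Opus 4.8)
The plan is to assume, toward a contradiction, that ${\bf I}\Sigma_n$ is consistent and that a translation $(\cdot)^t$ with both properties exists, and then to derive ${\bf I}\Sigma_n\vdash\Con_{{\bf I}\Sigma_n}$, contradicting G\"odel's second incompleteness theorem. The crucial input is not $\EBA\vdash\Con_{{\bf I}\Sigma_n}$ in its quantified form (Lemma \ref{EBA-Con}) but the provability in $\EBA$ of its \emph{open} matrix. Recall that over $\EBA$ we may take $\Prov_{{\bf I}\Sigma_n}(x,\ulcorner\bot\urcorner)$ to be $\ex\z\,s_n(\z,x,\ulcorner\bot\urcorner)=t_n(\z,x,\ulcorner\bot\urcorner)$; write $s_n,t_n$ for these two terms, which have $x,\z$ free. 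The proof of Lemma \ref{EBA-Con} yields $\PA\vdash\all x\z\,\neg\,s_n=t_n$, so by classical instantiation $\PA\vdash (s_n=t_n)\Ra\bot$, a sequent between $\Delta_0$ formulas. Hence, by Theorem \ref{3t49}(2), $\EBA\vdash (s_n=t_n)\Ra\bot$, and applying rule 19 of $\BQC$ with empty prefix gives $\EBA\vdash\top\Ra\neg(s_n=t_n)$; that is, $\EBA$ proves the open formula $\neg(s_n=t_n)$ with $x,\z$ free.

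Next I would push this through the translation, using a provably true closed inequality as a vehicle. Since $\EBA\vdash\top\Ra\neg(s_n=t_n)$ and $\EBA\vdash\neg(S0=0)\Ra\top$ (the axiom $A\Ra\top$), transitivity (rule 14) gives $\EBA\vdash\neg(S0=0)\Ra\neg(s_n=t_n)$. The second property of $(\cdot)^t$ then yields ${\bf I}\Sigma_n\vdash(\neg(S0=0))^t\Ra(\neg(s_n=t_n))^t$. The first property, applied to the term pairs $(S0,0)$ and $(s_n,t_n)$, gives both ${\bf I}\Sigma_n\vdash(\neg(S0=0))^t\Lr\neg(S0=0)$ and ${\bf I}\Sigma_n\vdash(\neg(s_n=t_n))^t\Lr\neg(s_n=t_n)$, so combining these we obtain ${\bf I}\Sigma_n\vdash\neg(S0=0)\Ra\neg(s_n=t_n)$.

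The key point is that the antecedent is now a true closed inequality: ${\bf I}\Sigma_n\vdash\neg(S0=0)$, so modus ponens in classical ${\bf I}\Sigma_n$ yields ${\bf I}\Sigma_n\vdash\neg(s_n=t_n)$, still with $x,\z$ free. Classical generalization gives $\all x\z\,\neg(s_n=t_n)$, which is classically equivalent to $\all x\,\neg\ex\z\,(s_n=t_n)$, i.e.\ $\Con_{{\bf I}\Sigma_n}$. Thus ${\bf I}\Sigma_n\vdash\Con_{{\bf I}\Sigma_n}$, and since ${\bf I}\Sigma_n$ is consistent this contradicts G\"odel's second incompleteness theorem. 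Therefore no such translation exists.

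The step I would watch most carefully is the passage from $\EBA\vdash\Con_{{\bf I}\Sigma_n}$ to the provability of the open matrix $\neg(s_n=t_n)$ in $\EBA$: rather than stripping the $\BQC$ universal quantifier directly (delicate, since $\BA$ lacks Modus Ponens and faithfulness is only stated for $\BA$), I route through $\PA$ and the $\Delta_0$-transfer of Theorem \ref{3t49}(2). The secondary subtlety is that the first property controls $(\cdot)^t$ \emph{only} on negated equalities; the device of inserting the provable inequality $\neg(S0=0)$ as antecedent is precisely what lets me discharge the translated hypothesis $(\neg(S0=0))^t$ without knowing anything further about the translation, in particular without having to compute $\top^t$.
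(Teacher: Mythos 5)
Your proof is correct and takes essentially the same route as the paper's: obtain $\EBA\vdash\neg(s_n=t_n)$ (the open matrix of $\Con_{{\bf I}\Sigma_n}$, extracted from the proof of Lemma \ref{EBA-Con} via $\PA$ and the $\Delta_0$-transfer of Theorem \ref{3t49}), push it through the translation using both hypotheses, and contradict G\"odel's second incompleteness theorem. The only difference is one of care: the paper's proof passes directly from the $\EBA$-provability of $\neg(s_n=t_n)$ to ${\bf I}\Sigma_n\vdash\neg(s_n=t_n)$ without saying how the translated antecedent (e.g.\ $\top^t$) is discharged, whereas your device of routing through the sequent $\neg(S0=0)\Ra\neg(s_n=t_n)$ handles this explicitly --- a worthwhile clarification, since the first hypothesis constrains $(\cdot)^t$ only on negated equalities.
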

\begin{proof}
By lemma \ref{EBA-Con}, we have $\EBA\vdash
\neg s_n(\z,x,\ulcorner\bot\urcorner)=t_n(\z,x,\ulcorner\bot\urcorner)$.
This implies that $\EBA\vdash s_n(\z,x,\ulcorner\bot\urcorner)=
t_n(\z,x,\ulcorner\bot\urcorner)\Ra\bot$ as $\EBA\vdash \top\ra\bot\Ra\bot$.
If such a translation exists, we must have $\ISign \vdash
\neg s_n(\z,x,\ulcorner\bot\urcorner)=t_n(\z,x,\ulcorner\bot\urcorner)$ and
hence $\ISign \vdash \Con_{\ISign}$, which contradicts
G\"odel's second incompleteness theorem.
\end{proof}

Corollary \ref{EBA-NoTrans} shows that a proposition similar to
Proposition \ref{2p5} cannot be proved for $\EBA$. Thus to find 
an upper bound on the class of the provably total functions of $\EBA$, we
cannot use techniques similar to those we used for $\BA$ and $\BA+\U$.
\section{Final Remarks}
We have proved that the provably total functions of $\BA$ are primitive
recursive and definable in $\BA$ by geometric formulas. Our attempt to
characterize the provably total functions of $\BA$
was not successful, however we could do it for some extensions of $\BA$.
We introduced alternative versions of the uniqueness sequent,
for some of which we also proved the geometric definability of
the provably total functions. We further proved that provably total functions
of $\BA$ in those alternative senses are primitive recursive.
It is worth mentioning that the primitive recursive realizability technique
introduced in \cite{S03} (see Definition \ref{3d12}), that we also applied
to $\EBA$ (see Theorem \ref{3t14}),
is used to analyze the provability of existence sequents. Therefore,
regardless of which definition we choose for the uniqueness sequent,
the primitive recursive upper bound can be proven to exist for the class
of the provably total functions of $\BA$.

One may expect that taking one of the alternative definition for uniqueness
sequents mentioned above, characterizing the class of the provably total
functions of $\BA$ reduces to that of $\BA+\U$ for the following reason.
The formula $x+y=x+z\ra y=z$, which is provable in $\BA$, is actually a
weakening of the axiom $\U$. We just note that such a reduction may not be
a straightforward procedure, because there are sequents consisting of
geometric formulas provable in $\BA+\U$, such that the weakened conditional
formula is not provable in $\BA$. This may mean that for the mentioned
reduction, we can neither use arguments based on derivations containing only
geometric formulas, nor arguments based on local analysis of Kripke models.
As an example, note that $\BA+\U\vdash\ex x (x+y=x+z)\Ra y=z$, but
$\BA\nvdash\ex x(x+y=x+z)\ra y=z$, since the formula is refuted in the
Kripke model consisting of two irreflexive nodes, the below one with the
structure $\nn$ and the above one with the structure $\N$.

We cannot reduce the characterization of the provably total
functions of $\BA$ to that of $\IEx$, as we did between $\BA+\U$ and
$\IEx+\U$. The reason is that the MRDP theorem does not hold in $\IEx$
(see Corollary \ref{3c000}), and thus the defining formulas of its provably
total functions may \emph{not} be equivalent to any geometric formulas,
and so we lose the benefits of Corollary \ref{2c6.1}.

Our choice of the additional axioms for $\BAc$ (axioms 46 and 47)
is not canonical. One can fix other axioms for formalizing the
properties of cut-off subtraction; but that may change results on
the class of provably total functions and the MRDP theorem for $\BAc$.
One alternative is using the axioms $x<y\Ra x\dotminus y=0$ and
$y\le x\Ra y+(x\dotminus y)=x$ instead, which may more resemble
the definition given for cut-off subtraction at the end of Subsection
\ref{AaRoBA}. Another alternative would be the three axioms
$x\dotminus0=x$, $x\dotminus y=0\Ra x\dotminus Sy=0$ and
$x\dotminus y=Sz\Ra x\dotminus Sy=z$, which may more resemble the
definition of cut-off subtraction by primitive recursion using
the predecessor function, as mentioned before Proposition \ref{pd}.
It seems that in both of these alternative cases, $x\dotminus x=0$
may not be derivable from the resulting theory. An idea for seeing
this would be considering the expansion $\N_\8$ of $\N$ in
which $n\dotminus\infty=0$ and $\infty\dotminus n=\infty$ for all
$n\in\nn$, and $\8\dotminus\8=\8$. If one could show that $\N_\8$
is a model of $\IEx$, it would result in unprovability of $x\dotminus x=0$
in the theory. Even adding $x\dotminus x=0$ to the other axioms might
not be satisfactory: consider an expansion $\N_0$ of $\N$ similar to
$\N_\8$, only this time with $\8\dotminus\8=0$. While this is a
model of the additional axiom $x\dotminus x=0$, it does not satisfy
$(\8+\8)\dotminus\8=\8$. Therefore if $\N_0$ is a model of $\IEx$,
we can show unprovability of $(x+y)\dotminus x=y$ in the corresponding
arithmetical theory. Verifying whether $\N_\8$ and $\N_0$ are
models of $\IEx$ or not is out of the scope of what we intend to do here.
The important point we want to make is that our results on $\BAc$
strongly rely on the fact that $\BAc$ is capable of proving $\U$,
which in turn paves the way for eliminating cut-off subtraction
from the language. The alternative axiomatizations might look natural
when thinking about cut-off subtraction in the context of classical
or intuitionistic arithmetical theories, but not strong enough in the
context of basic arithmetic.

And lastly, the realizability technique used in \cite{S05} to find a bound
on the provably total functions of $\BA$ leads to no satisfactory result. To
show this, we define the notion of $D$-bounded recursive realizability, which
is a generalization of the notion defined in Definition 3.4 of \cite{S05}.
It is worth mentioning that the main results of \cite{S05}
are already disproved in \cite{AKS}, by presenting an explicit sequent
which is a counterexample to the soundness of $\BAw$ with respect to the
$D$-bounded recursive realizability, when $D(n,m)=n^m+m$.
\begin{definition}
\label{3dddddddddd}
Let $\varphi_n$ be the $n$-th partial recursive function, $\pi_1$ and
$\pi_2$ be the projections of the Cantor pairing function
$\langle x,y\rangle=\frac{1}{2}(x+y)(x+y+1)+y$, and
$D:\nn^2\to\nn$ be a primitive recursive function. For a sequence
$\x=(x_1,\dots,x_m)$, $\varphi_n(\x)$ is understood as
$\varphi_n(\langle x_1,\langle x_2,\dots,
\langle x_{m-1},x_m\rangle\rangle\rangle)$. Let
$$\B(n)\equiv\varphi_{\pi_1(n)}(x)\downarrow\ \land\
\all x(\varphi_{\pi_1(n)}(x)\le D(x,\pi_2(n)))\text,$$
and define $x\:\rd A$ by induction on complexity of a formula $A$:
\begin{itemize}
\item $x\:\rd A\equiv A$, for prime $ A$.
\item $x\:\rd (B \land C) \equiv (\pi_1(x) \rd B)\land (\pi_2(x) \rd C)$.
\item $x\:\rd (B \lor C) \equiv (\pi_1(x)=0 \land \pi_2(x)\: \rd B)
\lor (\pi_1(x)\ne 0 \land \pi_2(x)\: \rd C)$.
\item $x\:\rd \:\ex y B(y)\equiv \pi_2(x)\: \rd B(\pi_1(x))$.
\item $x\:\rd \:\all \z( B(\z)\ra C(\z)) \equiv
\B(x)\land \all y\z\,(y\:\rd B(\z)\ra\varphi_{\pi_1(x)}(y,\z)\:\rd C(\z))
\land\all\z\,(B(\z)\ra C(\z))$.
\end{itemize}

For a sequent $A\Ra B$, $x\:\rd( A \Ra B)\equiv \B(x) \land \all y\z\,(y\:\rd
A\ra \varphi_{\pi_1(x)}(y,\z)\:\rd B)\land ( A \ra B)$, where
$\z=(z_1,\dots,z_n)$ is the sequence of all free
variables in $ A \Ra B$ in the order of appearance.
\end{definition}
The following theorem shows that $\BAw$ is not sound with respect to the
$D$-bounded recursive realizability for any primitive recursive function $D$.
Then we may conclude that this type of realizabiliy is not suitable for
$\BA$ by Lemma \ref{baw-vs-ba} and the fact that for every sequent $A\Ra B$,
$\nn\models\ex n(n\:\rd(A\Ra B))$ iff
$\nn\models\ex n(n\:\rd(\top\Ra\all\x(A\ra B)))$.

\begin{theorem}\label{3dcr555}
For every primitive recursive function $D:\nn^2\to\nn$, there
exists a sequent $A\Ra B$ such that $\BA\vdash A\Ra B$, but
$\nn\not\models\ex n(n\:\rd (A\Ra B))$.
\end{theorem}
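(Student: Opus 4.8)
The plan is to exploit the single clause $\B$ in Definition \ref{3dddddddddd}: a $D$-realizer of an implication is pinned to one \emph{slice} $D(\cdot,c)$ of $D$ (namely $c=\pi_2$ of the realizer), whereas realizing an instance of the induction axiom forces one to \emph{iterate} such a slice, and the iterate outgrows every fixed slice. I would therefore take the witnessing sequent to be an instance of the induction axiom (axiom 7 of $\BA$), so that the provability side $\BA\vdash A\Ra B$ is immediate and all the work falls on refuting realizability. Fix a primitive recursive $D$ and the pairing $\langle\cdot,\cdot\rangle$. I would first pass to a monotone majorant $D'(u,m)=\max_{u'\le u}D(u',m)$: since $D\le D'$, the clause $\B$ for $D$ implies that for $D'$, hence $x\:\rd^{D}A$ implies $x\:\rd^{D'}A$; thus it suffices to refute $D'$-realizability, and non-$D'$-realizability gives non-$D$-realizability by contraposition. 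This removes any monotonicity worry in what follows, so I write $D$ for $D'$.

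Next, choose an index $c_0$ and define a primitive recursive $W$ by $W(0)=1$ and $W(Sx)=D(\langle W(x),x\rangle,c_0)$, and fix a formula $E(x,w)$ with $\nn\models E(x,w)\lr w\ge W(x)$ (a standard graph of the predicate $w\ge W(x)$). Put $A(x)\equiv\ex w\,E(x,w)$ and take the sequent $\all x(A\ra A[x/Sx])\Ra\all x(A[x/0]\ra A)$. Its antecedent is $D$-realizable: any realizer of $A(x)$ is a number $r\ge\pi_1(r)\ge W(x)$ encoding a true witness $\ge W(x)$, and producing from $r$ the witness $W(Sx)=D(\langle W(x),x\rangle,c_0)\le D(\langle r,x\rangle,c_0)$ for $A(Sx)$ is a single application of the $c_0$-slice, so the transforming function meets the bound in $\B$ with $\pi_2=c_0$; moreover $A(0)$ is realized by the constant witness $W(0)=1$.

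To refute realizability of the conclusion, suppose $n\:\rd(A\Ra B)$ and let $e$ realize the antecedent. Then $\varphi_{\pi_1(n)}(e)$ realizes $\all x(A(0)\ra A(x))$; writing $y_0$ for a fixed realizer of $A(0)$ and $c''$ for the relevant value of $\pi_2$, the clause $\B$ gives that for every $x$ the realizer of $A(x)$ is $\le D(\langle y_0,x\rangle,c'')$. Since realizers force the truth of what they realize (by the last conjuncts of Definition \ref{3dddddddddd}), the encoded witness $w$ satisfies $E(x,w)$, so $W(x)\le w\le D(\langle y_0,x\rangle,c'')$ for all $x$. Thus the iterate $W$ would be bounded by a single slice $D(\cdot,c'')$ evaluated at the polynomial argument $\langle y_0,x\rangle$ --- exactly what the construction of $W$ is designed to forbid; this contradiction yields $\nn\not\models\ex n(n\:\rd(A\Ra B))$.

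The hard part is the domination lemma underlying the last step: for an appropriate $c_0$, the $x$-fold iterate $W(x)=D(\langle W(x-1),x-1\rangle,c_0)$ must eventually exceed $D(\langle a,x\rangle,c)$ for \emph{every} fixed $a,c$. The leverage is that $D$, being primitive recursive, is majorized by a single fixed level of a fast-growing (Ackermann-type) hierarchy, so all of its slices lie below that level while iterating one sufficiently fast, unbounded slice climbs one level higher and overtakes them all; the quadratic growth of $\langle\cdot,\cdot\rangle$ helps the iterate gain a level. I expect two delicate points. First, the selection of $c_0$, together with the degenerate case in which \emph{every} slice of $D$ is bounded: there one abandons iteration and uses instead the one-line sequent $\top\Ra\ex w\,(w=Sx)$, whose unbounded term-witness $Sx$ already defeats the bounded slices. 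Second, verifying uniformly in $D$ that the antecedent stays realizable exactly when the consequent does not. Finally, if one prefers the induction-rule-free formulation, the two reduction facts stated above recast the same example over $\BA^w$ and transfer the failure back to $\BA$.
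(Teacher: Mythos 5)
The decisive step of your argument --- the ``domination lemma'' asserting that for a suitable $c_0$ the iterate $W(x)=D(\langle W(x-1),x-1\rangle,c_0)$ eventually exceeds every slice $D(\langle a,\cdot\rangle,c)$ --- is false for general primitive recursive $D$, and your case split (all slices bounded versus a good $c_0$ exists) is not exhaustive. Take $D(u,m)=\lfloor\log_2\log_2(u+4)\rfloor+m$: it is primitive recursive, monotone in $u$, and every slice is unbounded, so your degenerate case does not apply. But iteration does not speed up a slow function; from $\langle a,b\rangle\leq(a+b+1)^2$ one gets $W(x+1)\leq\log_2\log_2(W(x)+x+3)+c_0+1$, and this recursion is self-collapsing, so $W(x)\leq\log_2\log_2(x+4)+c_0+C$ for an absolute constant $C$ and all large $x$, whereas $D(\langle a,x\rangle,c)\geq\lfloor\log_2\log_2(x+4)\rfloor+c$. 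Hence the single slice with index $c=c_0+C+2$ dominates $W$ for every choice of $c_0$: the ``climb one level in the hierarchy'' heuristic only works when the iterated slice already grows at the top rate of $D$, which nothing guarantees. Worse, for this $D$ your sequent is actually $D$-realizable, so it cannot witness the theorem: any realizer of $A(Sx)=\ex w\,E(Sx,w)$ has the form $\langle w,e\rangle\geq w^2/2\geq W(Sx)^2/2$ (whatever reasonable $E$ you pick), while the bound $\B$ only allows outputs of size about $\log_2\log_2 x+O(1)$ on the minimal realizers of $A(x)$; so the \emph{antecedent} $\all x(A(x)\ra A(Sx))$ has no realizers at all, and the full sequent is then realized \emph{vacuously} by an index of the constant zero function (Definition \ref{3dddddddddd} only requires $\B(n)$, the implication over actual realizers, and truth in $\nn$). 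This pairing-overhead phenomenon also undercuts your positive claim that the antecedent is realizable in the non-degenerate case.

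The paper avoids both problems by a different mechanism. Instead of trying to outgrow $D$ by iteration, it diagonalizes: $h(n,m)=\max_{i\leq n,j\leq m}D(i,j)+n+m+1$ strictly majorizes $D$ pointwise, and $g(n)=h(\langle 0,n\rangle,n)$ beats \emph{every} slice at the single point $b=\pi_2(u)$ --- no growth-rate hypotheses, no case analysis, uniform in $D$. And instead of an induction instance, it manufactures the provable sequent from Theorem \ref{3t11} plus well-formedness (Corollary \ref{bafaith}): since $g\in\PRb$ is provably recursive in $\BA+\U$ via an $\ex^+_1$ formula $A$, one gets $\BA\vdash\all xyz(x+z=y+z\ra x=y)\Ra\all x(\top\ra\ex y\,A(x,y))$, whose antecedent (the cancellation law, a quantified implication between \emph{atomic} formulas) is realized by an index of the constant zero function independently of $D$; the contradiction then only needs a \emph{lower} bound on realizers ($\pi_1(z)\leq z$ forces the output at $(0,b)$ to be at least $g(b)$), so pairing overhead helps rather than hurts. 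Note that your route cannot be repaired by simply substituting the paper's diagonal $g$ for your $W$ inside the induction axiom: then the induction step is no longer computable within any fixed slice, the antecedent again loses all realizers, and the sequent becomes vacuously realizable. This tension --- the induction step must be slice-bounded for the antecedent, yet its iterate must escape all slices for the conclusion --- is exactly why the paper imports the needed strength through $\U$ and faithfulness rather than through induction.
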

\begin{proof}
Let $h(n,m)$ be the following primitive recursive function:
$$h(n,m):=\max_{i\le n,j\le m}\{D(i,j)\}+n+m+1\text.$$
It is easy to see that for every $m,n\in\nn$, $D(n,m)<h(n,m)$.
By Theorem \ref{3t11} there is a $\Ex$ formula $A(x,y)$ that
defines the function $g(n)=h(\langle0,n\rangle,n)$ and also
$\BA+\U\vdash \top\Ra \ex y A(x,y)$, hence by Corollary \ref{bafaith}
$$\BA\vdash \all xyz(x+z=y+z\ra x=y)\Ra \all x(\top \ra \ex yA(x,y))\text.$$
Let the constant zero function be the $s$-th partial recursive function,
then we have $\nn\models t \:\rd \all xyz(x+z=y+z\ra x=y)$ for
$t=\langle s,0\rangle$.
Suppose there exists a natural number $n$ such that
$\nn\models n\:\rd(\all x,y,z(x+z=y+z\ra x=y)\Ra\all x(\top\ra\ex yA(x,y)))$.
Then, by definition of $D$-bounded recursive realizability, we have
\begin{enumerate}
\item $\nn\models \B(n)\land \all a(a\:\rd \all xyz(x+z=y+z\ra x=y)
\ra \varphi_{\pi_1(n)}(a)\:\rd \all x(\top \ra \ex yA(x,y)))$,
\item then $\nn\models u\:\rd \all x(\top \ra \ex yA(x,y))$ for
$u=\varphi_{\pi_1(n)}(t)$,
\item so $\nn\models \B(u)\land
\all ab(a\:\rd \top\ra \varphi_{\pi_1(u)}(a,b)\:\rd \ex yA(b,y))$,
\item hence $\nn\models \B(u)\land
\all b(\varphi_{\pi_1(u)}(0,b)\:\rd \ex yA(b,y))$.
\end{enumerate}
Note that by definition of the realizability, we have
$\nn\models \all b (\pi_2\varphi_{\pi_1(u)}(0,b)\:\rd
A(b,\pi_1\varphi_{\pi_1(u)}(0,b))))$, so
$\nn\models \all b A(b,\pi_1\varphi_{\pi_1(u)}(0,b))$ and hence
$g(b)=\pi_1\varphi_{\pi_1(u)}(0,b)$. This implies that
$g(b)\le \varphi_{\pi_1(u)}(0,b)$. Since $\B(u)$ is true, for all $b$ we have
$$g(b)\le \varphi_{\pi_1(u)}(0,b)\le
D(\langle0,b\rangle,\pi_2(u))<h(\langle0,b\rangle,\pi_2(u))$$
Let $b=\pi_2(u)$, then
$$g(\pi_2(u))=h(\langle0,\pi_2(u)\rangle,\pi_2(u))
<h(\langle0,\pi_2(u)\rangle,\pi_2(u))\text,$$
which leads to a contradiction. Hence our assumption is false.
\end{proof}

Our final remark in this paper is a proposal for axiomatization of $\BA$. 
It is a relatively standard tradition that one axiomatizes an arithmetical
theory by the usual Peano axioms. This tradition is applied, for instance,
to the intuitionistic arithmetical theory, well-known as Heyting arithmetic,
and also to the theory based on basic predicate logic, called basic arithmetic
\cite{R98}. We suggest the axiom $\U$ to be added to the list of axioms
and rules of $\BA$. Our motive is twofold. On one hand, as we have seen
in the previous sections, $\BA + \U$ is an arithmetical theory stronger than
$\BA$ with remarkable mathematical properties, and on the other hand, it is
still a constructive theory in the sense of \cite{R98}. By Lemma
\ref{U-equiv}, we can also add other equivalent axioms instead of $\U$.
One of them, $S(x+y)=y\Ra\bot$, can in fact replace the axiom $Sx=0\Ra\bot$
of $\BA$, instead of being added. Just note that $x+0=x$ is already an
axiom of $\BA$, and therefore $Sx=0\Ra\bot$ is equivalent to the instance
$S(x+0)=0\Ra\bot$ of the new axiom. The same remarks can be
made in relation with the theories $\GA$ and $\GA+\U$. While there seems to
be no theory widely referred to as ``geometric arithmetic" in the literature,
we suggest that considering the additional axiom $\U$ is a more suitable
choice than just taking the Peano axioms over geometric logic.

\section{Appendix}
\label{2d6.0}
\subsection*{The System $\LK$}
The system $\LK$ (see 1.2.2 and 2.3.2 of \cite{B98}) is a sequent calculus
formed by the following axioms and rules: (Here, $\Delta$, $\Delta'$
and $\Delta''$ are finite lists of formulas)
\subsubsection*{Structural Axiom}
\begin{enumerate}[]
\item
\AXC{}\LeftLabel{(Ax)}\UIC{$A\Ra A$}\DP
\end{enumerate}
\subsubsection*{Structural Rules:}
\begin{enumerate}[]
\item
\AXC{$\Delta,A,B,\Delta'\Ra\Delta''$}\LeftLabel{(Ex$\Ra$)}
\UIC{$\Delta,B,A,\Delta'\Ra\Delta''$}\DP
\item
\AXC{$\Delta\Ra\Delta',A,B,\Delta''$}\LeftLabel{($\Ra$Ex)}
\UIC{$\Delta\Ra\Delta',B,A,\Delta''$}\DP
\item
\AXC{$\Delta\Ra\Delta'$}\LeftLabel{(W$\Ra$)}
\UIC{$\Delta,A\Ra\Delta'$}\DP
\item
\AXC{$\Delta\Ra\Delta'$}\LeftLabel{($\Ra$W)}
\UIC{$\Delta\Ra A,\Delta'$}\DP
\item
\AXC{$\Delta,A,A\Ra\Delta'$}\LeftLabel{(C$\Ra$)}
\UIC{$\Delta,A\Ra\Delta'$}\DP
\item
\AXC{$\Delta\Ra A,A,\Delta'$}\LeftLabel{($\Ra$C)}
\UIC{$\Delta\Ra A,\Delta'$}\DP
\item
\AXC{$\Delta\Ra A,\Delta'$}\AXC{$\Delta,A\Ra\Delta'$}
\LeftLabel{(Cut)}\BIC{$\Delta\Ra\Delta'$}\DP
\end{enumerate}
\subsubsection*{Logical Axioms:}
\begin{enumerate}[]
\item
\AXC{}\LeftLabel{($\Ra\top$)}\UIC{$\quad\Ra\top$}\DP
\item
\AXC{}\LeftLabel{($\bot\Ra$)}\UIC{$\bot\Ra\quad$}\DP
\end{enumerate}
\subsubsection*{Logical Rules:}
\begin{enumerate}[]
\item
\AXC{$\Delta\Ra A,\Delta'$}\LeftLabel{($\neg\Ra$)}
\UIC{$\Delta,\neg A\Ra\Delta'$}\DP
\item
\AXC{$\Delta,A\Ra\Delta'$}\LeftLabel{($\Ra\neg$)}
\UIC{$\Delta\Ra\neg A,\Delta'$}\DP
\item
\AXC{$\Delta,A,B\Ra\Delta'$}\LeftLabel{($\land\Ra$)}
\UIC{$\Delta,A\land B\Ra\Delta'$}\DP
\item
\AXC{$\Delta\Ra A,\Delta'$}\AXC{$\Delta\Ra B,\Delta'$}
\LeftLabel{($\Ra\land$)}
\BIC{$\Delta\Ra A\land B,\Delta'$}\DP
\item
\AXC{$\Delta,A\Ra\Delta'$}\AXC{$\Delta,B\Ra\Delta'$}
\LeftLabel{($\lor\Ra$)}
\BIC{$\Delta,A\lor B\Ra\Delta'$}\DP
\item
\AXC{$\Delta\Ra A,B,\Delta'$}\LeftLabel{($\Ra\lor$)}
\UIC{$\Delta\Ra A\lor B,\Delta'$}\DP
\item
\AXC{$\Delta\Ra A,\Delta'$}\AXC{$\Delta,B\Ra\Delta'$}
\LeftLabel{($\ra\Ra$)}\BIC{$\Delta,A\ra B\Ra\Delta'$}\DP
\item
\AXC{$\Delta,A\Ra B,\Delta'$}\LeftLabel{($\Ra\ra$)}
\UIC{$\Delta\Ra A\ra B,\Delta'$}\DP
\item
\AXC{$\Delta,A[x/y]\Ra\Delta'$}\LeftLabel{($\ex\Ra$)}
\UIC{$\Delta,\ex x A\Ra\Delta'$}\DP,
where $y$ is substitutable for $x$ in $A$,
and $y$ is not free in $\Delta,\Delta'$
\item
\AXC{$\Delta\Ra A[x/t],\Delta'$}\LeftLabel{($\Ra\ex$)}
\UIC{$\Delta\Ra\ex x A,\Delta'$}\DP,
where $t$ is substitutable for $x$ in $A$
\item
\AXC{$\Delta,A[x/t]\Ra\Delta'$}\LeftLabel{($\all\Ra$)}
\UIC{$\Delta,\all x A\Ra\Delta'$}\DP,
where $t$ is substitutable for $x$ in $A$
\item
\AXC{$\Delta\Ra A[x/y],\Delta'$}\LeftLabel{($\Ra\all$)}
\UIC{$\Delta\Ra\all x A,\Delta'$}\DP,
where $y$ is substitutable for $x$ in $A$,
and $y$ is not free in $\Delta,\Delta'$
\end{enumerate}
In each rule, the formulas appearing in $\Delta$, $\Delta'$ or $\Delta''$,
are called \emph{context}. Other formulas which appear in the
upper sequents are called \emph{active formulas}, and those which
appear in the lower sequents are called \emph{principal formulas}.
The active formula of the \emph {cut rule} (Cut) is called the
\emph{cut formula}.
\subsection*{Arithmetical Theories Based on $\LK$}
Arithmetical theories over $\LK$ are formalized in the language of
arithmetic, with the additional axioms of logic with equality
(see 2.3.3 of \cite{B98}), and axioms and rules specific to arithmetic
(see 2.4.6 of \cite{B98}):
\subsubsection*{Equality Axioms:}
\begin{enumerate}[]
\item
\AXC{}\LeftLabel{(=-ref)}\UIC{$\quad\Ra s=s$}\DP
\item
\AXC{}\LeftLabel{(=-eqv)}\UIC{$s=t,s'=t',s=s'\Ra t=t'$}\DP
\item
\AXC{}\LeftLabel{(S-fnc)}\UIC{$s=t\Ra Ss=St$}\DP
\item
\AXC{}\LeftLabel{(+-fnc)}\UIC{$s=t,s'=t'\Ra s+t=s'+t'$}\DP
\item
\AXC{}\LeftLabel{($\cdot$-fnc)}\UIC{$s=t,s'=t'\Ra s\cdot t=s'\cdot t'$}\DP
\item
\AXC{}\LeftLabel{($<$-rel)}\UIC{$s=t,s'=t',s<s'\Ra t<t'$}\DP
\end{enumerate}
\subsubsection*{Arithmetical Axioms:}
\begin{enumerate}[]
\item
\AXC{}\LeftLabel{(S-pos)}\UIC{$Ss=0\Ra\quad$}\DP
\item
\AXC{}\LeftLabel{(S-inj)}\UIC{$Ss=St\Ra s=t$}\DP
\item
\AXC{}\LeftLabel{(+-0)}\UIC{$\quad\Ra s+0=s$}\DP
\item
\AXC{}\LeftLabel{(+-S)}\UIC{$\quad\Ra s+St=S(s+t)$}\DP
\item
\AXC{}\LeftLabel{($\cdot$-0)}\UIC{$\quad\Ra s\cdot 0=0$}\DP
\item
\AXC{}\LeftLabel{($\cdot$-S)}\UIC{$\quad\Ra s\cdot St=s\cdot t+s$}\DP
\end{enumerate}
\subsubsection*{Arithmetical Rule:}
\begin{enumerate}[]
\item
\AXC{$\Delta,A\Ra A[x/Sx],\Delta'$}\LeftLabel{(ind)}
\UIC{$\Delta,A[x/0]\Ra A[x/t],\Delta'$}\DP,
where $t$ is substitutable for $x$ in $A$,
and $x$ is not free in $\Delta,\Delta'$
\end{enumerate}
The formula $A$ in the \emph{induction rule} (ind) is called the
\emph{induction formula}. If the induction formula is restricted to a class
$\C$ of formulas, the rule is called the \emph{$\C$-induction rule}.

$ $


\end{document}